\newtheorem{theorem}{Theorem}[section]
\newtheorem{lemma}[theorem]{Lemma}
\newtheorem{corollary}[theorem]{Corollary}
\newtheorem{proposition}[theorem]{Proposition}
\newtheorem{definition}[theorem]{Definition}
\tikzset{negated/.style={
        decoration={markings,
            mark= at position 0.5 with {
                \node[transform shape] (tempnode) {${\scriptstyle\setminus} $};
            }
        },
        postaction={decorate}
    }
}
\tikzset{degil/.style={
            decoration={markings,
            mark= at position 0.5 with {
                  \node[transform shape] (tempnode) {$\backslash$};
                  }
              },
              postaction={decorate}
}
}
\title[A switched server system]{A switched server system   semiconjugate \\[0.05in] to a minimal Interval exchange }
\subjclass[2000]{Primary 37E05 Secondary 37B10, 37N35}
\keywords{Piecewise contraction, symbolic dynamics, switched server system, pseudo billiard}
\begin{document}

\maketitle

\centerline{\scshape Filipe Fernandes}

{\footnotesize
 \centerline{Departamento de Matem\'atica, Universidade Federal de S\~ao Carlos}
 \centerline {13565-905, S\~ao Carlos - SP, Brazil}
   \centerline{filipefernandes@dm.ufscar.br }
   
   \medskip

\centerline{\scshape\normalsize Benito Pires}

{\footnotesize
 \centerline{Departamento de Computa\c c\~ao e Matem\'atica, Faculdade de Filosofia, Ci\^encias e Letras}
 \centerline {Universidade de S\~ao Paulo, 14040-901, Ribeir\~ao Preto - SP, Brazil}
   \centerline{benito@usp.br} }
   
   \begin{abstract} Switched server systems are mathematical models of manufacturing, traffic and queueing systems that have being studied since the early 1990s. In particular, it is known that typically the dynamics of such systems is asymptotically periodic: each orbit of the system converges to one of its finitely many limit cycles. In this article, we provide an explicit example of a switched server system with exotic behavior: each orbit of the system converges to the same Cantor attractor. To accomplish this goal, we bring together recent advances in the understanding of the topological dynamics of piecewise contractions and interval exchange transformations with flips. The ultimate result is a switched server system whose Poincar\'e map is semiconjugate to a minimal and uniquely ergodic interval exchange transformation with flips.
\end{abstract}

\section{Introduction}

Certain aspects of manufacturing, traffic or queueing systems are captured by the mathematical model named \textit{switched server system}, which was introduced by Chase, Serrano and Ramadge in \cite[Section II.B, p. 72]{CSR1993}. It is a  continuous-time system discretely controlled via a switched state-feedback, also referred to as a hybrid dynamical system  (see \cite{AA2001}). It can also be considered a   \textit{pseudo-billiard} (see \cite{BB2004}).  In this article, we provide an example of a switched server system with atypical non-trivial dynamics. Our approach benefits from recent advances in the understanding of the topological dynamics of piecewise contractions (see \cite{BP2018}).       

The switched server system we consider here  consists of $3$ buffers (tanks) numbered $1$, $2$, $3$, and a server. It is very convenient to think of each buffer $i$ as a tank partially filled in with a fluid (work). At each time $t\ge 0$, a fluid is delivered to each tank $i$ at the constant rate $\rho_i=\frac13$ ($i=1,2,3$) and is removed from a selected tank $i\in\{1,2,3\}$ by the server  at the constant rate $\rho=1$. The volume of fluid in the tank $i$ at the time $t$ is denoted by $v_i(t)$. When the tank $i$ is emptied by the server at  the time $t$, the server changes its location to the tank $j\neq i$ with the largest scaled volume $d_{ij} v_j(t)$, where $\{d_{ij}:1\le i,j\le 3,i\neq j\}$ are the parameters of the system. We assume that $\sum_{i=1}^3 v_i(0)=1$. Since the system is closed ($\rho_1+\rho_2+\rho_3=\rho$), we have that  $\sum_{i=1}^3 v_i(t)=1$ for every $t\ge 0$. Hence, the state
$\mathbf{v}(t)=(v_1(t),v_2(t),v_3(t))$ of the system at the time $t$ is a probability vector and the phase space is the set $\Delta=\{\mathbf{v}=(v_1,v_2,v_3): v_i\ge 0,\forall i\,\,\textrm{and}\,\, v_1+v_2+v_3=1\}$. Let $l(t)$ denote the position of the server at the time $t$. We assume that $t\mapsto l(t)$ is right-continuous.
Figure \ref{thesss}.(a) shows a switched server system with the server located at  the position $l=1$.  

The trajectory $t\in [0,\infty)\mapsto \mathbf{v}(t)\in\Delta$ describes the position of a particle that moves with constant velocity inside $\Delta$ and changes its velocity when the particle hits the boundary $\partial\Delta$ according to a non-specular reflection. Hence, the system is a
\textit{pseudo-billiard} (see \cite{BB2004}). The times  $0\le t_1<t_2<t_3\ldots$ at which any of the tanks is empty are called the {\it switching times}. At the initial time $t=0$, the server is supposed to be connected to a non-empty tank.
Notice that $\mathbf{v}(t)\in \partial \Delta$ (the boundary of the phase space) if and only if $t\in \{t_1,t_2,\ldots\}$ (i.e. if $t$ is a switching time). In other words, at the switching times, the pseudo billiard trajectory hits the boundary $\partial\Delta$. By sampling the system at the switching times, we obtain a map $F:\partial\Delta\to\partial\Delta$ called the \textit{Poincar\'e map induced by the switched server system} (see Figure \ref{thesss}.(b)). The \textit{frequency} with which the server is connected to the tank $i$ is defined by
$$ \textit{freq}\,(i)=\lim_{n\to\infty} \frac{1}{n}\#{\{1\le k\le n: l(t_k)=i\}}.
$$

The dynamics of a switched server system with parameters $\{d_{ij}>0:1\le i,j\le 3, i\neq j\}$ depends only on the proportionality between pairs of parameters. More specifically, switched server systems sharing the same ratios 
$d_{13}/d_{12}$, $d_{21}/d_{23}$ and $d_{32}/d_{31}$ have the same dynamics. In this way,
we assume that if $(d_1,d_2,d_3)$ is a vector with positive entries, then the system parameters $d_{ij}$ are chosen according to the following conditions
 \begin{equation}\label{dij}
 \dfrac{d_{13}}{d_{12}}=d_1,\qquad \dfrac{d_{21}}{d_{23}}=d_2,\qquad \dfrac{d_{32}}{d_{31}}=d_3.
 \end{equation}
 By \cite[Theorem 1.4]{NPR2018}, we have that for Lebesgue almost every vector $(d_1,d_2,d_3)$ with positive entries, any switched server system with parameters $d_{ij}$ satisfying $(\ref{dij})$ is structurally stable and admits finitely many limit cycles that attract all the orbits. The same  result was obtained in  \cite[Theorem 4.1]{CSR1993} under the additional restrictions: $d_{21}=d_{31}$, $d_{12}=d_{32}$ and $d_{13}=d_{23}$. Figure \ref{thesss}.(b) shows the case in which $d_1=d_2=d_3=1$ and $d_{ij}=1$ for all $i\neq j$. In this case, $\left\{\left(0,\frac23,\frac13\right),\left(\frac13,0,\frac23\right),\left(\frac23,\frac13,0\right)\right\}$ is a limit cycle of the system.
 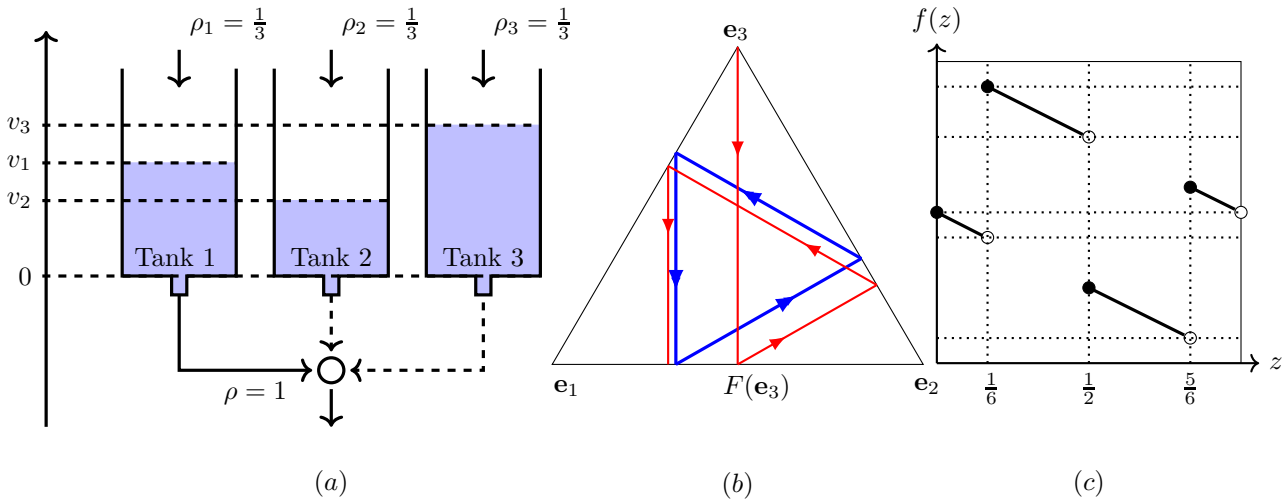
\begin{figure}[!htb]\vspace{-0.6cm}
   \begin{minipage}{0.3\textwidth}
      \centering
       \vspace*{1cm}
     \begin{tikzpicture}[scale=1]
\begin{scope}[shift={(0,0)}, scale=0.5]
    \draw[very thick] (0,6)--(0,0.5)--(1.3,0.5)--(1.3,0)--(1.7,0)--(1.7,0.5)--(3,0.5)--(3,6);
    \begin{pgfonlayer}{background}
        \filldraw[blue!25] (0,3.5)--(3,3.5)--(3,0.5)--(1.7,0.5)--(1.7,0)--(1.3,0)--(1.3,0.5)--(0,0.5)--cycle;
     \end{pgfonlayer}
      \draw[very thick,->] (1.5,6.5)--(1.5,5.5)node[pos=0,anchor=south west]{$\rho_1=\frac13$};
        \node at (1.4,1){Tank 1};
\draw[very thick,->] (1.5,0)--(1.5,-2)--(5,-2);
    \draw[very thick] (4,6)--(4,0.5)--(5.3,0.5)--(5.3,0)--(5.7,0)--(5.7,0.5)--(6,0.5)--(7,0.5)--(7,6);
    \begin{pgfonlayer}{background}
    \filldraw[blue!25] (4,2.5)--(4,0.5)--(5.3,0.5)--(5.3,0)--(5.7,0)--(5.7,0.5)--(6,0.5)--(7,0.5)--(7,2.5)--cycle;
     \end{pgfonlayer}
      \draw[very thick,->] (5.5,6.5)--(5.5,5.5)node[pos=0,anchor=south west]{$\rho_2=\frac13$};
      \node at (5.5,1){Tank 2};
       \draw[very thick,dashed,->] (5.5,0)--(5.5,-0.5)--(5.5,-1.5)node[pos=0,anchor=south west]{};
    \draw[very thick] (8,6)--(8,0.5)--(9.3,0.5)--(9.3,0)--(9.7,0)--(9.7,0.5)--(11,0.5)--(11,6);
    \begin{pgfonlayer}{background}
    \filldraw[blue!25] (8,4.5)--(8,0.5)--(9.3,0.5)--(9.3,0)--(9.7,0)--(9.7,0.5)--(11,0.5)--(11,4.5)--cycle;
     \end{pgfonlayer}
      \draw[very thick,->] (9.5,6.5)--(9.5,5.5)node[pos=0,anchor=south west]{$\rho_3=\frac13$};
      \node at (9.5,1){Tank 3};
       \draw[very thick,dashed,->] (9.5,-0.2)--(9.5,-2)--(6,-2)node[pos=0,anchor=south west]{};
       \draw[very thick,->] (5.5,-2.5)--(5.5,-3.5) node[xshift=-1cm, yshift=0.45cm]{$\rho=1$};
      
      
     \node [draw,circle, minimum width=0.2 cm,very thick](B) at (5.5,-2){}; 
     \draw[very thick, ->](-2,-3.5)--(-2,7);
      \draw[very thick,dashed](-2.1,3.5)--(3,3.5) node[pos=0,left]{$v_1$};
      \draw[very thick,dashed](-2.1,2.5)--(7,2.5) node[pos=0,left]{$v_2$};
       \draw[very thick,dashed](-2.1,4.5)--(11,4.5) node[pos=0,left]{$v_3$};
        \draw[very thick,dashed](-2.1,0.5)--(11,0.5) node[pos=0,left]{$0$};
        \draw[very thick,dashed] node at (5.5,-5) {$(a)$};

\end{scope}
\end{tikzpicture}
   \end{minipage}\hfill
   \begin {minipage}[m]{0.27\textwidth}
     \centering
      \vspace*{1.25cm}

\begin{tikzpicture}[tdplot_main_coords, scale=3.45]
\begin{scope}[shift={(0,1)}]
   \draw [,fill opacity=0.5]
          (1,0,0) -- (0,1,0) -- (0,0,1) -- cycle;
   \draw (1,0.2) node[left] {$\mathbf{e}_1$};
   \draw (0.1,1,-0.05) node[right] {$\mathbf{e}_2$};
   \draw (0.03,-0.1,1.03) node[right] {$\mathbf{e}_3$};
   \draw (0.03,-0.1,-0.75) node[right] {$F(\mathbf{e}_3)$};
 
\begin{scope}[blue, very thick,decoration={
    markings,
    mark=at position 0.65 with {\arrow[blue,scale=1.2]{latex}}}
    ] 
\draw[postaction={decorate}] ( 0, 0.666666666666667, 0.333333333333333 ) -- ( 0.333333333333333, 0, 0.666666666666667 ); 
\draw[postaction={decorate}] ( 0.333333333333333, 0, 0.666666666666667 ) -- ( 0.666666666666667, 0.333333333333333, 0 ); 
\draw[postaction={decorate}] ( 0.666666666666667, 0.333333333333333, 0 ) -- ( 0, 0.666666666666667, 0.333333333333333 ); 
 \end{scope}
\begin{scope}[blue, thin,decoration={
    markings,
    mark=at position 0.65 with {\arrow[blue,scale=1.2]{latex}}}
    ] 

\draw[postaction={decorate}] ( 0.333333333333333, 0, 0.666666666666667 ) -- ( 0.666666666666667, 0.333333333333333, 0 ); 
\draw[postaction={decorate}] ( 0.666666666666667, 0.333333333333333, 0 ) -- ( 0, 0.666666666666667, 0.333333333333333 ); 
\draw[postaction={decorate}] ( 0, 0.666666666666667, 0.333333333333333 ) -- ( 0.333333333333333, 0, 0.666666666666667 ); 
\end{scope}
\begin{scope}[red ,thick, decoration={
    markings,
    mark=at position 0.35 with {\arrow[red,scale=1.2]{latex}}}
    ] 

\draw[postaction={decorate}] ( 0, 0, 1 ) -- ( 0.5, 0.5, 0 ); 
\draw[postaction={decorate}] ( 0.5, 0.5, 0 ) -- ( 0, 0.75, 0.25 ); 
\draw[postaction={decorate}] ( 0, 0.75, 0.25 ) -- ( 0.375, 0, 0.625 ); 
\draw[postaction={decorate}] ( 0.375, 0, 0.625 ) -- ( 0.6875, 0.3125, 0 ); 
\end{scope}
\draw[very thick,dashed] node at (0,0,-1.2) {$(b)$};

    ] 

    
 \end{scope}  
 \end{tikzpicture}
   \end{minipage}
    \begin{minipage}{0.3\textwidth}
     \centering
     \vspace*{1cm}
\begin{tikzpicture}[scale=0.8]



\draw [  thick, ->] (0,0) -- (5.3,0) node [right] {\footnotesize $z$};
\draw [  thick, ->] (0,0) -- (0,5.3) node [above] {\footnotesize $f(z)$};
	
			

\draw (0,0)--(5,0)--(5,5)--(0,5);
			
\draw[fill=black] (0,5/2) circle (0.1);
\draw[fill=white] (5/6, 25/12) circle (0.1);
\draw[very thick] (0,5/2)--(5/6-0.08, 25/12+0.03) ;

\draw[fill=black] (5/6,55/12) circle (0.1);
\draw[fill=white] (5/2, 15/4) circle (0.1);
\draw[very thick] (5/6,55/12)--(5/2-0.1, 15/4+0.04) ;

\draw[fill=black] (5/2,5/4) circle (0.1);
\draw[fill=white] (25/6, 5/12) circle (0.1);
\draw[very thick] (5/2,5/4)--(25/6-0.1, 5/12+0.05) ;

\draw[fill=black] (25/6,35/12) circle (0.1);
\draw[fill=white] (5, 5/2) circle (0.1);
\draw[very thick] (25/6,35/12)--(5-0.1, 5/2+0.05) ;

\draw [  thick, dotted] (5/6,0)--(5/6,5);
\draw [  thick, dotted] (5/2,0)--(5/2,5);
\draw [  thick, dotted] (25/6,0)--(25/6,5);
\draw [  thick, dotted] (0,5/2)--(5,5/2);
\draw [  thick, dotted] (0,5/12)--(5,5/12);
\draw [  thick, dotted] (0,15/4)--(5,15/4);
\draw [  thick, dotted] (0,25/12)--(5,25/12);
\draw [  thick, dotted] (0,55/12)--(5,55/12);

\draw (0.6,0) node[below right] {$\frac16$};
\draw (5/2,0) node[below] {$\frac12$};
\draw (25/6,0) node[below] {$\frac56$};
\draw[very thick,dashed] node at (2.5,-2) {$(c)$};
 
\end{tikzpicture}
   \end{minipage}\hfill
   \caption{The switched server system, the pseudo billiard and the Poincar\'e map}\label{thesss}
\end{figure}

 In this article, we are interested in constructing switched server systems with complex dynamics, i.e., with no periodic orbit and therefore with no limit cycle. In the light of  what was discussed in the previous paragraph, it necessary to search for the appropriate parameters in a  Lebesgue negligible set of parameters  $(d_1,d_2,d_3)$. Moreover, the example we provide  presents stochastic regularity in the sense that it is possible to compute the frequency with which the server is connected to the tank $i$ at the switching times.

The strategy we use to tackle the problem is the following. The dynamics of a switched server system is completely determined by the Poincar\'e map $F:\partial\Delta\to\partial\Delta$ induced by the system on the boundary $\partial\Delta$ of the phase space. The Poincar\'e map $F$ is topologically conjugate to the piecewise smooth interval map
$f:[0,1]\to [0,1]$ defined by
$f=\varphi^{-1}\circ F\circ \varphi$, where $\varphi:[0,1]\to\partial\Delta$ denotes the  anticlockwise arc-length parametrization of $\partial\Delta$ with $\varphi(0)=\mathbf{e}_2=(0,1,0)$. Conversely, the following lemma is provided in this article:
\begin{lemma}\label{lmint}
Given $d_1,d_2,d_3>0$, let $f_{d_1,d_2,d_3}:[0,1]\to [0,1]$ be the map defined by
\begin{equation}\label{fd1d2d3}
f_{d_1,d_2,d_3}(z)=
\begin{cases} -\dfrac12 z + \dfrac12 & \textrm{if} \,\, z\in [z_0,z_1)\\[0.5em]
-\dfrac12 z + 1 & \textrm{if} \,\, z\in [z_1,z_2)\\[0.5em]
-\dfrac12 z + \dfrac12 & \textrm{if} \,\, z\in [z_2,z_3) \\[0.5em]
-\dfrac12 z + 1 & \textrm{if} \,\, z\in [z_3,z_4] \\[0.5em]
\end{cases},
\end{equation}
where
\begin{equation}\label{y123} 
z_0=0,\quad 
z_1=\dfrac{1}{3(1+d_1)},\quad z_2=\dfrac{1}{3(1+d_2)}+\dfrac{1}{3},\quad
z_3=\dfrac{1}{3(1+d_3)}+\dfrac{2}{3}, \quad z_4=1.
\end{equation}
Then the Poincar\'e map $F:\partial\Delta\to\partial\Delta$ of
 any switched server system with parameters $d_{ij}$ satisfying $(\ref{dij})$ is topologically conjugate to
  $f_{d_1,d_2,d_3}$. 
\end{lemma}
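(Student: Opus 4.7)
The plan is to compute the Poincar\'e map $F$ directly in the ambient coordinates $(v_1,v_2,v_3)$ and then translate the result to $[0,1]$ via $\varphi$. First make $\varphi$ explicit: since $\partial\Delta$ consists of three segments of equal length, $\varphi$ sends the intervals $[0,1/3]$, $[1/3,2/3]$, $[2/3,1]$ affinely onto the edges $\{v_1=0\}$, $\{v_2=0\}$, $\{v_3=0\}$ respectively, with vertex breakpoints $\varphi(0)=\mathbf{e}_2$, $\varphi(1/3)=\mathbf{e}_3$, $\varphi(2/3)=\mathbf{e}_1$, going anticlockwise. In particular, on $[0,1/3]$ one has $\varphi(z)=(0,1-3z,3z)$, and similar affine formulas on the other two edges.

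Now analyze $F$ one edge at a time. For $z\in[0,1/3]$, $\varphi(z)=(0,1-3z,3z)$ is the state just after tank $1$ has been emptied, so the server's next destination is the tank $j\in\{2,3\}$ maximizing $d_{1j}v_j$. Solving $d_{12}(1-3z)=d_{13}\cdot 3z$ and invoking $(\ref{dij})$ produces the threshold $z=\frac{d_{12}}{3(d_{12}+d_{13})}=\frac{1}{3(1+d_1)}=z_1$, with destination $j=2$ for $z<z_1$ and $j=3$ for $z>z_1$. The subsequent continuous flow is fully explicit: while $j$ is served, $v_j$ decreases at net rate $\rho-\rho_j=2/3$, so the time to the next switching is $\tau=\tfrac{3}{2}v_j$, during which each idle tank gains $\tfrac{1}{2}v_j$. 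Plugging this flow into $(0,1-3z,3z)$ and applying $\varphi^{-1}$ to the endpoint yields $f(z)=\tfrac12-\tfrac{z}{2}$ for $z<z_1$ and $f(z)=1-\tfrac{z}{2}$ for $z>z_1$, which match the first two branches of $(\ref{fd1d2d3})$. The edges $[1/3,2/3]$ and $[2/3,1]$ are handled by identical calculations: the comparison of scaled volumes $d_{2j}v_j$ and $d_{3j}v_j$ reduces, via $(\ref{dij})$, to the clean thresholds $z_2$ and $z_3$, and each of the corresponding two sub-cases contributes a piece of slope $-\tfrac12$ whose affine constant is $\tfrac12$ or $1$ according to the image edge. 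Merging the six elementary branches along consecutive intervals with matching affine parts recovers exactly the four-branch formula $(\ref{fd1d2d3})$.

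The whole argument is a straightforward case analysis, so the only real obstacle is bookkeeping. One must respect the right-continuity convention for $l(t)$ so that the half-open intervals $[z_{k-1},z_k)$ correspond to the correct switching decision at each breakpoint, and check that the seemingly asymmetric inequalities \textquotedblleft largest $d_{ij}v_j$\textquotedblright\ do simplify, under $(\ref{dij})$, to the clean parameter-separated expressions $z_1,z_2,z_3$ displayed in $(\ref{y123})$. Once $f=\varphi^{-1}\circ F\circ\varphi$ is verified branch by branch, topological conjugacy is automatic because $\varphi$ is a homeomorphism onto $\partial\Delta$.
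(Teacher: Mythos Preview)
Your proposal is correct and follows essentially the same route as the paper: compute the continuous flow between switchings explicitly (time $\tau=\tfrac32 v_j$, idle tanks each gaining $\tfrac12 v_j$), locate the threshold points $\mathbf{r}_1,\mathbf{r}_2,\mathbf{r}_3$ on the three edges by comparing scaled volumes, and conjugate by the arc-length parametrization $\varphi$. The only organizational difference is that the paper first writes $F$ as a single piecewise formula on $\partial\Delta$ and then composes with $\varphi^{-1}$, whereas you conjugate edge-by-edge; both yield the same six elementary branches that merge pairwise into the four branches of $(\ref{fd1d2d3})$.
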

In Figure \ref{thesss}.(c), the map $f=f_{1,1,1}$ is plotted considering $d_1=d_2=d_3=1$. In general,  for any  $d_1,d_2,d_3>0$, the map $f_{d_1,d_2,d_3}$ is a piecewise $\lambda$-affine contraction, where $\lambda=\frac12$ (see \cite{NPR2018}). We say that an infinite word $i_0i_1\ldots$ over the alphabet $\mathcal{A}=\{1,2,3,4\}$ is a \textit{symbolic itinerary} or \textit{natural coding} of $f=f_{d_1,d_2,d_3}$ if there exists $z\in [0,1]$ such that, for each $k\ge 0$,
$$f^k(z)\in\begin{cases} [z_{i_k-1},z_{i_k}) &\textrm{if} \quad i_k<4 \\ [z_3,z_4] & \textrm{if} \quad i_k=4
 \end{cases}.$$
 
  The problem we want to solve translates into the following question.\\

\noindent \textbf{(Q)} Does the family of piecewise contractions $\{f_{d_1,d_2,d_3}: d_1>0, d_2>0,d_3>0\}$ contains
a map having no ultimately periodic symbolic itinerary (and therefore no periodic orbit and no limit cycle) ? \\

On the one hand, as already mentioned, recent advances (see \cite{NPR2014,NPR2018}) in the understanding of the topological dynamics of piecewise contractions show that generically piecewise contractions have finitely many limit cycles that attracts all orbits. Hence, an affirmative answer to $(\textbf{Q})$ is very unlikely. On the other hand, as it was shown very recently (see \cite[Theorem 2.2]{BP2018}), there exist piecewise $\frac12$-affine contractions with only one gap having no periodic orbit and no ultimately periodic symbolic itinerary. In order to adapt the proof of \cite[Theorem 2.2]{BP2018} to our framework, it is necessary to find an isometric model for $f_{d_1,d_2,d_3}$, that is, a minimal and uniquely ergodic interval exchange transformation (IET) $T$ with $4$ flips and $3$ discontinuities $0<x_1<x_2<x_3$ satisfying
$T(x_2)<T(0)<T(x_3)<T(x_1)$ (see Section \ref{Softher}). This step is very hard to accomplish because Nogueira proved in \cite{AN1989} that generically IETs with flips are not minimal. Surprisingly, as we show in this article, (\textbf{Q}) has an affirmative answer.

The use of interval exchange transformations as isometric models of complex dynamics is quite standard. 
 Lots of piecewise smooth aperiodic interval maps are topologically semiconjugate to  IETs (see \cite{RCCG1997, MC2002, CG1986, BP2016, BP2018}). Moreover, IETs are the simplest discontinuous interval maps preserving Lebesgue measure (see \cite{MK1975}). 

\section{Statement of the results}\label{Softher}

Throughout this article, let $P$ and $Q$ be the integer matrices defined by
$$P=\left(\begin{array}{cccc}
3 & 3 & 5 & 4 \\
1 & 2 & 3 & 3 \\
1 & 1 & 2 & 1 \\
2 & 3 & 5 & 5
\end{array} \right),\quad
\quad 
Q=\left(\begin{array}{cccc}
1 & 0 & 0 & 0 \\
0 & 1 & 1 & 0 \\
0 & 0 & 1 & 1 \\
0 & 1 & 0 & 0
\end{array} \right).
$$\smallskip
Let $\boldsymbol{\nu}$ be the probability eigenvector with positive entries associated with the Perron-Frobenius eigenvalue $\eta$ of $P$. Let $\boldsymbol{\lambda}=(\lambda_1,\lambda_2,\lambda_3,\lambda_4)$ be the vector defined by 
$
\boldsymbol{\lambda}=Q\boldsymbol{\nu}
$
whose norm is $\vert\boldsymbol{\lambda}\vert=\lambda_1+\lambda_2+\lambda_3+\lambda_4>1$. 
Consider the partition of the interval $[0,\vert\boldsymbol{\lambda}\vert]$:
 $$ I_1=[0,\lambda_1),\,\, I_2=[\lambda_1,\lambda_1+\lambda_2), \,\, I_3=[\lambda_1+\lambda_2,\lambda_1+\lambda_2+\lambda_3), \,\, I_4=[\lambda_1+\lambda_2+\lambda_3,\lambda_1+\lambda_2+\lambda_3+\lambda_4].
$$
Let
 $T:[0,\vert\boldsymbol{\lambda}\vert]\to [0,\vert\boldsymbol{\lambda}\vert]$ be the map (called \textrm{isometric model}) defined by
\begin{equation}\label{formulaT}
T(x)=\begin{cases} -x +\lambda_1 + \lambda_3 & \textrm{if} \quad  x\in I_1\\
-x +\lambda_1 + |\boldsymbol{\lambda}| & \textrm{if} \quad x\in I_2 \\
-x+\lambda_1+\lambda_2 + \lambda_3 & \textrm{if}\quad x\in I_3 \\
-x + \lambda_1 + \lambda_3 +  |\boldsymbol{\lambda}|  & \textrm{if} \quad x\in I_4
\end{cases}.
\end{equation}
According to the definition given in \cite{AK1980}, we have that $T$ is a {\it $4$-interval exchange transformation with flips} ($4$-IET with flips). In fact, it can be easily verified that $T$ is one-to-one on $(0,\vert\boldsymbol{\lambda}\vert]$, $T\vert_{I_i}$ is an isometry $(i=1,2,3,4)$ and $T$ reverts the orientation of one (in fact, all) of the intervals $I_1,I_2,I_3,I_4$. We denote by $O_T(x)=\{x,T(x),T^2(x),\ldots\}$ the \textit{$T$-orbit of $x\in [0,\vert\boldsymbol{\lambda}\vert]$}. We say that $T$ is \textit{topologically transitive} if it has a dense orbit; \textit{minimal} if every $T$-orbit is dense; \textit{uniquely ergodic} if the (normalized) Lebesgue measure on $[0,\vert\boldsymbol{\lambda}\vert]$ is the only $T$-invariant Borel probability measure. 

Our first result is the following.

\begin{theorem}\label{thmT} The map $T$  defined in $(\ref{formulaT})$ is minimal and uniquely ergodic.
\end{theorem}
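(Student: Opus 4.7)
My plan is to prove Theorem \ref{thmT} by exhibiting a self-similar renormalization of $T$, in the spirit of Rauzy--Veech induction: the matrices $P$ and $Q$ are precisely the combinatorial data of such a renormalization. Concretely, I aim to show that there is a distinguished subinterval $J \subset [0, |\boldsymbol{\lambda}|]$ such that the first return map $T_J$ of $T$ to $J$, after the affine rescaling $x \mapsto \eta\,x$, is conjugate to $T$ itself.

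The first step is to identify $J$, of length $|\boldsymbol{\nu}|=1$, positioned so that the induced partition of $J$ consists of four sub-intervals of lengths $\nu_1,\nu_2,\nu_3,\nu_4$. One then computes the return times and first-return combinatorics directly from (\ref{formulaT}): the column $j$ of $P$ should record the itinerary (how many visits to each $I_i$ before returning to $J$) of a point in the $j$-th sub-interval of $J$, while $Q$ records which $I_i$ contains the $j$-th sub-interval of $J$ (the $(i,j)$-entry being $1$ iff the $j$-th sub-interval is contained in $I_i$). The identities $P\boldsymbol{\nu}=\eta\boldsymbol{\nu}$ and $\boldsymbol{\lambda}=Q\boldsymbol{\nu}$ then express, respectively, that the return itineraries fill $[0,|\boldsymbol{\lambda}|]$ consistently, and that the sub-interval lengths inside each $I_i$ sum to $\lambda_i$. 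A separate flip-compatibility check, tracking the parity of orientation reversals along each return itinerary, is needed to ensure that $T_J$ (after rescaling) has the same flip pattern as $T$.

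With self-similarity established, minimality follows from the primitivity of $P$: iterating the renormalization produces a nested family of $T$-invariant Rokhlin towers whose floors shrink at rate $\eta^{-n}$, and primitivity forces every $T$-orbit to meet every floor, hence to be dense. Unique ergodicity reduces to a Perron--Frobenius argument on the dual: any $T$-invariant Borel probability measure $\mu$ assigns a mass vector $\boldsymbol{m}=(\mu(I_1),\ldots,\mu(I_4))$ which, by invariance under all iterates of the renormalization, must satisfy $P^{\top}\boldsymbol{m}=\eta\boldsymbol{m}$; since $P$ is primitive, this pins $\boldsymbol{m}$, and hence $\mu$, to Lebesgue measure.

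The principal obstacle I foresee is the bookkeeping in the self-similarity step. Because IETs with flips are generically non-minimal by Nogueira's theorem, $T$ sits in a Lebesgue-negligible set in parameter space, and the renormalization data $(P,Q)$ must have been reverse-engineered from a target self-similar dynamics; verifying that the renormalization closes up with the exact combinatorics and exact flip pattern requires a hands-on orbit calculation rather than an appeal to general theory. Once that closure is in place, however, the Perron--Frobenius structure of $P$ should deliver both minimality and unique ergodicity cleanly.
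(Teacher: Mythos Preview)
Your high-level strategy---self-similarity plus primitivity of a positive incidence matrix yields topological transitivity, then Perron--Frobenius gives unique ergodicity---is exactly the route the paper takes. However, your identification of the roles of $P$ and $Q$ is off, and this causes a genuine gap. The map $T$ is \emph{not} self-similar: if you induce $T$ on the interval $J=[0,1]$ of length $|\boldsymbol{\nu}|=1$, the first-return map (call it $S$) is a $4$-IET whose flip pattern is $(-,+,+,-)$, not $(-,-,-,-)$, so the ``flip-compatibility check'' you anticipate will fail. Moreover, your reading of $Q$ as ``which $I_i$ contains the $j$-th sub-interval'' cannot be right, since columns $2$ and $3$ of $Q$ each have two nonzero entries; and your reading of $P$ as the visit matrix for $T$ on $J$ would force $P\boldsymbol{\nu}=\boldsymbol{\lambda}$, whereas in fact $P\boldsymbol{\nu}=\eta\boldsymbol{\nu}$ with $\eta\approx 10.33$ and $|\boldsymbol{\lambda}|\approx 1.31$.

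What actually happens is a two-step renormalization. First, $Q$ is the incidence matrix of the induction $T\rightsquigarrow S$ on $[0,1]$ (the column itineraries are $1$, $24$, $23$, $3$, and $Q\boldsymbol{\nu}=\boldsymbol{\lambda}$ is the corresponding length identity). Second, $P$ is the incidence matrix of the induction $S\rightsquigarrow S'$ on $[0,1/\eta]$, and it is $S$, not $T$, that is self-similar, with $P\boldsymbol{\nu}=\eta\boldsymbol{\nu}$ as the length identity. The paper then lifts transitivity from $S$ to $T$ via the exhaustive $T$-tower decomposition over $[0,1]$. One further point: the tower argument you sketch gives topological transitivity, but transitivity alone does not give minimality for IETs with flips---you must separately exclude periodic orbits. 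The paper does this by observing that self-similarity of $S$ rules out saddle connections (no finite $T$-orbit can join two discontinuities), hence any periodic orbit would lie in an open band of periodic orbits, contradicting transitivity.
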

The example given in Theorem \ref{thmT} is rare. Typically, an $n$-IET with flips has an interval formed by periodic orbits and, therefore, is not minimal (see \cite{AN1989}). This situation is completely different in the case of IETs without flips, also called \textit{standard} IETs. The simplest example is the  rotation of the circle $R_{\alpha}:[0,1)\to [0,1)$ defined by  $R_{\alpha}(x)=x+\alpha \,(\textrm{mod}\,\, 1)$, where $0<\alpha<1$. It
 can be written as the standard $2$-IET $T_{\alpha}:[0,1]\to [0,1]$ defined by $T_{\alpha}(x)= x+1-\alpha$ if $x\in [0,\alpha)$ and  $T_{\alpha}(x)= x-\alpha$ if $x\in [\alpha,1]$. It is widely known that when $\alpha$ is irrational, $R_{\alpha}$ and $T_\alpha$ are minimal and uniquely ergodic. Concerning standard irreducible $n$-IETs with $n\ge 2$, Keane's conjecture,
  answered in the affirmative by many authors (see \cite{MB1985,SK1985,HM1982,MR1981,WV1982}), states that
  such maps are typically minimal and uniquely ergodic. 
  
 To state our main result, we need some more definitions. Let
$$p_1=0, \quad p_2=T(\lambda_1+\lambda_2), \quad  p_3=T(\lambda_1+\lambda_2+\lambda_3), \quad   p_4=\vert\boldsymbol{\lambda}\vert.
$$ 
For $i,j\in\{1,2,3,4\}$, let
\begin{equation}\label{kij}
{K}_{ij}=\{k\ge 0: T^k(p_j)\in I_i\}, \quad c_{ij}=\sum_{k\in{K}_{ij}} \frac{1}{2^k}.
\end{equation}
Let
$$M=
\begin{pmatrix} 
c_{11}-c_{14} & c_{12}-c_{14} & c_{13}-c_{14} \\
c_{21}-c_{24} & c_{22} -c_{24}& c_{23} - c_{24}\\
c_{31} - c_{34}& c_{32} - c_{34} & c_{33} - c_{34}  \\
\end{pmatrix}.
$$
Let $u_4=1-u_1-u_2-u_3>0$, where $u_1,u_2,u_3>0$ is the unique solution of the linear system
$$\begin{pmatrix} u_1 \\ u_2 \\ u_3
\end{pmatrix}=
\frac12 M \begin{pmatrix} 
-1 & -1 & -1\\
0 & 1 & 0\\
0 & 0 & 1 \\
\end{pmatrix}
\begin{pmatrix}
 u_1 \\ u_2 \\ u_3
\end{pmatrix}+\frac12M
\begin{pmatrix}
1 \\ 0 \\ 0
\end{pmatrix}+\frac12
\begin{pmatrix}
 c_{14} \\ c_{24}\\  c_{34}
\end{pmatrix}.
$$
Let $$z_1=u_1,\quad z_2=u_1+u_2,\quad z_3=u_1+u_2+u_3.$$

In what follows, we say that a map $f:[0,1]\to[0,1]$ is \textit{topologically semiconjugate} to the isometric model $T:[0,\vert\boldsymbol{\lambda}\vert]\to [0,\vert\boldsymbol{\lambda}\vert]$ if there exists a continuous, surjective, nondecreasing map $h:[0,1]\to [0,\vert\boldsymbol{\lambda}\vert]$ such that $h\circ f=T\circ h$.

Now we state our main result.

\begin{theorem}\label{omr} Let $d_1,d_2,d_3>0$ be defined by
$$d_1=\frac{1}{3z_1}-1\cong 0.213841, \quad d_2=\dfrac{2-3z_2}{3z_2-1}\cong 4.036935,\quad d_3=\dfrac{3-3z_3}{3z_3-2}\cong 1.428826.$$
Then for any switched server system with parameters  $d_{ij}$ satisfying $(\ref{dij})$ the following statements are true:
\begin{itemize}
\item [$(a)$] The switched server system has no periodic orbit; 
\item [$(b)$] The Poincar\'e map $F:\partial\Delta\to\partial\Delta$ of the system is topologically semiconjugate to $T$;
\item [$(c)$] $\omega_F(\mathbf{v})$ is a Cantor set for every $\mathbf{v}\in \partial\Delta$;
\item [$(d)$] The frequency $\textrm{freq}\,(i)$ with which the server is connected to the tank $i$										  at the switching times is
$$ \textrm{freq}\,(1)=\dfrac{\lambda_3}{\vert\boldsymbol{\lambda}\vert}\cong 34.44\%,\quad \textrm{freq}\,(2)=\frac{\lambda_1+\lambda_4}{\vert\boldsymbol{\lambda}\vert}\cong 41.82\%,\quad 
\textrm{freq}\,(3)=\frac{\lambda_2}{\vert\boldsymbol{\lambda}\vert}\cong 23,72\%.
$$
\end{itemize}
\end{theorem}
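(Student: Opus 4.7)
The plan is to produce a continuous, nondecreasing, surjective map $h:[0,1]\to[0,\vert\boldsymbol{\lambda}\vert]$ semiconjugating $f:=f_{d_1,d_2,d_3}$ to the isometric model, that is, $h\circ f=T\circ h$, and then to transfer the minimality and unique ergodicity of $T$ (Theorem \ref{thmT}) through $h$ to derive each of (a), (c), (d); item (b) will be immediate from the existence of $h$ combined with the topological conjugacy $F\sim f_{d_1,d_2,d_3}$ supplied by Lemma \ref{lmint}.

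For the construction, the combinatorial requirement that the discontinuities of $T$ be images under $h$ of the discontinuities of $f$ forces $h(z_i)=\lambda_1+\cdots+\lambda_i$ for $i=0,1,2,3,4$. Denote by $u_i=z_i-z_{i-1}$ the length of branch $i$ of $f$, so that $u_1+u_2+u_3+u_4=1$. Iterating the semiconjugacy relation at points of the form $f^k(z_{i-1})$ and exploiting that each branch of $f$ has slope $-\frac12$ while $T$ acts as an isometry on each $I_i$ produces a telescoping series expression for each $z_i$ in which every occurrence of $T^k(p_j)\in I_i$ contributes a weight $2^{-k}$; the precise coefficients are exactly the numbers $c_{ij}$ of (\ref{kij}), so the compatibility requirements among the $u_i$ assemble into the linear system displayed in Theorem \ref{omr}. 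Solving the system yields $u_1,u_2,u_3,u_4>0$ summing to $1$, hence $z_1,z_2,z_3\in(0,1)$, and inverting (\ref{y123}) produces the numerical values of $d_1,d_2,d_3$ announced in the theorem. With those $d_i$ in hand, I would define $h$ on the countable $f$-invariant set $S=\bigcup_{k\ge0}f^{-k}\{z_0,z_1,z_2,z_3,z_4\}$ by $h(f^k(z_{i-1}))=T^k(h(z_{i-1}))$, verify monotonicity on $S$ using the orientation of the branches together with the system above, and extend $h$ by right-continuity to all of $[0,1]$.

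Granting the existence of such $h$, the four conclusions follow in turn. Item (b) is immediate. For (a), a periodic orbit of $f$ would push forward to a periodic orbit of the minimal map $T$, which is impossible. For (c), $\omega_f(z)$ is closed, it has Lebesgue measure zero (any $f$-invariant set $A$ satisfies $|A|\le\frac12|A|$ because $f$ is a $\frac12$-contraction on each branch), and $h(\omega_f(z))=[0,\vert\boldsymbol{\lambda}\vert]$ by minimality of $T$, which, combined with continuity and monotonicity of $h$, forces $\omega_f(z)$ to be perfect; hence it is a Cantor set, and by the conjugacy of Lemma \ref{lmint} the same is true of $\omega_F(\mathbf{v})$ for every $\mathbf{v}\in\partial\Delta$. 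Item (d) follows from unique ergodicity of $T$: the time averages $\frac1n\#\{0\le k<n:T^k(x)\in I_i\}$ converge to $\lambda_i/\vert\boldsymbol{\lambda}\vert$ for every $x$; relating the branches of $f$ to tank labels via the anticlockwise parametrization $\varphi$ identifies which branches project to each of the three edges $v_1=0$, $v_2=0$, $v_3=0$ of $\partial\Delta$, and translates the four time averages for $T$ into the three stated frequencies of the switched server system.

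The principal difficulty is to verify that the linear system in Theorem \ref{omr} has a solution with $u_1,u_2,u_3,u_4$ all positive and that the tentative definition of $h$ on $S$ extends to a genuine continuous, nondecreasing function on $[0,1]$ which never collapses a whole branch $[z_{i-1},z_i]$ to a single point. Both steps depend crucially on Theorem \ref{thmT}: minimality ensures that the $T$-orbits of $p_1,\ldots,p_4$ visit every $I_i$ infinitely often (so the series defining the $c_{ij}$ are strictly positive and the matrix $M$ has full rank), while unique ergodicity provides the quantitative control needed to show that $h$ strictly increases across consecutive discontinuities along the orbits of the $z_i$.
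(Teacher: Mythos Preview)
Your global plan—build a continuous nondecreasing surjection $h:[0,1]\to[0,\vert\boldsymbol{\lambda}\vert]$ with $h\circ f=T\circ h$, then read off (a)--(d) from Theorem \ref{thmT} and Lemma \ref{lmint}—is exactly the paper's, and once $h$ exists your deductions of (a), (b), (d) match the paper's almost verbatim. The genuine gap is in the construction of $h$.

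You declare $S=\bigcup_{k\ge0}f^{-k}\{z_0,\ldots,z_4\}$ but then prescribe $h$ on \emph{forward} images $f^k(z_{i-1})$; these are different sets. More importantly, neither one is dense in $[0,1]$: the forward $f$-orbits of the $z_i$'s accumulate only on the (measure-zero) attractor $\bigcap_{k}f^k([0,1])$, and the backward orbits $f^{-k}(z_i)$ terminate in finitely many steps whenever $z_i$ lies outside that attractor (remember $f$ is a non-surjective $\frac12$-contraction, so $f^{-k}(z_i)$ exists only if $z_i\in f^k([0,1])$). Extending by right-continuity from a non-dense set does not yield a continuous map, so your $h$ is not well defined as stated. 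Your appeal to unique ergodicity for ``quantitative control'' of monotonicity is also not a substitute for an actual construction.

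The paper builds $h$ from the $T$-side via a Denjoy blow-up (Proposition \ref{pro1}): each point $p$ in the dense forward $T$-orbit $\mathcal{P}=\{T^k(p_j):k\ge0,\,1\le j\le4\}$ is replaced by a closed interval $G_p\subset[0,1]$ of length $\phi(p)$, with $\phi(p_j)=\ell_j$ and $\phi(T^k(p_j))=2^{-k}(\ell_1+\ell_4),\,2^{-k}\ell_2,\,2^{-k}\ell_3$ according to $j$. Because $\mathcal{P}$ is dense (minimality) and $\sum_p\vert G_p\vert=1$, the collapsing map that sends $G_p$ to $p$ is automatically continuous, nondecreasing and onto; this is $h$. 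The resulting piecewise contraction $g_{\mathbf{u},\boldsymbol{\ell}}$ carries the gap lengths $\boldsymbol{\ell}=(\ell_1,\ldots,\ell_4)$ as \emph{free parameters}, and the paper then (Proposition \ref{pro239}) picks the unique $\boldsymbol{\ell}$ for which $g_{\mathbf{u},\boldsymbol{\ell}}=f_{d_1,d_2,d_3}$; eliminating $\boldsymbol{\ell}$ between the semiconjugacy relation (\ref{324}) and the shape constraint (\ref{M201}) is precisely what produces the linear system for $u_1,u_2,u_3$ in the theorem statement. Your telescoping-series heuristic correctly anticipates the formula $u_i=\sum_j c_{ij}\ell_j$, but without the $\boldsymbol{\ell}$ degrees of freedom and the blow-up construction you cannot complete the argument.
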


In Theorem \ref{omr}, the item (d) follows from the item (b), from Theorem \ref{thmT} and from the version of Birkhoff's Ergodic Theorem for uniquely ergodic transformations (see \cite[Proposition 4.1.13]{AK1980}). The itens (a) and (d) are also confirmed by numerical simulations using the $R$ programming language. It is also worth mentioning that the matrices $P$ and $Q$ were obtained by using Rauzy induction.

\section{Poincar\'e maps of switched server systems and the proof of Lemma \ref{lmint}}

We keep all the notations given in the previous sections.

\begin{proof}[Proof of Lemma \ref{lmint}] Let $d_1,d_2,d_3>0$ be given. Let the switched server system parameters $d_{ij}$ be chosen according to $(\ref{dij})$.  
Let $0\le t_1<t_2\ldots$ denote the switching times. If at the switching time $t_m$ the server is connected to the tank $j$, then it keeps connected to the tank $j$ during the time-interval $[t_m,t_{m+1})$. Moreover,
\begin{equation}\label{tm1-38}
t_{m+1}-t_m=\dfrac{v_{j}(t_m)}{\rho-\rho_j}=\dfrac{v_{j}(t_m)}{1-\frac{1}{3}}=\dfrac{3}{2} v_j(t_m).
\end{equation}
For every $m\ge 1$ and $t_m\le  t \le  t_{m+1}$, the level $v_k(t)$ of any tank $k\in \{1,2,3\}$ is determined by the set of linear equations
\begin{equation}\label{tm2-38}
v_k(t)=\begin{cases}
v_k(t_m)+\dfrac{1}{3}(t-t_m)\,\quad\textrm{if}\,\,\,\, k\ne j\\[0.12in]
v_j(t_m)-\dfrac23(t-t_m)\quad
\textrm{if}\,\,\,\,  k=j
\end{cases},
 \end{equation}
 where $j$ is the position of the server at the time $t_m$. 
 
 The equation (\ref{tm2-38}) shows that the state ${\bf v}(t)=\big(v_1(t),v_2(t),v_3(t)\big)$ of the system at any time $t\in [t_m,t_{m+1})$ describes the position of a particle that moves with constant velocity. More precisely, when the particle hits $\partial\Delta$ at the switching time $t_m$, it takes the velocity $\mathbf{v}'(t_m+)$ and moves with such velocity till it hits the boundary again, at the time $t_{m+1}$, when then the velocity changes to $\mathbf{v}'(t_{m+1}+)$. In this way, $t\in [0,\infty)\mapsto \mathbf{v}(t)\in\Delta$ is the trajectory of a
  \textit{pseudo billiard}.  By sampling the system at the consecutive switching times $t_1$ and $t_2$, we obtain the Poincar\'e map $F:\partial\Delta\to\partial\Delta$ induced by the flow on the boundary $\partial\Delta$ of $\Delta$. 
More specifically, considering $m=1$ in (\ref{tm1-38}) and (\ref{tm2-38}), $t=t_2$ in (\ref{tm2-38}), and $(v_1,v_2,v_3)= (v_1(t_1),v_2(t_1),v_3(t_1))\in\partial\Delta$ yield
 \begin{equation}\label{tm3-38}
\big(F(v_1,v_2,v_3)\big)_k=v_k(t_{2})=\begin{cases}
v_k+\dfrac{1}{2}v_j& \textrm{if}\,\, k\ne j\\
0 &
\textrm{if}\,\,  k=j
\end{cases},
\end{equation}
where $j$ is the position of the server at the time $t_1$. Notice that if $i\neq j$ denotes the empty tank number at the time $t_1$, then $d_{ij}v_j=\max\,\{d_{ik} v_k: 1\le k\le 3\}$, that is, at the time $t_1$, the server begins emptying the tank $j$ with the largest scaled volume $d_{ij} v_j$. Now we will find a piecewise-defined formula for $F$. Let $$\mathbf{e}_1=(1,0,0),\quad \mathbf{e}_2=(0,1,0),\quad \mathbf{e}_3=(0,0,1).$$ 
Given  $\mathbf{p},\mathbf{q}\in\mathbb{R}^3$, let the line segments $[\mathbf{p},\mathbf{q}]$, $(\mathbf{p},\mathbf{q})$, $[\mathbf{p},\mathbf{q})$ and $(\mathbf{p},\mathbf{q}]$ be defined as usual, for instance,
$$[\mathbf{p},\mathbf{q}]=\{(1-\alpha)\mathbf{p}+\alpha\mathbf{q}:0\le\alpha\le 1\},\quad (\mathbf{p},\mathbf{q})=\{(1-\alpha)\mathbf{p}+\alpha\mathbf{q}:0 <\alpha < 1\}.$$  Notice that
$$ \partial \Delta=[\mathbf{e}_2,\mathbf{e_3}]\cup [\mathbf{e}_3,\mathbf{e_1}] \cup [\mathbf{e}_1,\mathbf{e_2}].
$$
Moreover,
\begin{equation}\label{iff1-38}
\begin{cases}(v_1,v_2,v_3)\in  [\mathbf{e}_2,\mathbf{e_3}] \iff v_1=0 \\
(v_1,v_2,v_3)\in  [\mathbf{e}_3,\mathbf{e_1}] \iff v_2=0 \\
(v_1,v_2,v_3)\in  [\mathbf{e}_1,\mathbf{e_2}] \iff v_3=0
\end{cases}.
\end{equation}
Now let us consider the decomposition of $\partial\Delta$ given by (see  Figure \ref{border-38}):
$$ \partial\Delta= [\mathbf{r}_1,\mathbf{e}_3]\cup [\mathbf{e}_3,\mathbf{r}_2) \cup [\mathbf{r}_2,\mathbf{e}_1] \cup [\mathbf{e}_1,\mathbf{r}_3) \cup [\mathbf{r}_3,\mathbf{e}_2] \cup [\mathbf{e}_2,\mathbf{r}_1),
$$
where
$$\mathbf{r}_1=\frac{d_{13}}{d_{12}+d_{13}}\mathbf{e}_{2}+\frac{d_{12}}{d_{12}+d_{13}}\mathbf{e}_3,\,\,\mathbf{r}_2=\frac{d_{21}}{d_{23}+d_{21}}\mathbf{e}_3+\frac{d_{23}}{d_{23}+d_{21}}\mathbf{e}_1,\,\,\mathbf{r}_3=\frac{d_{32}}{d_{31}+d_{32}}\mathbf{e}_1+\frac{d_{31}}{d_{31}+d_{32}}\mathbf{e}_2.$$
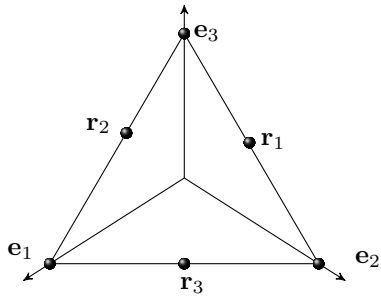
\begin{figure}[ht!]
\begin{tikzpicture}[tdplot_main_coords, scale=2.5]
   \draw[-stealth'] (0,0,0) -- (1.2,0,0);
   \draw[-stealth'] (0,0,0) -- (0,1.2,0) ;   
   \draw[-stealth'] (0,0,0) -- (0,0,1.2);
   \draw [thin, fill opacity=0.5]
          (1,0,0) -- (0,1,0) -- (0,0,1) -- cycle;
   \draw (0.95,-0.1) node[left] {$\mathbf{e}_1$};
   \draw (0,1.2,0.15) node[right] {$\mathbf{e}_2$};
   \draw (0,0.,1.) node[right] {$\mathbf{e}_3$};
   \draw (0,0.5,0.55) node[right] {$\mathbf{r}_1$};
   \draw (-0.2,-1,-0.35) node[right] {$\mathbf{r}_2$};
    \draw (0.55,0.45,-0.15) node[right] {$\mathbf{r}_3$};
  \shade[ball color = black] (0,0,1) circle (0.03cm);  
    \shade[ball color = black] (1,0,0) circle (0.03cm);  
     \shade[ball color = black] (0,1,0) circle (0.03cm);  
      \shade[ball color = black] (0.015,0.5,0.55) circle (0.03cm);  
 \shade[ball color = black] (-0.3,-0.73,-0.3) circle (0.03cm); 
  \shade[ball color = black] (0.5,0.5,0) circle (0.03cm);  
   \end{tikzpicture}
   \caption{Partition of  $\partial\Delta$}\label{border-38}
\end{figure}

Let $(v_1,v_2,v_3)\in (\mathbf{r}_1,\mathbf{e}_3]$, then $v_1=0$, that is, $i=1$. Moreover, 
$$ v_3 > \dfrac{d_{12}}{d_{12}+d_{13}},\, v_2< \dfrac{d_{13}}{d_{12}+d_{13}} \quad\textrm{and}\quad d_{13} v_3 >  \frac{d_{13} d_{12}}{d_{12}+d_{13}}=\dfrac{d_{12}d_{13}}{d_{12}+d_{13}}> d_{12} v_2,
$$
implying that the tank $3$ has the largest scaled volume, that is, $j=3$. Proceeding likewise with respect to $[\mathbf{e}_3,\mathbf{r}_2)$,  $ [\mathbf{r}_2,\mathbf{e}_1]$, etc., and using the convention that $l$ is right-continuous (see Introduction),
we reach the following conclusion.
\begin{equation}\label{iff2-38}
\begin{cases} (v_1,v_2,v_3)\in   [\mathbf{r}_1,\mathbf{e}_3]\cup [\mathbf{e}_3,\mathbf{r}_2)  \iff j=3 \\
(v_1,v_2,v_3)\in  [\mathbf{r}_2,\mathbf{e}_1] \cup [\mathbf{e}_1,\mathbf{r}_3)  \iff j=1 \\
(v_1,v_2,v_3)\in   [\mathbf{r}_3,\mathbf{e}_2] \cup [\mathbf{e}_2,\mathbf{r}_1) \iff  j=2
\end{cases}.
\end{equation}
Putting together (\ref{tm3-38}), (\ref{iff1-38}) and (\ref{iff2-38}), we reach
\begin{equation}\label{Fgrande-38}
 F(v_1,v_2,v_3)=\begin{cases} \left(\frac12 v_2,0,v_3+\frac12 v_2\right) & \textrm{if}\,\,\, (v_1,v_2,v_3)\in [\mathbf{e}_2,\mathbf{r}_1) \\
 \left(v_1+\frac12 v_3,v_2+\frac12 v_3,0\right) & \textrm{if}\,\,\, (v_1,v_2,v_3)\in [\mathbf{r}_1,\mathbf{e}_3]\cup  [\mathbf{e}_3,\mathbf{r}_2) 
   \\
  \left( 0, v_2+\frac12 v_1, v_3+\frac12 v_1\right) & \textrm{if}\,\,\, (v_1,v_2,v_3)\in [\mathbf{r}_2,\mathbf{e}_1]\cup [\mathbf{e}_1,\mathbf{r}_3) \\
    \left( v_1+\frac12 v_2, 0, \frac12 v_2\right) & \textrm{if}\,\,\, (v_1,v_2,v_3)\in [\mathbf{r}_3,\mathbf{e}_2]
\end{cases}.
\end{equation}
Let $\varphi:[0,1]\to\partial\Delta$ be the anticlockwise arc-length parametrization of $\partial\Delta$ (see Figure \ref{fig1-38}). More precisely,  
let
 \begin{equation}
\varphi(t)=\begin{cases}\label{defvarphi-38}
(1-3t)\mathbf{e}_2+3t\mathbf{e}_3&\,\,\,\textrm{if}\,\,\, t\in \left[0,\frac{1}{3}\right)\\
(2-3t)\mathbf{e}_3+(3t-1)\mathbf{e}_1 &\,\,\,\textrm{if}\,\,\, t\in \left[\frac13,\frac23\right)\\
(3-3t)\mathbf{e}_1+(3t-2)\mathbf{e}_2 &\,\,\,\textrm{if}\,\,\, t\in \left[\frac23,1\right]\\
\end{cases}.
\end{equation}
\begin{center}
 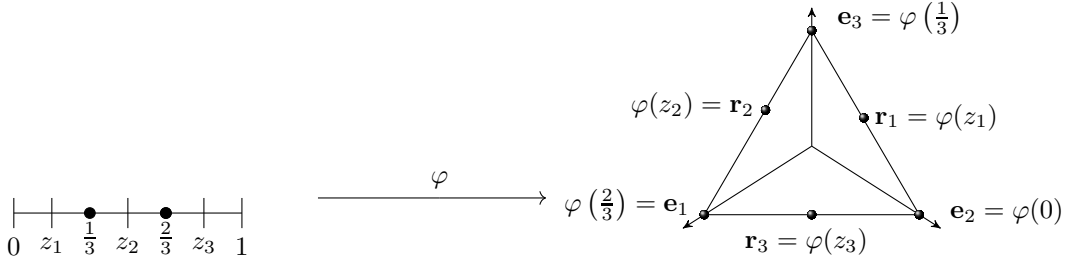
\begin{figure}[h!]
  \begin{tabular}{cc}
  %
  %

\begin{tikzpicture}[scale=2]
\draw[fill=black] (0.5,0.2) circle (1pt);
\draw[fill=black] (1,0.2) circle (1pt);
\draw[fill=black] (1,0.2) circle (1pt);
\draw (0,0.2) -- (1.5,0.2); 
\draw   (0,0.3)--(0,0.1) node[below]{$0$};
\draw   (0.5,0.2) node[below]{$\frac13$};
\draw   (0.25,0.3)--(0.25,0.1) node[below]{$z_1$};
\draw   (0.75,0.3)--(0.75,0.1) node[below]{$z_2$};
\draw   (1.25,0.3)--(1.25,0.1) node[below]{$z_3$};
\draw   (1,0.2) node[below]{$\frac23$};
\draw   (1.5,0.3)--(1.5,0.1) node[below]{$1$};
\draw   (2,0.3)--(2.8,0.3) node[above]{$\varphi$};
\draw[->]   (2.8,0.3)--(3.5,0.3);
\draw plot[id=x] function{x*x};
\end{tikzpicture}

  \begin{tikzpicture}[tdplot_main_coords, scale=2]
   \draw[-stealth'] (0,0,0) -- (1.2,0,0);
   \draw[-stealth'] (0,0,0) -- (0,1.2,0) ;   
   \draw[-stealth'] (0,0,0) -- (0,0,1.2);
   \draw [thin, fill opacity=0.5]
          (1,0,0) -- (0,1,0) -- (0,0,1) -- cycle;
   \draw (0.95,-0.1) node[left] {$\varphi\left(\frac23\right)=\mathbf{e}_1$};
   \draw (0,1.2,0.15) node[right] {$\mathbf{e}_2=\varphi(0)$};
   \draw (0,0.15,1.2) node[right] {$\mathbf{e}_3=\varphi\left(\frac13\right)$};
    \draw (0,0.5,0.55) node[right] {$\mathbf{r}_1=\varphi(z_1)$};
   \draw (-0.2,-0.65,-0.15) node[left] {$\varphi(z_2)=\mathbf{r}_2$};
    \draw (0.55,0.5,-0.2) node {$\mathbf{r}_3=\varphi(z_3)$};
  
    \shade[ball color = black] (0,0,1) circle (0.03cm);  
    \shade[ball color = black] (1,0,0) circle (0.03cm);  
     \shade[ball color = black] (0,1,0) circle (0.03cm);  
      \shade[ball color = black] (0.015,0.5,0.55) circle (0.03cm);  
 \shade[ball color = black] (-0.3,-0.73,-0.3) circle (0.03cm); 
  \shade[ball color = black] (0.5,0.5,0) circle (0.03cm);  
  \end{tikzpicture}

   \end{tabular}
  \caption{The arc-length parametrization of $\partial\Delta$} 
  \label{fig1-38}
 \end{figure}
\end{center}
The inverse of $\varphi$ is defined by
\begin{equation}\label{thei-38}
\varphi^{-1}(\mathbf{p})=\begin{cases} \dfrac{1}{3\sqrt{2}}\Vert \mathbf{p}-\mathbf{e}_2\Vert\quad\quad\,\,\,\,\,\textrm{if}\quad \mathbf{p}\in [\mathbf{e}_2,\mathbf{e}_3] \\[0.15in]
\dfrac{1}{3\sqrt{2}} \Vert \mathbf{p}-\mathbf{e}_3\Vert +\dfrac{1}{3}\quad \textrm{if}\quad \mathbf{p}\in [\mathbf{e}_3,\mathbf{e}_1] \\[0.1in] 
\dfrac{1}{3\sqrt{2}} \Vert \mathbf{p}-\mathbf{e}_1\Vert +\dfrac{2}{3}\quad  \textrm{if}\quad \mathbf{p}\in [\mathbf{e}_1,\mathbf{e}_2] 
\end{cases}.
\end{equation}
It follows from (\ref{Fgrande-38}), (\ref{defvarphi-38}), and (\ref{thei-38}) that the map $f=\varphi^{-1}\circ F\circ \varphi$ is given by
$$
f(z)=
\begin{cases} -\dfrac12 z + \dfrac12 & \textrm{if} \,\, z\in [z_0,z_1)\\[0.5em]
-\dfrac12 z + 1 & \textrm{if} \,\, z\in [z_1,z_2)\\[0.5em]
-\dfrac12 z + \dfrac12 & \textrm{if} \,\, z\in [z_2,z_3) \\[0.5em]
-\dfrac12 z + 1 & \textrm{if} \,\, z\in [z_3,z_4] \\[0.5em]
\end{cases},
$$
where
\begin{equation}\label{y1234}
z_0=0,\quad z_1=\dfrac{d_{12}}{3(d_{12}+d_{13})},\quad z_2=\dfrac{d_{23}}{3(d_{23}+d_{21})}+\dfrac{1}{3}, \quad
z_3=\dfrac{d_{31}}{3(d_{31}+d_{32})}+\dfrac{2}{3},\quad z_4=1.
\end{equation}
By (\ref{dij}),  we have that $(\ref{y1234})$ is equivalent to
$(\ref{y123})$, hence $f(z)=f_{d_1,d_2,d_3}(z)$ for every $z\in [0,1]$. This concludes the proof of 
Lemma \ref{lmint}.
\end{proof}
 
\section{Interval exchange transformations (IETs)}\label{tim}

In this section, we gather some results related to the construction of topologically transitive IETs. We will use them in the next section in the proof of Theorem \ref{thmT}.

Let $a>0$ and $I=[0,a]$. Following \cite{AK1980},
we say that $T:I\to I$ is an \textit{$n$-interval exchange transformation} ($n$-IET) if there exist a partition of $I$ into intervals
$I_1,I_2,\ldots,I_n$ with endpoints $\{x_0,x_1\}$, $\{x_1,x_2\}$, \ldots, $\{x_{n-1},x_n\}$ satisfying $0=x_0<x_1<\cdots<x_n=a$
 such that 
$T$ is one-to-one on $I{\setminus}\{0\}$ and $T\vert_{I_i}$ is an isometry ($i=1,2,\ldots,n$). 
The vector $\lambda=(\lambda_1,\lambda_2,\ldots,\lambda_n)$ with $\lambda_i=x_{i}-x_{i-1}$ is called the \textit{length vector}. 
Notice that there exist $\varepsilon_i\in \{-1,1\}$ and $b_i\in \mathbb{R}$ $(i=1,2,\ldots,n)$ such that
\begin{equation}\label{Ti}
T(x)=T_i(x):=\varepsilon_i x + b_i \quad\textrm{for all}\quad x\in (x_{i-1},x_i) \quad (i=1,2,\ldots,n).
\end{equation}
If $\varepsilon_i=1$ ($i=1,2,\ldots,n$), then we say that $T$ is \textit{standard}, otherwise we say that $T$ has \textit{flips}.
We assume that $\mathcal{D}(T)=\{x_1,x_2,\ldots,x_{n-1}\}$ is the set of discontinuities of $T$, otherwise $T$ would be an $m$-IET with $m<n$.
  
  \subsection{Poincar\'e maps of IETs}\label{Frm}\rule[0pt]{0pt}{5pt}\\ 
  
  Let $0=x_0<x_1<\ldots<x_n=a$ and let 
$T:I\to I$ be an $n$-IET  defined on $I=[0,a]$ with set of discontinuities $\mathcal{D}(T)=\{
x_1,x_2,\ldots,x_{n-1}\}$. 
 
 \begin{definition}[T-tower] Given $r\ge 1$, we say that $\{J,T(J),\ldots,T^{r-1}(J)\}$ is a \textit{$T$-tower} if $J,T(J),\ldots,T^{r-1}(J)$ are pairwise disjoint open intervals. Each interval $T^k(J)$, $0\le k\le r-1$, is called a \textit{floor}.
 \end{definition}
It is an elementary fact that all the floors in a $T$-tower have the same length $\vert J\vert$. In this way, $r\le \vert I\vert/\vert J\vert$. Equivalently, a family $\{J_1,J_2,\ldots,J_{r}\}$ of pairwise disjoint open intervals is a $T$-tower if there exists a permutation $\tau:\{1,\ldots,r\}\to \{1,\ldots,r\}$ such that
$J_{\tau(i+1)}=T(J_{\tau(i)})$ for every $1\le i\le r-1$.

The following result is a consequence of the injectivity of $T$ on $(0,a)\subset I{\setminus}\{0\}$.

\begin{lemma}\label{lem31} If $\{J,T(J),\ldots, T^{r-1}(J)\}$ is a $T$-tower with $T^{r-1}(J)\cap\mathcal{D}(T)=\emptyset$, then either
$T^r(J)\cap J \neq\emptyset$ or $\{J,T(J),\ldots, T^{r}(J)\}$ is a $T$-tower.
\end{lemma}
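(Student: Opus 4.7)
The plan is first to check that $T^r(J)$ is a well-defined open interval, and then to rule out, by an injectivity argument, that it can overlap any floor other than possibly $J$ itself.

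For the first point, I would use the hypothesis $T^{r-1}(J)\cap\mathcal{D}(T)=\emptyset$. Because $T^{r-1}(J)$ is a connected open interval avoiding every discontinuity $x_1,\ldots,x_{n-1}$, it must sit inside a single component $(x_{i-1},x_i)$, where $T$ acts as the affine isometry $T_i(x)=\varepsilon_i x+b_i$ from (\ref{Ti}). Hence $T^r(J)=T_i\bigl(T^{r-1}(J)\bigr)$ is an open interval of the same length as $J$.

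Now assume $T^r(J)\cap J=\emptyset$; I need to show $T^r(J)$ is disjoint from every other floor $T^k(J)$ with $1\le k\le r-1$, so that $\{J,T(J),\ldots,T^r(J)\}$ is again a $T$-tower. Suppose, for contradiction, that $T^r(J)\cap T^k(J)\neq\emptyset$ for some such $k$, and pick $u,v\in J$ with $T^r(u)=T^k(v)$. Because each floor $T^j(J)$ for $0\le j\le r-1$ is an open interval contained in $[0,a]$ (hence in the open set $(0,a)$ — no open interval in $[0,a]$ contains the endpoint $0$), all intermediate iterates $T^{r-1}(u),T^{r-2}(u),\ldots$ and $T^{k-1}(v),T^{k-2}(v),\ldots$ lie in $I\setminus\{0\}$. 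The injectivity of $T$ on $I\setminus\{0\}$ then allows me to cancel one $T$ at a time: $T^{r-1}(u)=T^{k-1}(v)$, then $T^{r-2}(u)=T^{k-2}(v)$, and so on, until after $k$ cancellations I obtain $T^{r-k}(u)=v$. Since $1\le r-k\le r-1$, this gives $v\in J\cap T^{r-k}(J)$, contradicting the pairwise disjointness of the original tower $\{J,T(J),\ldots,T^{r-1}(J)\}$.

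The main (modest) obstacle is the bookkeeping in the cascade of cancellations: one has to be sure at every step that the two points to which injectivity is being applied actually lie in $I\setminus\{0\}$, which is exactly why the fact that the floors are \emph{open} intervals in $[0,a]$ — and therefore never touch the single bad point $0$ where $T$ could fail to be injective — is doing the real work. Everything else is a direct consequence of the isometric local structure of $T$ and of the definition of a $T$-tower.
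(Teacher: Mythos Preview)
Your proof is correct and follows essentially the same approach as the paper: first use $T^{r-1}(J)\cap\mathcal{D}(T)=\emptyset$ to see that $T^r(J)$ is an open interval, then use injectivity of $T$ on $I\setminus\{0\}$ (together with the fact that open subintervals of $[0,a]$ avoid $0$) to rule out $T^r(J)\cap T^k(J)\neq\emptyset$ for $1\le k\le r-1$. The only cosmetic difference is that the paper applies injectivity once at the set level---from $T^{r-1}(J)\cap T^{k-1}(J)=\emptyset$ conclude $T^r(J)\cap T^k(J)=\emptyset$---whereas you peel off $k$ applications of $T$ down to $T^{r-k}(J)\cap J$; both reach the same contradiction.
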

\begin{proof} Set $U=(0,a){\setminus}\mathcal{D}(T)$. Since $T^{r-1}(J)$ is an open subinterval of $U$ and $T$ is an isometry on each connected component of $U$, we have that $T^{r}(J)=T\big(T^{r-1}(J)\big)$ is an open interval. Without loss of generality, we assume that $r\ge 2$.
Clearly,  since $T$ is injective on $I{\setminus}\{0\}$ and $J,T(J),\ldots, T^{r-1}(J)$ are pairwise disjoint open intervals,
we have that  $T^r(J)\cap T^k(J)=\emptyset$ for all
$1\le k\le r-1$, which concludes the proof. \end{proof}
 Let  $0<a'<a$ and $I'=[0,a']$. Given $x\in I$, let
$N(x)\in \mathbb{N}\cup \{\infty\}$ be defined by 
\begin{equation}\label{defN}
N(x)=\inf\, \{N\ge 1: T^N(x)\in I' \},
\end{equation}
 where $\inf\emptyset=\infty$. The map $T':\textrm{dom}\,(T')\to I'$, where $\textrm{dom}\,(T')=\{x\in I': N(x)<\infty\}$ and $$T'(x)=T^{N(x)}(x)=\underbrace{T\circ T\circ\ldots\circ T}_{N(x)\,\textrm{times}}(x)$$ is called the \textit{Poincar\'e map of $T$ on $I'$}. 
 \begin{definition}[Admissible interval]\label{admint}  The interval $I'$ is \textit{admissible} if there exist  
$0=x_0'<x_1'<\ldots<x_n'=a'$ such that $N(x_i')<\infty$ for every $1\le i\le n$ and the set
 $B=\bigcup_{i=1}^{n} \left\{x_i',T(x'_i),\ldots,T^{N(x'_i)-1}(x'_i)\right\}$ satisfies
 \begin{itemize}
  \item [$(H1)$] $B\supset \mathcal{D}(T)$;
 \item [$(H2)$] $a'\in T(B)$.
   \end{itemize}
   \end{definition}
   Henceforth, we will assume that $I'$ is an admissible interval.
   \begin{lemma}\label{lem32}
    Let $K\subset I{\setminus B}$ be an open interval. Then $K\cap\mathcal{D}(T)=\emptyset$. Moreover,
  one of the following alternatives happens: 
 \begin{itemize} 
 \item [$(i)$] $T(K)$ is an open subinterval of $I'$;
 \item [$(ii)$] $T(K)\cap I'=\emptyset$ and $T(K)$ is an open subinterval of $I{\setminus B}$.
 \end{itemize}
 \end{lemma}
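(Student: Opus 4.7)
The plan is to split the lemma into its two assertions and handle the dichotomy by reducing it to a single geometric obstruction. The inclusion $K \cap \mathcal{D}(T) = \emptyset$ is immediate from $(H1)$ since $\mathcal{D}(T) \subset B$ and $K \subset I \setminus B$. Because $K$ is a connected open interval disjoint from the finite set $\mathcal{D}(T)$, it lies inside a single component $(x_{i-1}, x_i)$ of $I \setminus \mathcal{D}(T)$, and by formula (\ref{Ti}) the restriction $T\vert_K$ coincides with the affine isometry $x \mapsto \varepsilon_i x + b_i$. Hence $T(K)$ is itself an open subinterval of $I = [0, a]$, and the whole dichotomy $(i)$--$(ii)$ reduces to showing that $a' \notin T(K)$: connectedness of $T(K)$ will then force either $T(K) \subset [0, a')$, giving case $(i)$, or $T(K) \subset (a', a]$, giving the first half of case $(ii)$.

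To prove $a' \notin T(K)$, I would argue by contradiction. Assume $a' = T(x)$ for some $x \in K$; since $K$ is open in $[0, a]$ one has $x > 0$. Hypothesis $(H2)$ provides $y \in B$ with $T(y) = a'$, and because $T$ is one-to-one on $I \setminus \{0\}$, the equality $T(x) = T(y)$ together with $x > 0$ forces $x = y$ whenever $y > 0$, placing $x$ in $B$ and contradicting $K \cap B = \emptyset$. To complete case $(ii)$ I would further show that $T(K) \cap I' = \emptyset$ implies $T(K) \cap B = \emptyset$. Any $w \in T(K) \cap B$ is either of the form $x_i' \in I'$ (ruled out by $T(K) \cap I' = \emptyset$) or of the form $T^k(x_i')$ with $1 \leq k \leq N(x_i') - 1$; in the latter case, writing $w = T(z)$ with $z \in K$ gives $T(z) = T(T^{k-1}(x_i'))$, and injectivity on $I \setminus \{0\}$ yields $z = T^{k-1}(x_i') \in B$, again a contradiction.

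The only genuine subtlety is that $T$ is injective only on $I \setminus \{0\}$, so each of the preimage comparisons above has a loophole when the competing element happens to be $0$. The side in $K$ is automatically nonzero (as $K$ is open in $[0, a]$), so the burden is to exclude $y = 0$ in the first argument and $T^{k-1}(x_i') = 0$ in the second. I would close this hole by invoking the explicit structure of $B$ as the union of finite orbit segments $\{x_i', T(x_i'), \ldots, T^{N(x_i')-1}(x_i')\}$ together with the first-return definition of $N(x_i')$; this endpoint bookkeeping at $0$ is the single real obstacle I anticipate, and everything else follows from a one-line appeal to connectedness of $T(K)$.
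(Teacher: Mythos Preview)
Your approach is essentially the paper's, and it is correct. The paper streamlines the endpoint bookkeeping you flag by proving once and for all that $K\cup B\subset I\setminus\{0\}$: each $x_i'$ is positive, and for $1\le k\le N(x_i')-1$ the minimality in the definition of $N$ gives $T^k(x_i')\notin I'$, hence $T^k(x_i')\neq 0$; with that in hand, injectivity of $T$ on $I\setminus\{0\}$ yields $T(K)\cap T(B)=\emptyset$ in one stroke, from which both $a'\notin T(K)$ (via $(H2)$) and, in case~(ii), $T(K)\cap B\subset B\setminus I'\subset T(B)$ follow immediately. Your piecemeal treatment of the same injectivity issue is fine, but note that your phrase ``since $K$ is open in $[0,a]$ one has $x>0$'' is slightly off: what you need is that $K$ is an open interval of $\mathbb{R}$ contained in $[0,a]$, which is the paper's convention and indeed forces $0\notin K$.
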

 \begin{proof}  By $(H1)$, $\mathcal{D}(T)\subset B$, thus $K\cap \mathcal{D}(T)=\emptyset$ and $T(K)$ is an open interval. Let $1\le i\le n$. Then $x_i'\neq 0$. By (\ref{defN}), $T^k(x_i')\not\in I'$ for all $1\le k\le N(x_i')-1$. In this way, $K\cup B\subset I{\setminus}\{0\}$. Hence, $T$ is injective on $K\cup B$, then $T(B)\cap T(K)=\emptyset$. Now, by $(H2)$,
 $a'\not\in T(K)$, thus  either $T(K)\subset I'$ or $T(K)\cap I'=\emptyset$. In the latter case,
 $T(K)\cap B\subset B{\setminus} I'\subset T(B)$, which yields $T(K)\subset I{\setminus} B$.     \end{proof}

  \begin{lemma}\label{lem33} Let $J$ be an open subinterval of $I'{\setminus} \{x'_1,\ldots,x'_{n-1}\}$, then there exists $r\ge 1$ such
  that \linebreak $\{J, T(J),\ldots,T^{r-1}(J)\}$ is a $T$-tower, $\bigcup_{k=0}^{r-1} T^k(J)\subset I{\setminus} \{x_0,\ldots,x_n\}$, $I'\cap\bigcup_{k=1}^{r-1} T^k(J)=\emptyset$
  and 
  $T^r(J)$ is a subinterval of $I'$. 
  \end{lemma}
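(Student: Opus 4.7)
The plan is to push $J$ forward one step at a time by iterating Lemma \ref{lem32}, using Lemma \ref{lem31} to preserve the $T$-tower property at each step, and to argue that the tower cannot grow forever because pairwise disjoint floors of common length $\vert J\vert$ inside $I$ yield the bound $r\le\vert I\vert/\vert J\vert$.

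Before starting the induction, I would verify the base-case containment $J\subset I{\setminus}B$. Since $N(x_i')$ is by definition the first return time of $x_i'$ to $I'$, the intermediate orbit points $T^k(x_i')$ with $1\le k\le N(x_i')-1$ lie outside $I'$, and therefore $B\cap I'\subset\{x_1',\ldots,x_n'\}$. By hypothesis $J$ is an open subinterval of $I'$ avoiding $\{x_1',\ldots,x_{n-1}'\}$, and being an open interval it cannot contain the endpoint $x_n'=a'$ of $I'$ either, hence $J\cap B=\emptyset$.

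Then I would induct on $r$. Assume that $\{J,T(J),\ldots,T^{r-1}(J)\}$ is a $T$-tower with each floor contained in $I{\setminus}B$ and $T^k(J)\cap I'=\emptyset$ for $1\le k\le r-1$ (the case $r=1$ is the base case above). Apply Lemma \ref{lem32} to $K=T^{r-1}(J)$: alternative (i) gives $T^r(J)\subset I'$ and finishes the proof, while alternative (ii) gives $T^r(J)\subset I{\setminus}B$ and $T^r(J)\cap I'=\emptyset$. In case (ii), since $(H1)$ yields $\mathcal{D}(T)\subset B$, the hypothesis $T^{r-1}(J)\cap\mathcal{D}(T)=\emptyset$ of Lemma \ref{lem31} is met, so that lemma produces the dichotomy ``$T^r(J)\cap J\ne\emptyset$ or $\{J,\ldots,T^r(J)\}$ is a $T$-tower''; the first alternative is impossible because $J\subset I'$ and $T^r(J)\cap I'=\emptyset$, so the tower extends and the induction continues. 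The length bound forces termination in alternative (i), producing the required $r$. The residual conclusion $\bigcup_{k=0}^{r-1}T^k(J)\subset I{\setminus}\{x_0,\ldots,x_n\}$ is then automatic: each floor is an open subinterval of $I$, so it avoids $x_0=0$ and $x_n=a$, and it avoids $\mathcal{D}(T)=\{x_1,\ldots,x_{n-1}\}$ because it lies in $I{\setminus}B\subset I{\setminus}\mathcal{D}(T)$. The only subtle ingredient is the base-case containment $J\subset I{\setminus}B$, which the admissibility conditions $(H1)$--$(H2)$ together with the definition of $N$ handle cleanly; everything else is a routine tower-extension induction.
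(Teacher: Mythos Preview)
Your proof is correct and follows essentially the same approach as the paper: both start from the observation $B\cap I'\subset\{x_1',\ldots,x_n'\}$ to place $J$ inside $I\setminus B$, then iterate Lemma~\ref{lem32} until alternative (i) occurs, using the length bound $r\le |I|/|J|$ to guarantee termination. Your version is more explicit than the paper's, which compresses the induction into the single sentence ``by applying Lemma~\ref{lem32} finitely many times, we can prove that $r=\max A$ works''; in particular you spell out the invocation of Lemma~\ref{lem31} needed to extend the tower at each step, a point the paper leaves implicit.
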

  \begin{proof} By the definition of $B$, we have that $B\cap I'=\{x_1',\ldots,x_{n}'\}$, thus $J\subset I{\setminus }B$ and $T(J)$ is an open interval by (H1). If $T(J)\subset I'$, then we take $r=1$ and we are done. 
  Otherwise, applying Lemma \ref{lem32} with
  $K=J$ yields $I'\cap T(J)=\emptyset$ and $T(J)\subset I{\setminus B}$. Moreover, in this case, we have that the set  
  $$A=\left\{\alpha\ge 1: \{J,T(J),\ldots,T^{\alpha-1}(J)\}\,\,\textrm{is an}\,\, \alpha\textrm{-tower with}\,\, I'\cap\bigcup_{k=1}^{\alpha-1} T^{k}(J)=\emptyset\right\}.$$
  is a non-empty subset of $\left[1,\frac{\vert I\vert }{\vert J\vert}\right]$. By applying Lemma \ref{lem32} finitely many times, we can prove that  $r=\max A$ works.
 \end{proof}

  \begin{proposition}\label{277} Let $T:I\to I$ be an $n$-IET and $I'\subset I$ be an admissible interval for $T$. Then,  for each $1\le i\le n$, there exist $r_i\ge 1$ and a  word $i_0 i_1 \ldots i_{{r_i}-1}$ over the alphabet $\mathcal{A}=\{1,\ldots,n\}$
    such that the interval $J_i=(x_{i-1}',x_i')$ satisfies
  \begin{itemize}
\item [$(A1)$] $\{ J_i$, $T(J_i),\ldots,T^{r_{i}-1}(J_i)\}$ is a $T$-tower with $I'\cap\bigcup_{k=1}^{r_i-1} T^k(J_i)=\emptyset$;
\item [$(A2)$] $T^{r_i}(J_i)$ is an open subinterval of $I'$;
\item [$(A3)$] $T^k(J_i)\subset (x_{i_k-1},x_{i_k})$ for every $0\le k\le r_i-1$;
\item [$(A4)$] $N(x)=r_i$ for all $x\in J_i$.
 \end{itemize}
 Moreover, the intervals $T^k(J_i), 0\le k\le r_i-1,\,\, 1\le i\le n$, are pairwise disjoint.
  \end{proposition}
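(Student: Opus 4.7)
\bigskip
\noindent\textbf{Proof plan for Proposition \ref{277}.} The idea is that Lemma \ref{lem33} already does essentially all of the work for a single $i$, so the plan is to apply it to each of the $n$ open subintervals $J_i=(x_{i-1}',x_i')$ of $I'$ separately, read off the symbolic word from the resulting tower, and then prove the global disjointness claim by a short injectivity argument.

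First, I fix $1\le i\le n$. Since $J_i$ is an open subinterval of $I'\setminus\{x_1',\ldots,x_{n-1}'\}$, Lemma \ref{lem33} supplies an integer $r_i\ge 1$ such that $\{J_i,T(J_i),\ldots,T^{r_i-1}(J_i)\}$ is a $T$-tower, the floors $T^k(J_i)$ for $0\le k\le r_i-1$ are all contained in $I\setminus\{x_0,\ldots,x_n\}$, the intermediate floors miss $I'$, and $T^{r_i}(J_i)\subset I'$. This gives $(A1)$ and $(A2)$ at once. For $(A3)$, I note that each floor $T^k(J_i)$ is a connected open interval disjoint from $\{x_0,x_1,\ldots,x_n\}\supset\mathcal{D}(T)$, so it lies entirely inside a unique component $(x_{i_k-1},x_{i_k})$ of $I\setminus\{x_0,\ldots,x_n\}$; defining $i_k$ in this way produces the word $i_0i_1\ldots i_{r_i-1}$. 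For $(A4)$, given $x\in J_i$, the property $T^k(x)\in T^k(J_i)$ together with $T^k(J_i)\cap I'=\emptyset$ for $1\le k\le r_i-1$ and $T^{r_i}(J_i)\subset I'$ forces $N(x)=r_i$ by the very definition \eqref{defN}.

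It remains to prove the global disjointness of the floors. Suppose, aiming at a contradiction, that $T^k(J_i)\cap T^\ell(J_j)\neq\emptyset$ for some $(i,k)\neq(j,\ell)$ with $0\le k\le r_i-1$ and $0\le\ell\le r_j-1$. By symmetry we may assume $k\ge\ell$. Every floor avoids $\{x_0,\ldots,x_n\}$ and in particular the point $0$, and $T$ is injective on $I\setminus\{0\}$; hence $T^\ell$ is injective on the union of the first $\ell$ floors of both towers. Applying this injectivity to a common point yields $T^{k-\ell}(J_i)\cap J_j\neq\emptyset$. If $k-\ell=0$ this contradicts $J_i\cap J_j=\emptyset$ (since $i\neq j$ and the $J_m$ are disjoint subintervals of $I'$). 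If $k-\ell\ge 1$, then $T^{k-\ell}(J_i)$ is an intermediate floor of the tower over $J_i$, so by $(A1)$ it is disjoint from $I'\supset J_j$, a contradiction.

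The step I expect to carry the real weight is this last injectivity argument, since one has to be sure that the backward iteration stays inside $I\setminus\{0\}$; this is the reason the strengthened conclusion of Lemma \ref{lem33}, namely that the floors are contained in $I\setminus\{x_0,\ldots,x_n\}$ rather than merely in $I\setminus\mathcal{D}(T)$, is essential and not merely cosmetic. Everything else is a direct repackaging of Lemma \ref{lem33} applied to each $J_i$ in turn. \qed
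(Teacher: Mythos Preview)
Your proof is correct and follows essentially the same approach as the paper's own: apply Lemma~\ref{lem33} to each $J_i$ to obtain $(A1)$--$(A3)$, derive $(A4)$ from $(A1)$ and $(A2)$, and then settle the global disjointness by an injectivity-and-pullback argument leading to a floor of one tower meeting $I'$. Your write-up is in fact more explicit than the paper's about why $(A3)$ holds and about why the iterates stay away from $0$ so that the injectivity of $T$ on $I\setminus\{0\}$ can be applied repeatedly; the paper's version pulls back in the opposite direction (reducing to $J_i\cap T^{k_j-k_i}(J_j)\neq\emptyset$ rather than $T^{k-\ell}(J_i)\cap J_j\neq\emptyset$), but the underlying idea is identical.
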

  \begin{proof}Applying Lemma \ref{lem33} with $J=J_i$ yields $(A1)$, $(A2)$ and $(A3)$. The item $(A4)$ follows from $(A1)$ and $(A2)$. We claim that $T^k(J_i),$ $0\le k\le r_i-1,\,\, 1\le i\le n$ are pairwise disjoint. Otherwise, by $(A1)$, there exist $i\neq j$, $0\le k_i\le r_i-1$, $0\le k_j\le r_j-1$ with $k_i\le k_j$ such that
  $T^{k_i}(J_i)\cap T^{k_j}(J_j)\neq\emptyset$. By the injectivity of $T$ on $(0,a)$, we obtain that $J_i\cap T^{k_j-k_i}(J_j)\neq\emptyset$, which is a contradiction since $J_i\subset I'$ while $T^{k_j-k_i}(J_j)\cap I'=\emptyset$. 
   \end{proof}
 
 In Proposition \ref{277}, the word $i_0 i_1 \ldots i_{r_i-1}$ is the \textit{symbolic itinerary} of the $T$-tower $\{ J_i$, $T(J_i),\ldots,T^{r_{i}-1}(J_i)\}$. Concerning the next three corollaries, we let $J_i$, $r_i$ and $i_0 i_1\ldots i_{{r_i}-1}$,  be as in the statement of Proposition \ref{277}.

\begin{corollary}\label{xy17} Let $T:I\to I$ be an $n$-IET and $I'\subset I$ be an admissible interval for $T$. Then the Poincar\'e map $T'$ of $T$ on $I'$ is the $n'$-IET, $n'\le n$, defined by
$$T'(x)=T_{i_{r_i-1}}\circ \cdots\circ T_{i_1}\circ T_{i_0}(x)\quad \textrm{if}\quad x\in (x_{i-1}',x_i'),$$
where $T_i:\mathbb{R}\to\mathbb{R}$ is the affine map defined by $(\ref{Ti})$. Notice that $\mathcal{D}(T')\subset\{x_1',\ldots,x_{n-1}'\}$.    \end{corollary}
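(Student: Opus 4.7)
The plan is to unpack the consequences of Proposition \ref{277} and verify that the explicit composition formula for $T'$ follows by tracing each tower through its symbolic itinerary.

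First, I would fix $i\in\{1,\ldots,n\}$ and an arbitrary $x\in J_i=(x'_{i-1},x'_i)$. Item $(A4)$ of Proposition \ref{277} gives $N(x)=r_i<\infty$, so by the definition of the Poincar\'e map one has $T'(x)=T^{r_i}(x)$. In particular, $T'$ is defined on all of $J_i$ with constant return time $r_i$.

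Next, I would use item $(A3)$: for each $0\le k\le r_i-1$, the point $T^k(x)$ lies in the open interval $(x_{i_k-1},x_{i_k})$, which is a single continuity branch of $T$ on which $T$ agrees with the affine isometry $T_{i_k}$ from $(\ref{Ti})$. Therefore $T(T^k(x))=T_{i_k}(T^k(x))$, and iterating this identity from $k=0$ up to $k=r_i-1$ yields
\[
T'(x)=T^{r_i}(x)=T_{i_{r_i-1}}\circ\cdots\circ T_{i_1}\circ T_{i_0}(x),
\]
which is the asserted formula. Because each $T_{i_k}$ has the form $\varepsilon_{i_k}x+b_{i_k}$ with $\varepsilon_{i_k}\in\{-1,1\}$, the composition is itself an affine map of slope $\pm 1$ on $J_i$, that is, an isometry on $J_i$.

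Finally, since the intervals $J_1,\ldots,J_n$ partition $I'$ (up to the points $0,x'_1,\ldots,x'_{n-1},a'$), and $T'$ is an isometry on each $J_i$ by the previous paragraph, $T'$ is a piecewise isometry on $I'$ whose discontinuity set is contained in $\{x'_1,\ldots,x'_{n-1}\}$. The intervals $T^k(J_i)$ are pairwise disjoint by the last sentence of Proposition \ref{277}, so injectivity of $T'$ on $I'{\setminus}\{0\}$ follows directly from the injectivity of $T$ on $I{\setminus}\{0\}$. Merging any consecutive $J_i$'s on which the affine formulas happen to coincide gives the final $n'$-IET with $n'\le n$. There is no real obstacle here: the entire corollary is a bookkeeping consequence of Proposition \ref{277}, and the only mild point to check is that the composition formula holds uniformly on $J_i$ (not merely pointwise), which is guaranteed because the whole tower $\{J_i,T(J_i),\ldots,T^{r_i-1}(J_i)\}$ sits inside $I{\setminus}\{x_0,\ldots,x_n\}$ by $(A1)$--$(A3)$.
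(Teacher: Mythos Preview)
Your proposal is correct and matches the paper's approach exactly: the paper gives no explicit proof of this corollary, treating it as an immediate consequence of Proposition~\ref{277}, and your argument simply unpacks items $(A2)$--$(A4)$ together with the pairwise disjointness of the tower floors. The only remark is that your injectivity step is a touch terse (one should note that if $T'(x)=T'(y)$ with $x\in J_i$, $y\in J_j$, then the preimages $T^{r_i-1}(x)$ and $T^{r_j-1}(y)$ lie in distinct disjoint tower floors unless $x=y$), but this is precisely what the disjointness clause you cite provides.
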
 
 \begin{definition}[Exhaustive family] The family of $T$-towers $\left\{ J_i, T(J_i),\ldots, T^{r_i-1}(J_i)\right\}$, $1\le i\le n$, is \textit{exhaustive} if
  all the floors are pairwise disjoint and
  $I{\big\backslash}\bigcup_{i=1}^n \bigcup_{k=0}^{r_i-1}T^k(J_i)$ is a finite set.
 \end{definition}

  \begin{corollary}\label{xyf} Let $T:I\to I$ be an $n$-IET and $I'\subset I$ be an admissible interval for $T$. Suppose that
 \begin{equation*}
 \leqno{\phantom{aa}(H3)} \,\,\sum_{i=1}^n r_i \vert J_i\vert=\vert I\vert,
 \end{equation*}
 then the family of $T$-towers $\{ J_i$, $T(J_i),\ldots,T^{r_{i}-1}(J_i)\}$, $1\le i\le n$, in Proposition \ref{277}, is exhaustive.
  \end{corollary}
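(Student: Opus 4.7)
The plan is short because Proposition \ref{277} has already done the heavy lifting: it gives that the floors $T^k(J_i)$, $0\le k\le r_i-1$, $1\le i\le n$, are pairwise disjoint open subintervals of $I$. So verifying exhaustiveness reduces to showing that the set $E := I\setminus\bigcup_{i=1}^n\bigcup_{k=0}^{r_i-1} T^k(J_i)$ is finite.

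My first step will be a length computation. By item (A3) of Proposition \ref{277}, each floor $T^k(J_i)$ is contained in some interval of continuity $(x_{i_k-1},x_{i_k})$ of $T$, on which $T$ acts as an isometry. So every floor of the $i$-th tower has the common length $|J_i|$, and the finite pairwise disjoint union
\[
U := \bigcup_{i=1}^n\bigcup_{k=0}^{r_i-1} T^k(J_i)
\]
has total Lebesgue measure $\sum_{i=1}^n r_i|J_i|$, which equals $|I|$ by hypothesis (H3). Hence $E$ has Lebesgue measure zero.

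My second step will convert \emph{measure zero} into \emph{finite}. I will write $U=\bigsqcup_{j=1}^N (a_j,b_j)$ as a finite disjoint union of $N=\sum_{i=1}^n r_i$ open intervals in $[0,a]$, ordered so that $0\le a_1<b_1\le a_2<b_2\le\cdots\le a_N<b_N\le a$. Then $E$ is contained in the union of the endpoint set $\{0,a,a_1,b_1,\ldots,a_N,b_N\}$ with the ``gap intervals'' $[b_j,a_{j+1}]$ (together with $[0,a_1]$ and $[b_N,a]$). If any such gap had positive length, we would get $|U|<|I|$, contradicting the first step. Thus every gap collapses to a single point, and $E$ consists of at most $2N+2$ points.

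I do not anticipate any real obstacle here: all the substantive work—the tower structure, its compatibility with the continuity intervals of $T$, and the pairwise disjointness across distinct towers—is packaged in Proposition \ref{277}, while (H3) is precisely the length-matching condition that forces the complement of a finite disjoint union of open intervals in $[0,a]$ to be finite.
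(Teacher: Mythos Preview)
Your argument is correct and follows essentially the same approach as the paper: by Proposition \ref{277} the floors are pairwise disjoint open intervals, so the complement $E$ is a finite union of closed intervals which, by the length computation from (H3), has Lebesgue measure zero and hence is finite. The paper's proof is simply a one-line compression of exactly this reasoning.
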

  \begin{proof} In fact, in this case, by Proposition \ref{277}, $S=I{\big\backslash}\bigcup_{i=1}^n \bigcup_{k=0}^{r_i-1}T^k(J_i)$ is the union of finitely many compact intervals and has Lebesgue measure zero, which implies that $S$ is a finite set.
\end{proof}

\begin{corollary}\label{48} Let $T:I\to I$ be an $n$-IET and $I'\subset I$ be an admissible interval for $T$ such that $(H3)$ holds.
If $T'$ is topologically transitive, so is $T$.
\end{corollary}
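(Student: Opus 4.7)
The plan is to lift a dense $T'$-orbit in $I'$ to a dense $T$-orbit in $I$ by using the tower structure provided by Corollary \ref{xyf}. Let $x\in I'$ be a point whose $T'$-orbit is dense in $I'$, and set $r=r(x)$-iterates $T^{N_m}(x)\in I'$ where $N_m$ is the sum of the successive return times. The goal is to show that the full $T$-orbit $O_T(x)=\{x,T(x),T^2(x),\ldots\}$ is dense in $I$.

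First I would invoke (H3) and Corollary \ref{xyf} to obtain the exhaustive family of $T$-towers $\{J_i,T(J_i),\ldots,T^{r_i-1}(J_i)\}$ for $1\le i\le n$. The key structural facts to exploit are: (i) the floors are pairwise disjoint and cover $I$ up to a finite set $S$; (ii) by (A3), each floor $T^k(J_i)$ lies inside a single continuity interval $(x_{i_k-1},x_{i_k})$ of $T$, so the map $T^k\colon J_i\to T^k(J_i)$ is a continuous isometry (the composition $T_{i_{k-1}}\circ\cdots\circ T_{i_0}$ from Corollary \ref{xy17}); (iii) each $J_i$ is an open subinterval of $I'$.

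Next, given an arbitrary $y\in I\setminus S$, I would locate the unique pair $(i,k)$ with $0\le k\le r_i-1$ such that $y\in T^k(J_i)$, and define $z\in J_i$ by $z=(T^k|_{J_i})^{-1}(y)$, which is well defined because $T^k|_{J_i}$ is a bijective isometry onto the floor. Since $z\in J_i\subset I'$ and the $T'$-orbit of $x$ is dense in $I'$, I can select a subsequence $N_m\to\infty$ with $T^{N_m}(x)\to z$. Because $J_i$ is \emph{open}, eventually $T^{N_m}(x)\in J_i$, so by the continuity of $T^k$ on $J_i$ we obtain
\[
T^{N_m+k}(x)=T^k\bigl(T^{N_m}(x)\bigr)\longrightarrow T^k(z)=y.
\]
Thus every point of $I\setminus S$ lies in the closure of $O_T(x)$. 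Since $S$ is finite and $I\setminus S$ is dense in $I$, the orbit $O_T(x)$ is dense in $I$, establishing topological transitivity of $T$.

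The main delicacy to be careful about is the interplay between openness and continuity in step three: one must use openness of $J_i$ (not merely $z\in\overline{J_i}$) to guarantee that the approximants $T^{N_m}(x)$ eventually enter the floor where $T^k$ is continuous, because $T^k$ may be genuinely discontinuous at the endpoints of $J_i$. Fortunately (A3) gives exactly the needed property, so the argument goes through for every $y$ outside the finite exceptional set $S$, and density in $I$ follows automatically.
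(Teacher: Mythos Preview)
Your proposal is correct and follows essentially the same approach as the paper: invoke Corollary~\ref{xyf} to get the exhaustive tower decomposition, use (A3) to see that $T^k|_{J_i}$ is an isometry onto its floor, and then lift a dense $T'$-orbit through the towers to a dense $T$-orbit in $I$. The paper compresses this into three sentences, while you have carefully spelled out the $\varepsilon$-approximation step (including the important observation that openness of $J_i$ is needed so the approximants eventually land where $T^k$ is continuous), but the underlying argument is the same.
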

\begin{proof} By Corollary \ref{xyf},
$I{\setminus}\bigcup_{i=1}^n\bigcup_{k=0}^{r_i-1} T^k(J_i)$ is a finite set. Moreover, by (A3) of Proposition \ref{277},
$x\in J_i\mapsto T^k(x)\in T^k(J_i)$ is an isometry for every $0\le k\le r_i-1$ and $1\le i\le n$. Since $I'$ is the closure of $\cup_{i=1}^n J_i$, any $T'$-orbit dense in $I'$ corresponds to a $T$-orbit dense in $I$.
\end{proof}
\subsection{Self-similar IETs}\rule[-5pt]{0pt}{5pt}\\

Let $I'\subset I$ be an admissible interval for $T$. By Corollary \ref{xy17}, the Poincar\'e map $T':I'\to I'$ is an $n'$-IET with set of discontinuities $\mathcal{D}(T')\subset \{x_1',\ldots,x_{n-1}'\}$. 
\begin{definition}[self-similar IET] Let $T:I\to I$ be an $n$-IET and $I'\subset I$ be an admissible interval for $T$. We say that \textit{$T$ is self-similar on $I'$} if 
$T'=L\circ T\circ L^{-1}$ on $I'{\setminus\{x_0',\ldots,x_n'\}}$, where $L:I\to I'$ is the affine bijection $x\mapsto \frac{a'}{a}x$. 
\end{definition}
In other words, $T$ is self-similar on $I'$ if
 $\mathcal{D}(T')= \{x_1',\ldots,x_{n-1}'\}$ and 
$T'$ is a rescaled copy of $T$. In particular, we have that $\mathcal{D}(T')=L\big(\mathcal{D}(T)\big)$.  

Denote by $\mathcal{A}^*$ the set of (finite) words over the alphabet $\mathcal{A}=\{1,2,\ldots,n\}$. By $(A3)$ in Proposition \ref{277}, to the pair $(T,I')$, we can associate the map $\sigma:\mathcal{A}\to \mathcal{A}^*$
 defined by $\sigma(i)=i_0i_1,\ldots i_{r_i-1}$ called the \textit{substitution associated with} $(T,I' )$. In this way, the substitution $\sigma$ assigns to each letter $i\in\mathcal{A}$, the symbolic itinerary of the $T$-tower $\{ J_i$, $T(J_i),\ldots,T^{r_{i}-1}(J_i)\}$. By means of the concatenation operation, we can consider ${\sigma}$ as a self-map of
$\mathcal{A}^*$. The \textit{matrix associated with $(T,I')$} is the $n\times n$ matrix $M$ associated with ${\sigma}$, whose $j,i$-entry is
\begin{equation}\label{mabc}
m_{ji}=\#\{k: {\sigma}(i)_k=j\},
\end{equation}
where $\#$ denotes the cardinality of the set. Notice that $m_{ji}$ is the number of times that the $T$-orbit of the interval
$J_i=(x'_{i-1},x_i')$ visits the interval $(x_{j-1},x_j)$ before return to intersect $I'$. In particular, we have that
\begin{equation}\label{ri}
r_i=\sum_{j=1}^n m_{ji}.
\end{equation} 
In what follows, we denote by $m^{(k)}_{ji}$ the $j,i$-entry of $M^k$. Moreover, $J_i$ and $r_i$ are as in the statement of Proposition \ref{277}.

 \begin{proposition}\label{xu} Let $T:I\to I$ be an $n$-IET self-similar on some admissible interval $I'\subset I$ in such a way that $(H3)$ holds. Given $k\ge 1$, let $J_i^{(k)}=L^{k-1}(J_i)$ for all $1\le i\le n$. Then 
\begin{equation}\label{floors}
\Big\{J_i^{(k)}, T\big(J_i^{(k)}\big),\ldots,T^{(r_i^{(k)}-1)}\big(J_i^{(k)}\big)\big\}, \quad 1\le i\le n,
\end{equation}
is an exhaustive family of $T$-towers, where 
 $r_i^{(k)}=\sum_{j=1}^n m^{(k)}_{ji}$.
  \end{proposition}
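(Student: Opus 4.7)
The plan is to proceed by induction on $k$. The base case $k=1$ is immediate: $J_i^{(1)}=L^0(J_i)=J_i$ and, by $(\ref{ri})$, $r_i^{(1)}=\sum_j m_{ji}^{(1)}=\sum_j m_{ji}=r_i$, so Proposition \ref{277} gives the tower structure (A1)--(A3) while Corollary \ref{xyf}, applicable because $(H3)$ is assumed, yields exhaustiveness.

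For the inductive step, the strategy is to exploit that self-similarity propagates. Iterating the conjugation $T'=LTL^{-1}$ shows that the first-return map of $T$ to $I^{(k)}:=L^{k-1}(I')$ is $T^{(k)}:=L^{k-1}\circ T\circ L^{-(k-1)}$, so $T$ is again self-similar on $I^{(k+1)}:=L(I^{(k)})$ viewed inside $I^{(k)}$ with the same substitution $\sigma$. Transporting via $L^{k-1}$, one concludes that $I^{(k+1)}$ is admissible for $T$, that the substitution associated with $(T,I^{(k+1)})$ is $\sigma^{k+1}$ (one first-return to $I^{(k+1)}$ under $T^{(k)}$ is described by $\sigma$, and each $T^{(k)}$-step unfolds into a first-return of $T$ to $I^{(k)}$, which is again described by $\sigma$), and consequently the associated matrix is $M^{k+1}$.

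Once this is in place, Proposition \ref{277} applied to $T$ with admissible interval $I^{(k+1)}$ produces $T$-towers whose bases are exactly $J_i^{(k+1)}=L^k(J_i)$ and whose heights equal the length of the word $\sigma^{k+1}(i)$, which by $(\ref{mabc})$ and $(\ref{ri})$ is $\sum_j m_{ji}^{(k+1)}=r_i^{(k+1)}$. Exhaustiveness then follows from Corollary \ref{xyf} provided $(H3)$ holds at level $k+1$, namely $\sum_i r_i^{(k+1)}|J_i^{(k+1)}|=|I|$. This is a short calculation from $|J_i^{(k+1)}|=(a'/a)|J_i^{(k)}|$, the recurrence $r_i^{(k+1)}=\sum_p m_{pi}\,r_p^{(k)}$ (from $M^{k+1}=M^k M$), and the inductive hypothesis $\sum_p r_p^{(k)}|J_p^{(k)}|=|I|$.

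I expect the main obstacle to be the book-keeping underlying the claim that the substitution for $(T,I^{(k+1)})$ is $\sigma^{k+1}$: one must show that a $T$-orbit starting in $J_i^{(k+1)}$ and returning to $I^{(k+1)}$ decomposes into successive blocks, each one being a first-return orbit of $T$ to the coarser interval $I^{(k)}$, with the sequence of blocks recorded by the $T^{(k)}$-itinerary of the starting point. Once this unwinding is made explicit, all the index manipulations fall into place and the induction closes.
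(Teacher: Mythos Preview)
Your inductive scheme is correct and is essentially the paper's argument; the unwinding you flag as the crux is exactly what the paper calls ``translating this in terms of $T$''. Two small points. First, an index slip: the first-return map of $T$ to $I^{(k)}=L^{k-1}(I')$ is $L^{k}\circ T\circ L^{-k}$, not $L^{k-1}\circ T\circ L^{-(k-1)}$ (for $k=1$ your formula gives $T$ rather than $T'=LTL^{-1}$); this is harmless once corrected. Second, the paper's framing is slightly leaner than yours: rather than re-invoking Proposition~\ref{277} and Corollary~\ref{xyf} at level $k{+}1$ (which obliges you to check that $I^{(k+1)}$ is admissible for $T$ and that $(H3)$ holds there), the paper transports the level-$k$ exhaustive $T$-towers through the conjugacy $L$ to obtain an exhaustive family of $T'$-towers on $I'$ with bases $J_i^{(k+1)}=L(J_i^{(k)})$ and heights $r_i^{(k)}$, and then unfolds each $T'$-step into its $r_p$ constituent $T$-steps (where $p$ records which $J_p\subset I'$ the current $T'$-floor sits in). Exhaustiveness is then immediate, since the level-$(k{+}1)$ floors refine the level-$1$ floors. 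Incidentally, your $(H3)$ computation needs one ingredient beyond the three you list: the identity $\sum_i m_{pi}\,|J_i^{(k)}|=(a/a')\,|J_p^{(k)}|$, which is the eigenvector relation $M\boldsymbol{\lambda}=(a/a')\boldsymbol{\lambda}$ coming from exhaustiveness at level~$1$.
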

\begin{proof} By Corollary \ref{xyf}, 
we know that $\{J_i,T(J_i),\ldots, T^{r_i-1}(J_i)\}$, $1\le i\le n$, is an exhaustive family of $T$-towers. Hence, the result is true for  $k=1$ because $J_i^{(1)}=J_i$ and $r_i^{(1)}=r_i$. Since $T$ is self-similar on $I'$, we know that $T'$ is a rescaled copy of $T$. In particular,  by the above, $\big\{L(J_i),T'(L(J_i)),\ldots, {(T')}^{r_i-1}(L (J_i))\big\}$, that is,
 $\big\{J_i^{(2)},T'(J_i^{(2)}),\ldots, {(T')}^{r_i-1}(J_{i}^{(2)})\big\}$, $1\le i\le n$, is an exhaustive family of $T'$-towers. Translating this in terms of $T$, we obtain that  $\big\{J_i^{(2)},T(J_i^{(2)}),\ldots, T^{r_i^{(2)}-1}(J_{i}^{(2)})\big\}$, $1\le i\le n$, is an exhaustive family of $T$-towers, showing that the claim holds for $k=2$. Proceeding likewise, we prove that the claim is true for any $k\ge 1$.  \end{proof}
  
  \begin{corollary}\label{332} Let $T:I\to I$ be an $n$-IET self-similar on some admissible interval $I'\subset I$ in such a way that $(H3)$ holds. Suppose also that the following conditions are satisfied:
  \begin{itemize}
   \item[$(H4)$] \textrm{The matrix $M$ associated with $(T,I')$ is positive},
 \end{itemize}
  then $T$ is topologically transitive.
  \end{corollary}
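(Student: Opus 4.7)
The plan is to prove topological transitivity of $T$ directly: for any two non-empty open subintervals $U, V \subset I$, I will produce $y \in V$ and $N \ge 0$ with $T^N(y) \in U$. By Proposition \ref{xu} together with the self-similarity identity $\vert J_i^{(k)}\vert = (a'/a)^{k-1}\vert J_i\vert$, the collection of level-$k$ floors is, for every $k$, an exhaustive family of pairwise disjoint open intervals of length shrinking geometrically to $0$. For $k$ large enough, both $V$ and $U$ contain a level-$k$ floor; say $T^s(J_i^{(k)}) \subset V$ and $T^t(J_j^{(k)}) \subset U$ for some indices $i,j$ and offsets $s,t$.

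The heart of the argument is symbolic. Iterating the self-similarity relation $T' = L\circ T\circ L^{-1}$ shows that the Poincar\'e map of $T$ on $L^k(I)$ is conjugate to $T$ via $L^k$, and from this one deduces (by induction on $K$) that the sequence of visits of the level-$(k+K)$ tower with base $J_{i''}^{(k+K)}$ to the level-$k$ bases $J_\ell^{(k)}$ is exactly the word $\sigma^K(i'') \in \mathcal{A}^*$, where $\sigma$ is the substitution associated with $(T,I')$ and $\mathcal{A} = \{1,\ldots,n\}$. Since $M$ is positive, $M^K$ is positive for every $K\ge 1$, so every letter of $\mathcal{A}$ occurs in $\sigma^K(i'')$. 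Moreover, writing $\sigma^K(i'') = \sigma(\ell_1)\sigma(\ell_2)\cdots \sigma(\ell_{r_{i''}^{(K-1)}})$ and using that each individual block $\sigma(\ell_q)$ contains both $i$ and $j$ (again by $M>0$), one can locate positions $P < P'$ in $\sigma^K(i'')$ with $P$-th letter equal to $i$ and $P'$-th letter equal to $j$, with the gap $P'-P$ arbitrarily large once $K$ is large enough. Translating back gives $T^P(J_{i''}^{(k+K)})\subset J_i^{(k)}$ and $T^{P'}(J_{i''}^{(k+K)})\subset J_j^{(k)}$. Picking any $z\in J_{i''}^{(k+K)}$ and choosing $K$ so that $N:=P'-P+t-s\ge 0$, the point $y := T^{s+P}(z) \in T^s(J_i^{(k)})\subset V$ satisfies $T^N(y)=T^{P'+t}(z)=T^t(T^{P'}(z))\in T^t(J_j^{(k)})\subset U$.

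The main obstacle is the middle step: verifying rigorously that the visits of a level-$(k+K)$ tower to level-$k$ bases are encoded by $\sigma^K$ (an induction using $k$-fold self-similarity and the fact that the Poincar\'e maps factor through one another) and arranging an $i$-before-$j$ occurrence with sufficiently large gap to absorb the possibly-negative offset $t-s$. The block decomposition $\sigma^K = \sigma\circ \sigma^{K-1}$ combined with the column-positivity of $M$ handles both issues cleanly.
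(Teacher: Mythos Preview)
Your argument is correct, but it takes a more elaborate symbolic route than the paper's proof. Both proofs begin the same way: use Proposition~\ref{xu} and the geometric shrinking $|J_i^{(k)}|=(a'/a)^{k-1}|J_i|$ to find, for $k$ large, level-$k$ floors inside $U$ and $V$. From there the paper goes up just \emph{one} level: by positivity of $M$, every level-$(k{+}1)$ base $J_{i'}^{(k+1)}$ visits \emph{every} level-$k$ floor, so one can choose $i',j'$ and offsets $\ell_U,\ell_V$ with $T^{\ell_U}(J_{i'}^{(k+1)})\subset U$, $T^{\ell_V}(J_{j'}^{(k+1)})\subset V$, and $T^{r_{i'}^{(k+1)}}(J_{i'}^{(k+1)})\cap J_{j'}^{(k+1)}\neq\emptyset$; the composite exponent $m=r_{i'}^{(k+1)}+\ell_V-\ell_U$ is then automatically $\ge 1$ because $\ell_U\le r_{i'}^{(k+1)}$, and one finishes with Birkhoff's Transitivity Theorem. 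Your approach instead iterates self-similarity $K$ times, encodes visits by the substitution word $\sigma^K(i'')$, and must separately arrange the gap $P'-P$ large enough to force $N\ge 0$. This works, but the extra $K$ levels and the sign-of-$N$ bookkeeping are avoidable: the paper's one-step return-to-base trick absorbs the offset automatically. One small point to tighten in your write-up: the positions $P,P'$ in $\sigma^K(i'')$ index iterates of the \emph{induced} map on the level-$k$ base region, not raw iterates of $T$; when you ``translate back'' you are really replacing them by the corresponding (larger) $T$-iterate counts $\tilde P<\tilde P'$, and it is those that should appear in the expressions $T^{\tilde P}(J_{i''}^{(k+K)})\subset J_i^{(k)}$ and $N=\tilde P'-\tilde P+t-s$. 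This is implicit in your sketch but should be stated.
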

  \begin{proof} Let $k\ge 1$ be given. For each $1\le i\le n$, let $J_i^{(k)}=L^{k-1}(J_i)$ be as in $(\ref{floors})$, where
  $L:I\to I'$ is the affine bijection $x\in I\mapsto \frac{a'}{a} x\in I'$. Let
  $$ \mathscr{P}_k=\big\{T^{\ell}(J_i^{(k)}): 0\le \ell\le r_i^{(k)}-1,\, 1\le i\le n\big\}.
  $$
  Then, by Proposition \ref{xu}, the union of the intervals in $\mathscr{P}_k$ is equal to $I$ up to finitely many points.  Moreover, by (H4), each interval
  $J_i^{(k+1)}$
   visits all the intervals in $\mathscr{P}_k$ before to return to intersect $\bigcup_{i=1}^n J_i^{(k+1)}$. Now let $U,V\subset I$ be open intervals. Since $\max_{J\in\mathscr{P}_k} \vert J\vert\to 0$ as $k\to \infty$, by taking $k$ large enough, we may assume that there exist intervals $J_U,J_V\in\mathscr{P}_{k}$ such that $J_U\subset U$ and $J_V\subset V$. Moreover, by the above, there exist $1\le i,j\le n$, $1\le \ell_U\le r_i^{(k+1)}$ and $1\le \ell_V\le r_j^{(k+1)}$ such that $T^{\ell_U}\big(J_i^{(k+1)}\big)\subset J_U\subset U$, $T^{\ell_V}\big(J_j^{(k+1)}\big)\subset J_V\subset V$
    and $T^{r_i^{(k+1)}}\big(J_i^{(k+1)}\big)\cap J_j^{(k+1)}$ is an open interval. In this way, there exists
    $k\ge 0$ such that $T^k(U)\cap V\neq\emptyset$. By Birkhoff's Transitivity Theorem, we have that $T$ has a dense orbit.
        \end{proof}

\section{The isometric model and the proof of Theorem \ref{thmT}}\label{sotr}

The aim of this section is to prove Theorem \ref{thmT}. The key step required to prove Theorem \ref{thmT} is showing that the map $T$ defined in $(\ref{formulaT})$ is topologically transitive. Unfortunately, we cannot apply Corollary \ref{332} directly to $T$ because $T$ is not self-similar. Thus, instead of $T$, we consider the Poincar\'e map $S=T'$ of $T$ on $I'=[0,1]$. More specifically, we will show that $I'$ is an admissible interval for $T$ and that (H3) holds true. Then, by Corollary \ref{48}, $T$ will be topologically transitive if so does $S$. This reduction is very convenient because, as we will show, $S$ is self-similar on the subinterval $\left[0,\frac{1}{\eta}\right]$ of $[0,1]$ and its topological transitivity will follow from Corollary \ref{332}. To conclude that $T$ is minimal we will prove that $T$ has no periodic orbit. These are the forthcoming steps.

In what follows,  let $T:[0,\vert\boldsymbol{\lambda}\vert]\to [0,\vert\boldsymbol{\lambda}\vert]$ be the map defined in
(\ref{formulaT}). Notice that $\mathcal{D}(T)=\{x_1,x_2,x_3\}$, where
$$ x_0=0,\quad x_1=\lambda_1,\quad x_2=\lambda_1+\lambda_2,\quad x_3=\lambda_1+\lambda_2+\lambda_3,\quad x_4=\lambda_1+\lambda_2+\lambda_3+\lambda_4=\vert\boldsymbol{\lambda}\vert.
$$

Some preparatory lemmas are necessary to prove Theorem \ref{thmT}.

 \subsection{Reduction Lemma}
 
\begin{lemma}\label{adi} $I'=[0,1]$ is an admissible interval for $T$. Moreover, the Poincar\'e map $T':I'\to I'$ is given by 
\begin{equation*}\label{SS'}
T'(x)=\begin{cases}
-x +\lambda_1+\lambda_3=-x-\nu_2+1 & \textrm{if}\quad x\in [x_{0}',x_1')\\
\phantom{-} x + \lambda_3  = x -\nu_1-\nu_2+1 & \textrm{if}\quad x\in [x_{1}',x_2']\\
\phantom{-} x +\lambda_2+\lambda_3-\vert\boldsymbol{\lambda}\vert = x-\nu_1-\nu_2 & \textrm{if}\quad x\in (x_{2}',x_3')\\
-x+\lambda_1+\lambda_2+\lambda_3=-x+\nu_3+1 & \textrm{if}\quad x\in [x_{3}',x_4']
\end{cases},
\end{equation*}
where $$ x_0'=0,\quad x_1'=\nu_1, \quad x_2'=\nu_1+\nu_2, \quad x_3'=\nu_1+\nu_2+\nu_3,\quad x_4'=1,
$$
and $\mathcal{D}(T')=\{x_1',x_2',x_3'\}$.

\end{lemma}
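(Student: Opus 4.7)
The plan is to verify Lemma~\ref{adi} by direct computation of the first-return map. The key numerical facts I would extract first from $\boldsymbol{\lambda}=Q\boldsymbol{\nu}$ and $\nu_1+\nu_2+\nu_3+\nu_4=1$ are the identities $\lambda_1=\nu_1$, $\lambda_2=\nu_2+\nu_3$, $\lambda_3=\nu_3+\nu_4$, $\lambda_4=\nu_2$ and $|\boldsymbol{\lambda}|=1+\nu_2+\nu_3$, which imply in particular $I'=[0,1]\subsetneq I$, while the excess $[1,|\boldsymbol{\lambda}|]$ has length $\nu_2+\nu_3$. Using these, it is easy to locate each candidate point relative to the defining partition $I_1,\dots,I_4$ of $T$; for example $x_2=\lambda_1+\lambda_2=1-\nu_4$, $x_3=\lambda_1+\lambda_2+\lambda_3=1+\nu_3$, so $\mathcal{D}(T)=\{\nu_1,\,1-\nu_4,\,1+\nu_3\}$, only the first of which already lies in $I'$.

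First I would establish admissibility by computing, for each $i\in\{1,2,3,4\}$, the $T$-orbit of $x_i'$ until it re-enters $I'$, using right-continuity at the $T$-discontinuities to resolve boundary cases. A short calculation gives $T(x_1')=T(\nu_1)=|\boldsymbol{\lambda}|$ and $T^2(x_1')=\lambda_1+\lambda_3=1-\nu_2\in I'$; $T(x_2')=1+\nu_3$ and $T^2(x_2')=1\in I'$; $T(x_3')=\nu_3+\nu_4\in I'$; $T(x_4')=T(1)=\nu_3\in I'$. Thus $N(x_i')<\infty$ for all $i$, and the set $B$ assembled from these orbit segments contains $\{\nu_1,\,1-\nu_4,\,1+\nu_3\}=\mathcal{D}(T)$, which is condition (H1). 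Condition (H2) holds because $1=T(1+\nu_3)\in T(B)$. So $I'=[0,1]$ is admissible.

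Next I would compute $T'$ piece by piece. On $[x_0',x_1')=[0,\nu_1)\subset I_1$ one application of $T$ lands in $[\nu_3+\nu_4,\nu_1+\nu_3+\nu_4]\subset I'$, so $T'(x)=-x+\lambda_1+\lambda_3$. On $[x_1',x_2')\subset I_2$, $T$ sends this piece into $I_4$ (after the first hop one lands in $(1+\nu_3,1+\nu_2+\nu_3]$), so a second application of $T$ gives $T^2(x)=x+1-\nu_1-\nu_2=x+\lambda_3$, which lies in $[1-\nu_2,1)\subset I'$. On $(x_2',x_3')\subset I_2$, $T$ sends the piece into $(1,1+\nu_3)\subset I_3$, and the second application yields $T^2(x)=x-\nu_1-\nu_2$, which lies in $(0,\nu_3)\subset I'$. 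Finally $[x_3',x_4']\subset I_3$ and $T$ sends it directly into $[\nu_3,\nu_3+\nu_4]\subset I'$, giving $T'(x)=-x+\lambda_1+\lambda_2+\lambda_3$. These four formulas coincide with the ones asserted in the statement after the substitutions $\lambda_i\mapsto\nu_j$.

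Finally, to identify $\mathcal{D}(T')$, I invoke Corollary~\ref{xy17} for the inclusion $\mathcal{D}(T')\subset\{x_1',x_2',x_3'\}$, then rule out removable discontinuities by checking one-sided limits at each candidate. Comparing consecutive branches, the jumps are $\nu_1$, $1$, and $\nu_4$ at $x_1',x_2',x_3'$ respectively, all nonzero since $\boldsymbol{\nu}$ has strictly positive entries as the Perron--Frobenius eigenvector. The only real obstacle is bookkeeping: one must be attentive whenever an iterate lands on a $T$-discontinuity (which happens at $T(x_1')=|\boldsymbol{\lambda}|$ and $T(x_2')=1+\nu_3=x_3$), and interpret it using the right-continuous convention so that the orbit stays consistent with the half-open partition used to define $T$.
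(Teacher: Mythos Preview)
Your proposal is correct and follows essentially the same route as the paper: verify admissibility by tracking the $T$-orbits of the $x_i'$ until they return to $I'$, check (H1)--(H2), and then read off $T'$ on each subinterval via Corollary~\ref{xy17}. The paper carries this out with numerical tables (Tables~1 and~2 in the Appendix), while you do it symbolically using the identities $\lambda_1=\nu_1$, $\lambda_2=\nu_2+\nu_3$, $\lambda_3=\nu_3+\nu_4$, $\lambda_4=\nu_2$ coming from $\boldsymbol{\lambda}=Q\boldsymbol{\nu}$; this is cleaner and avoids any appeal to floating-point approximations, and you also supply the explicit jump computation confirming $\mathcal{D}(T')=\{x_1',x_2',x_3'\}$, which the paper leaves implicit.

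One small slip to fix: you write that among $\mathcal{D}(T)=\{\nu_1,\,1-\nu_4,\,1+\nu_3\}$ ``only the first'' lies in $I'=[0,1]$, but in fact $x_2=1-\nu_4=\nu_1+\nu_2+\nu_3=x_3'$ also lies in $I'$ (and indeed this is exactly why $x_3'$ appears in $B$ and (H1) holds). Relatedly, your second branch should be stated on the closed interval $[x_1',x_2']$ rather than $[x_1',x_2')$, matching the statement: at $x_2'=\nu_1+\nu_2$ one has $T(x_2')=1+\nu_3=x_3\in I_4$ by right-continuity, so the itinerary $24$ still applies there and $T'(x_2')=x_2'+\lambda_3=1$. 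Neither point affects the substance of your argument.
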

\begin{proof} See the Appendix.
\end{proof}

\begin{lemma}[Reduction Lemma]\label{rl2} If $T'$ is topologically transitive, then so is $T$.
\end{lemma}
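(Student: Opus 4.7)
The plan is to apply Corollary \ref{48} directly, taking $I=[0,|\boldsymbol{\lambda}|]$ and $I'=[0,1]$. Lemma \ref{adi} already supplies two of the three ingredients: it asserts that $I'$ is admissible for $T$, and it produces the explicit partition $0=x_0'<x_1'=\nu_1<x_2'=\nu_1+\nu_2<x_3'=\nu_1+\nu_2+\nu_3<x_4'=1$ whose gaps $J_i=(x_{i-1}',x_i')$ have lengths $\nu_i$. By Proposition \ref{277} applied to the admissible interval $I'$, each $J_i$ has a common first-return time $r_i\ge 1$ and determines a $T$-tower $\{J_i,T(J_i),\dots,T^{r_i-1}(J_i)\}$ whose floors are pairwise disjoint open subintervals of $I$. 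The only remaining step before invoking Corollary \ref{48} is to verify hypothesis (H3), namely
\[
r_1\nu_1+r_2\nu_2+r_3\nu_3+r_4\nu_4\;=\;|\boldsymbol{\lambda}|.
\]

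To establish (H3), I would extract the return times from the admissibility data produced in the proof of Lemma \ref{adi} (in the appendix): by Definition \ref{admint}, the set $B$ is built by iterating each $x_i'$ forward under $T$ until it first re-enters $I'$, and the number of such iterates is precisely $r_i$. Once the $r_i$ are in hand, the identity is a finite linear check using $\boldsymbol{\lambda}=Q\boldsymbol{\nu}$ and the easy consequence $|\boldsymbol{\lambda}|=\nu_1+2\nu_2+2\nu_3+\nu_4=1+\nu_2+\nu_3$. A more conceptual alternative is geometric: by Proposition \ref{277} the floors $T^k(J_i)$ are disjoint open subintervals of $I$, so their complement is a finite disjoint union of closed intervals of total length $|\boldsymbol{\lambda}|-\sum_i r_i\nu_i\ge 0$. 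Showing that this complement has empty interior — which I expect to follow by combining the injectivity of $T$ on $I\setminus\{0\}$ with the admissibility conditions (H1)–(H2), so that any putative gap between consecutive floors would produce a point not in $T(B)\cup\{0\}$ and contradict how $B$ was constructed — would yield (H3) at once.

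With (H3) secured, the result is immediate: Corollary \ref{48} states that under (H3), topological transitivity of the Poincar\'e map $T'$ transfers back to $T$, because any $T'$-orbit dense in $I'$ corresponds, via the isometric identifications $J_i\to T^k(J_i)$ within each tower, to a $T$-orbit that visits every floor and hence is dense in $I$. Thus the Reduction Lemma is essentially a one-line appeal to Corollary \ref{48} once (H3) has been checked. The only genuine obstacle in the proof is the verification of (H3); I would attempt the geometric argument first for elegance and fall back on the explicit endpoint-tracking computation (which is mechanical but bookkeeping-heavy, and is in any case implicit in the appendix proof of Lemma \ref{adi}) if the clean argument does not close.
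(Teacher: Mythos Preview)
Your proposal is correct and matches the paper's proof exactly: the appendix verifies (H3) by reading off the return times $r_1=1$, $r_2=2$, $r_3=2$, $r_4=1$ from Table~\ref{tab11} and computing $\sum_i r_i\nu_i=\nu_1+2\nu_2+2\nu_3+\nu_4=|\boldsymbol{\lambda}|$, then invokes Corollary~\ref{48}. One small correction: the $r_i$ are the return times of the \emph{open} intervals $J_i=(x_{i-1}',x_i')$ (equivalently of their midpoints $c_i$, as in Table~\ref{tab11}), not of the endpoints $x_i'$ themselves---$N(x_i')$ need not equal $r_i$ in general---so your extraction should use Table~\ref{tab11} rather than the endpoint data in Table~\ref{tab10}.
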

\begin{proof} See the Appendix.
 \end{proof}
 
 \subsection{The map $S$}\label{subS}\rule[-5pt]{0pt}{5pt}\\
 
  Let
 $S:[0,1]\to [0,1]$ be the $4$-IET defined by
 
 \begin{figure}[!htb]\vspace{-0.6cm}
   \begin{minipage}{0.55\textwidth}
     
     \noindent  \\[0.0in]
    $\phantom{}S(x)=\begin{cases}
-x - \nu_2 + 1 & \textrm{if}\quad x\in [y_0,y_1)\\
\phantom{-}x-\nu_1-\nu_2+1 & \textrm{if}\quad x\in [y_1,y_2]\\
\phantom{-}x-\nu_1-\nu_2 & \textrm{if}\quad x\in (y_2,y_3)\\
-x+\nu_3+1 & \textrm{if}\quad x\in [y_3,y_4]
\end{cases},$
\\ \\
   \end{minipage}\hfill
   \begin {minipage}{0.45\textwidth}
     \centering
 \begin{tikzpicture}[scale=0.7]



\draw [  thick, ->] (0,0) -- (5.5,0) node [right] {\footnotesize $x$};
\draw [  thick, ->] (0,0) -- (0,5.5) node [below left] {\footnotesize $S(x)$};
	
			

\draw (0,0)--(5,0)--(5,5)--(0,5);
			
\draw[fill=black] (0,3.98026) circle (0.1);
\draw[fill=white] (1.72223, 2.25803) circle (0.1);
\draw[very thick] (0,3.98026)--(1.65, 2.33);

\draw[fill=black] (1.72223,3.98026) circle (0.1);
\draw[fill=black] (2.74197, 5) circle (0.1);
\draw[very thick] (1.72223,3.98026)--(2.67, 4.92);

\draw[fill=white] (2.74197,0) circle (0.1);
\draw[fill=white] (3.27777, 0.535799) circle (0.1);
\draw[very thick] (2.7897,0.05)--(3.22, 0.467);

\draw[fill=black] (3.27777,2.25803) circle (0.1);
\draw[fill=black] (5,0.535799) circle (0.1);
\draw[very thick] (3.27777,2.25803)--(5,0.535799);

\draw [  thick, dotted] (1.72223,0)--(1.72223,5);
\draw [  thick, dotted] (2.74197,0)--(2.74197,5);
\draw [  thick, dotted] (3.27777,0)--(3.27777,5);

\draw [  thick, dotted] (0,0.535799)--(5,0.535799);
\draw [  thick, dotted] (0,2.25803)--(5,2.25803);
\draw [  thick, dotted] (0,3.98026)--(5,3.98026);

\draw (0.861115,0) node[below] {};
 \draw [thick, black,decorate,decoration={brace,amplitude=5pt,mirror},xshift=0pt,yshift=-0.8pt](0,-0.2) -- (1.72223,-0.2) node[black,midway,yshift=-0.4cm] {\footnotesize $\nu_1$};
 \draw[thick, dotted] (0,0)--(0,-0.2);
 \draw[thick, dotted] (1.72223,0)--(1.72223,-0.2);
 
 \draw (2.2321,0) node[below] {};
  \draw [thick, black,decorate,decoration={brace,amplitude=5pt,mirror},xshift=0pt,yshift=-0.8pt](1.72223,-0.2)--(2.74197,-0.2) node[black,midway,yshift=-0.4cm] {\footnotesize $\nu_2$};
   \draw[thick, dotted] (2.74197,0)--(2.74197,-0.2);
  
   \draw (3.00987,0) node[below] {};
    \draw [thick, black,decorate,decoration={brace,amplitude=4pt,mirror},xshift=0pt,yshift=-0.8pt] (2.74197,-0.2) --(3.27777,-0.2) node[black,midway,yshift=-0.4cm] {\footnotesize $\nu_3$};
     \draw[thick, dotted] (3.27777,0)--(3.27777,-0.2);
    
    \draw (4.13888,0) node[below] {};
     \draw [thick, black,decorate,decoration={brace,amplitude=5pt,mirror},xshift=0pt,yshift=-0.8pt] (3.27777,-0.2)--(5,-0.2) node[black,midway,yshift=-0.4cm] {\footnotesize $\nu_4$};
      \draw[thick, dotted] (5,0)--(5,-0.2);

\end{tikzpicture}
   \end{minipage}
\end{figure}\vspace{-1cm}

 where
\begin{equation}\label{theys}
 y_0=x_0'=0,\quad y_1=x_1'=\nu_1, \quad y_2=x_2'=\nu_1+\nu_2, \quad y_3=x_3'=\nu_1+\nu_2+u_3,\quad y_4=x_4'=1.
\end{equation}
 Then $\mathcal{D}(S)=\{y_1,y_2,y_3\}$. In the previous subsection, we proved that $S=T'$. Let $L:[0,1]\to \left[0,\frac{1}{\eta} \right]$ be the map $L(y)=\frac{1}{\eta}y$. Set $y_i'=L(y_i)$, $1\le i\le 4$, then
 $$ y_0'=0, \quad y_1'=\frac{1}{\eta} y_1,\quad y_2'=\frac{1}{\eta} y_2,\quad y_3'=\frac{1}{\eta} y_3,\quad y_4'=\frac{1}{\eta}.
 $$
 The proofs of the next three lemmas are given in the Appendix.
 
 \begin{lemma}\label{aab} $\left[0,\frac{1}{\eta}\right]$ is an admissible interval for $S$.
  \end{lemma}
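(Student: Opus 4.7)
The plan is to verify Definition \ref{admint} directly, using the natural candidate partition of $[0, 1/\eta]$ given by $y_i' = y_i/\eta$ for $i = 0, \ldots, 4$. Since $\mathcal{D}(S) = \{y_1, y_2, y_3\}$ and the right endpoint is $a' = 1/\eta = y_4'$, the task reduces to exhibiting finite first-return times $N(y_i') < \infty$ for $i \in \{1,2,3,4\}$ and then checking the two combinatorial conditions (H1) $\mathcal{D}(S) \subset B$ and (H2) $1/\eta \in S(B)$.

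First I would compute the forward $S$-orbit of each $y_i'$ iterate by iterate, locating every point in one of the four intervals $I_1,I_2,I_3,I_4$ in the domain of $S$ (i.e.\ $[y_0,y_1)$, $[y_1,y_2]$, $(y_2,y_3)$, $[y_3,y_4]$) and applying the corresponding affine branch, stopping at the first index where the iterate re-enters $[0,1/\eta]$. The matrix $P$ tells me what to expect: its $i$-th column records how many visits the orbit of $(y_{i-1}',y_i')$ makes to each $I_j$ before returning, so the column sums $r_1=7$, $r_2=9$, $r_3=15$, $r_4=13$ predict $N(y_i')=r_i$. All $7+9+15+13=44$ iterates are explicit rational combinations of the coordinates of $\boldsymbol{\nu}$, and the Perron--Frobenius identity $P\boldsymbol{\nu}=\eta\boldsymbol{\nu}$ together with $\boldsymbol{\lambda}=Q\boldsymbol{\nu}$ makes the arithmetic close up: each predicted iterate lies in the predicted interval, and the $r_i$-th iterate does land in $[0,1/\eta]$ while no earlier one does.

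Once the four orbits are laid out as lists of $44$ points in total, the two conditions to check become inspection. For (H1), I would locate each discontinuity $y_1,y_2,y_3$ of $S$ inside the computed set $B = \bigcup_{i=1}^{4}\{y_i', S(y_i'), \ldots, S^{r_i-1}(y_i')\}$. For (H2), I would exhibit an element $b \in B$ whose $S$-image is $1/\eta = y_4'$, i.e.\ one of the listed orbits leaves $[0,1/\eta]$ by first mapping to the right endpoint $1/\eta$ under an application of $S$. The main obstacle is entirely bookkeeping rather than conceptual: the difficulty is to carry out the case analysis on $44$ iterates of a four-branch flipping IET without error, which is precisely why the verification is deferred to the Appendix. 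Once completed, Definition \ref{admint} is satisfied and $[0, 1/\eta]$ is an admissible interval for $S$.
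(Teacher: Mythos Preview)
Your approach is exactly the paper's: compute the forward $S$-orbits of $y_1', y_2', y_3', y_4'$ until first return to $[0,1/\eta]$, assemble $B$, and verify (H1) and (H2) by inspection (the paper does this via Table~\ref{tab12}). One slip: your prediction $N(y_i') = r_i$ is incorrect, because the column sums of $P$ give the return time of the \emph{open} interval $(y_{i-1}', y_i')$, not of its right endpoint $y_i'$; the orbits of the endpoints pass through discontinuities of $S$ (e.g.\ $S^2(y_1') = y_1$, $S^8(y_3') = y_3$, $S^5(y_4') = y_2$) and so follow a different itinerary from nearby interior points, yielding $N(y_1')=9$, $N(y_2')=9$, $N(y_3')=13$, $N(y_4')=13$ rather than $7,9,15,13$. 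This does not affect the validity of your plan---the explicit computation would immediately reveal the correct values---only your bookkeeping estimate.
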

  \begin{proof} See the Appendix.
  \end{proof}

\begin{lemma}\label{aac} $S$ is self-similar on $\left[0,\frac{1}{\eta}\right]$.
\end{lemma}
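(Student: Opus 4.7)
The plan is to apply Proposition \ref{277} to the pair $(S, [0, 1/\eta])$, which is legitimate by Lemma \ref{aab}, thereby obtaining for each $i \in \{1,2,3,4\}$ a tower over $(y'_{i-1}, y'_i)$ of height $r_i$ together with a symbolic itinerary $\sigma(i) = i_0 i_1 \ldots i_{r_i-1} \in \{1,2,3,4\}^{r_i}$. The central task is then to verify, by direct iteration of the piecewise formula for $S$, that the associated substitution matrix $M$ with entries $m_{ji} = \#\{k : i_k = j\}$ coincides with the matrix $P$ of the introduction. Concretely, one tracks each initial interval $(y'_{i-1}, y'_i)$ under successive applications of $S$, at every step identifying which of the four pieces $[y_0,y_1)$, $[y_1,y_2]$, $(y_2,y_3)$, $[y_3,y_4]$ contains the current image and applying the corresponding isometric branch, until the first return to $[0, 1/\eta]$.

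Granted that $M = P$, self-similarity follows in two stages. Since the column sums are $r_i = \sum_j P_{ji}$ and $P\boldsymbol{\nu} = \eta \boldsymbol{\nu}$, the Perron identity gives
\[
\sum_{i=1}^4 r_i \cdot \frac{\nu_i}{\eta} \;=\; \frac{1}{\eta}\sum_{j=1}^4 (P\boldsymbol{\nu})_j \;=\; \sum_{j=1}^4 \nu_j \;=\; 1,
\]
which is hypothesis (H3) of Corollary \ref{xyf}. Hence the tower family is exhaustive, and Corollary \ref{xy17} presents $S'$ as a $4$-IET on $[0, 1/\eta]$; distinctness of the four itineraries $\sigma(1),\ldots,\sigma(4)$ then forces $\mathcal{D}(S') = \{y_1', y_2', y_3'\} = L(\mathcal{D}(S))$, matching the discontinuity set of a rescaled copy of $S$. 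To verify the full identity $S' = L\circ S \circ L^{-1}$ on $[0,1/\eta] \setminus \{y'_0,\ldots,y'_4\}$, I would use that each branch $S'|_{(y'_{i-1}, y'_i)} = S_{i_{r_i-1}} \circ \cdots \circ S_{i_0}$ is affine with slope $\pm 1$: both the slope sign (the parity of letters $i_k \in \{1,4\}$ in $\sigma(i)$) and the intercept can be read from the itinerary and compared against $L \circ S|_{I_i} \circ L^{-1}$ by evaluation at any single point.

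The main obstacle is the orbit calculation itself. From the column sums of $P$ one reads $r_1 = 7$, $r_2 = 9$, $r_3 = 15$, $r_4 = 13$, so a full verification requires carrying out $44$ affine compositions symbolically in the components of $\boldsymbol{\nu}$, which are algebraic numbers without closed form. At every step one must confirm that the current image lies strictly inside a single piece of $S$ (crossing no discontinuity) and does not enter $[0, 1/\eta]$ prematurely. Although global consistency is enforced by the Perron identity, the four itineraries must still be produced concretely, which is presumably why the paper obtained $P$ via Rauzy induction in the first place rather than by ad hoc guesswork.
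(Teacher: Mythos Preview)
Your proposal is correct and follows essentially the same route as the paper: track the orbits of $(y'_{i-1}, y'_i)$ under $S$ to obtain the itineraries (the paper does this numerically via the midpoints $c_i$ in Tables~\ref{tab13} and~\ref{tab14}, finding the same heights $7,9,15,13$ and then reading off $M=P$), use Corollary~\ref{xy17} to write $S'$ explicitly as a piecewise isometry in the $\nu_i$'s, and verify $S' = L\circ S\circ L^{-1}$ via the eigenvector relation $\frac{1}{\eta}\boldsymbol{\nu}=P^{-1}\boldsymbol{\nu}$. One minor organizational point: hypothesis~(H3) is not needed for the self-similarity statement itself---the paper defers that check to the proof of Lemma~\ref{Sistt}---but including it here is harmless.
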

\begin{proof} See the Appendix.
  \end{proof}

\begin{lemma}\label{Sistt} $S$ is topologically transitive.
\end{lemma}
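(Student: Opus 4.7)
The strategy is to apply Corollary \ref{332} to the $4$-IET $S$ with the admissible interval $[0, 1/\eta]$. Lemma \ref{aab} gives admissibility and Lemma \ref{aac} gives self-similarity, so what remains is to check the hypothesis (H3) (that the base towers exhaust $[0,1]$ up to a finite set) and the hypothesis (H4) (that the substitution matrix associated with $(S,[0,1/\eta])$ has all entries positive).

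For (H4), the plan is to identify the substitution matrix $M$ of $(S,[0,1/\eta])$ with the matrix $P$ defined at the start of Section 2. By construction, $P$ is exactly the matrix whose $(j,i)$ entry records the number of times the $S$-orbit of the subinterval $J_i$ of $[0,1/\eta]$ visits the base interval $I_j$ of $S$ before returning to $[0,1/\eta]$; this is how $P$ was produced by Rauzy induction. A direct bookkeeping of the four return towers over $J_1,J_2,J_3,J_4$ (whose heights are the column sums $r_1=7$, $r_2=9$, $r_3=15$, $r_4=13$ of $P$) gives $M=P$, and (H4) follows since every entry of $P$ is a positive integer.

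For (H3), I would combine the self-similarity with the Perron--Frobenius eigenvector relation. Since $S$ has length vector $\boldsymbol{\nu}$ and is self-similar with scaling factor $1/\eta$, the base intervals satisfy $|J_i|=\nu_i/\eta$. Using $r_i=\sum_{j=1}^4 m_{ji}$ together with $P\boldsymbol{\nu}=\eta\boldsymbol{\nu}$, one computes
\[
\sum_{i=1}^{4} r_i\, |J_i| \;=\; \frac{1}{\eta}\sum_{i=1}^{4}\sum_{j=1}^{4} m_{ji}\,\nu_i \;=\; \frac{1}{\eta}\sum_{j=1}^{4} (P\boldsymbol{\nu})_j \;=\; \sum_{j=1}^{4} \nu_j \;=\; 1,
\]
which is precisely (H3), since the domain of $S$ has length $1$. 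With (H3) and (H4) in hand, Corollary \ref{332} applies and delivers the topological transitivity of $S$.

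The main obstacle is the explicit verification that $M=P$: one has to follow the $S$-orbit of each $J_i$ and tally its visits to each base interval $I_j$ before it returns to $[0,1/\eta]$. This is the real combinatorial content of the argument and is the step that secretly uses all of the careful choice of $P$ (and of $Q$, via $\boldsymbol{\lambda}=Q\boldsymbol{\nu}$) that came out of the Rauzy induction. Everything else -- the eigenvector calculation in (H3) and the invocation of Corollary \ref{332} -- is then routine.
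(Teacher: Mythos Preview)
Your proposal is correct and follows essentially the same approach as the paper: verify the hypotheses of Corollary~\ref{332} by invoking Lemmas~\ref{aab} and~\ref{aac}, identify the substitution matrix $M$ with $P$ (using the tower itineraries with heights $7,9,15,13$) to obtain (H4), and use the eigenvector relation $P\boldsymbol{\nu}=\eta\boldsymbol{\nu}$ together with $|J_i|=\nu_i/\eta$ to check (H3). The paper carries out the ``bookkeeping'' step you flag by tabulating the itineraries $i_0i_1\ldots i_{r_i-1}$ explicitly (Table~4 in the Appendix), from which $M=P$ is read off directly.
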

\begin{proof} See the Appendix.
\end{proof}

\begin{lemma}\label{Tistt} $T$ is topologically transitive. 
\end{lemma}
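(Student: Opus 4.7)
The proof will be essentially an assembly of results already proved in this section, so it should be very short. The plan is the following.

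First, I would invoke Lemma \ref{adi}, which tells us that $I'=[0,1]$ is an admissible interval for $T$ and gives the explicit formula for the Poincar\'e map $T'$ of $T$ on $I'$; comparing that formula with the definition of $S$ given in Subsection \ref{subS} shows $T'=S$. Second, I would invoke Lemma \ref{Sistt} to conclude that $S$, and hence $T'$, is topologically transitive. Finally, I would apply the Reduction Lemma (Lemma \ref{rl2}) to transfer topological transitivity from $T'$ up to $T$.

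The main content of this statement has therefore been absorbed into Lemmas \ref{adi}, \ref{Sistt} and \ref{rl2}; the only thing left to do here is to chain these three facts together. The potential subtlety, which has already been handled in Lemma \ref{rl2} (whose proof is deferred to the Appendix and which is the analogue in our concrete setting of Corollary \ref{48}), is the verification of hypothesis (H3) for the pair $(T, I')$ --- that is, that the $T$-towers over the subintervals of $I'$ exhaust $[0,|\boldsymbol{\lambda}|]$ up to a finite set. Once this is granted, the isometric correspondence between $T'$-orbits in $I'$ and $T$-orbits in $I$ (provided by property (A3) of Proposition \ref{277}) immediately lifts any dense $T'$-orbit to a dense $T$-orbit, giving topological transitivity of $T$.
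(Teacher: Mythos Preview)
Your proposal is correct and matches the paper's own proof essentially verbatim: the paper invokes Lemma \ref{Sistt} to get that $S$ is topologically transitive, uses the identification $S=T'$ (which, as you note, follows from Lemma \ref{adi}), and then applies Lemma \ref{rl2}. Your additional remarks about (H3) and the lifting mechanism are accurate but already absorbed into the proofs of Lemmas \ref{rl2} and \ref{adi} in the Appendix.
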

\begin{proof} By Lemma \ref{Sistt}, $S$ is topologically transitive. Since $S=T'$, we. have that $T'$ is also topologically transitive. The proof is concluded by applying Lemma \ref{rl2}.
\end{proof}

\begin{proof}[Proof of Theorem \ref{thmT}] The topological dynamics of $n$-IETs is well-understood. In particular, it is known that the domain of $T$ splits into the union of periodic components, minimal components and $T$-connections (see \cite[Theorem 3.2]{ANBPST2013} and \cite[pp. 470-480]{AKBH1995}). By Lemma \ref{Tistt}, $T$ is topologically transitive, thus $T$ has no periodic component and  has a unique minimal component. Moreover, the minimal component is also a quasi-minimal set in the sense that every non-periodic orbit is dense in it. In this way, $T$ will be minimal if we show that $T$ has no periodic orbit. By way of contradiction, suppose that $T$ has a periodic orbit $\gamma$. Then $\gamma$ contains at least one discontinuity of $T$, otherwise there would exist a periodic component containing $\gamma$. In particular, $T$ has a $T$-connection, that is, there exist $k\ge 1$ and $x_i,x_j \in \mathcal{D}(T)$ such that
$T^k(x_i)=x_j$ and $T^\ell(x_i)\not\in \mathcal{D}(T)$ for all $0<\ell<k$. This contradicts the fact that the Poincar\'e map of $T$ on $I'$ is a self-similar $4$-IET. Therefore, $T$ has no periodic orbit, showing that $T$ is minimal.

Now let us prove that $T$ is uniquely ergodic. Since $T$ has no periodic orbit, all the $T$-invariant measures are non-atomic and are supported on an uncountable set. Let $\mu_1,\mu_2$ be two (non-atomic) $T$-invariant Borel probability measures, then $\mu_1'=\frac{1}{\mu_1([0,1])}\mu_1$ and $\mu_2'=\frac{1}{\mu_2([0,1])}\mu_2$ are $S$-invariant Borel probability measures. Moreover, by the proof of Lemma \ref{rl2}, $T$ satisfies (H3) on $[0,1]$, then $\mu_1=\mu_2$ if and only if
$\mu_1'=\mu_2'$. Since $S$ is self-similar on $\left[0,\frac1\eta\right]$, we have that any $S$-invariant Borel probability measure $\mu'$ is determined by the vector $\mathbf{r}=\Big(\mu'\big((y_0,y_1)\big), \mu'(\big(y_1,y_2)\big), \mu'\big((y_2,y_3)\big), \mu'\big((y_3,y_4)\big)\Big)$ which has strictly positive entries, where $y_0,\ldots,y_4$ are as in (\ref{theys}).
 Moreover, since $S$ is self-similar, we have that $\boldsymbol{\nu}$ is the only probability eigenvector of $P$ with strictly positive entries, that is,
$\mathbf{r}=\boldsymbol{\nu}$. This means that the only $S$-invariant measure is the Lebesgue measure, then
$\mu_1'=\mu_2'$ and so $\mu_1=\mu_2$. This proves that $T$ is uniquely ergodic.  \end{proof}

\section{Piecewise contractions and the proof of Theorem \ref{omr}}

In this section, we will prove Theorem \ref{omr}. By Lemma \ref{lmint} and by Theorem \ref{thmT}, all we have to do is to find parameters $d_1,d_2,d_3>0$ such that the map $f_{d_1,d_2,d_3}$ defined in (\ref{fd1d2d3}) is topologically semiconjugate to $T$. The map $f_{d_1,d_2,d_3}$ is a piecewise $\frac12$-affine contraction in the following sense.
\begin{definition}[{piecewise $\frac12$-affine contraction}] A map $f:[0,1]\to [0,1]$ is a \textit{piecewise $\frac12$-affine contraction} if
there exist a partition of $[0,1]$ into intervals $J_1,\ldots,J_n$, numbers $a_1,\ldots,a_n\in\left\{-\frac12,\frac12\right\}$ and $b_1,\ldots b_n\in\mathbb{R}$
 such that $f(x)=a_i x+b_i$ for all $x\in J_i$ $(i=1,2,\ldots,n)$. 
\end{definition}

Our strategy is the following: first we construct a class $\mathscr{C}$ of piecewise $\frac12$-affine contractions topologically semiconjugate to $T$ (Proposition \ref{pro1}). Then we prove there exist $d_1,d_2,d_3>0$ such that
 $f_{d_1,d_2,d_3}\in\mathscr{C}$ (Proposition \ref{pro239}).
 \begin{definition}[The map $g_{\mathbf{u},\ell}$]\label{gredef}
Given vectors $\boldsymbol{u}=(u_1,u_2,u_3,u_4)$ and  $\boldsymbol{\ell}=(\ell_1,\ell_2,\ell_3,\ell_4)$ with positive entries satisfying $\vert\boldsymbol{u}\vert=u_1+u_2+u_3+u_4=1$ and
 $\vert\boldsymbol{\ell}\vert=\ell_1+\ell_2+\ell_3+\ell_4=\frac12$, let $g_{\boldsymbol{u},\boldsymbol{\ell}}:[0,1]\to [0,1]$ be the  piecewise $\frac12$-affine contraction defined by 
\begin{equation}\label{gul}
g_{\boldsymbol{u},\boldsymbol{\ell}}(x)=\begin{cases} -\dfrac{x}{2} +\dfrac{u_1}{2} + \dfrac{u_3}{2} + \ell_1+\ell_2 & \textrm{if} \quad  x\in J_1\\[0.15in]
-\dfrac{x}{2} +\dfrac{u_1}{2} + \dfrac{1}{2}+\ell_1+\ell_2+\ell_3 & \textrm{if} \quad x\in J_2 \\[0.15in]
-\dfrac{x}{2}+\dfrac{u_1}{2}+\dfrac{u_2}{2} + \dfrac{u_3}{2} +\ell_1 & \textrm{if}\quad x\in J_3 \\[0.15in]
-\dfrac{x}{2} + \dfrac{u_1}{2} + \dfrac{u_3}{2} + \dfrac{1}{2}+\ell_1+\ell_2& \textrm{if} \quad x\in J_4
\end{cases},
\end{equation}
where $J_1,J_2,J_3,J_4$ is the partition of $[0,1]$ given by 
$$ J_1=[0,u_1),\quad J_2=[u_1,u_1+u_2),\quad J_3=[u_1+u_2,u_1+u_2+u_3),\quad J_4=[u_1+u_2+u_3,1].
$$
Also let 
$$\mathscr{C}=\left\{g_{\boldsymbol{u},\boldsymbol{\ell}}: u_i,\ell_i>0,\forall i, \sum_{i=1}^4 u_i=1\,\,\textrm{and}\,\,\sum_{i=1}^4\ell_i=\frac12 \right\}.$$
\end{definition}
In what follows, let $T:[0,\vert\boldsymbol{\lambda}\vert]\to [0,\vert\boldsymbol{\lambda}\vert]$ be the isometric model and let $I_1,I_2, I_3, I_4$ be the partition of $[0,\vert\boldsymbol{\lambda}\vert]$ associated with $T$ (see (\ref{formulaT})). We will also keep all the notations and values given in Sections $1$ and $2$. Let
$$p_1=0, \quad p_2=T(\lambda_1+\lambda_2), \quad  p_3=T(\lambda_1+\lambda_2+\lambda_3) \quad\textrm{and}\quad  p_4=\vert\boldsymbol{\lambda}\vert.
$$ 
\begin{lemma}\label{leminv} The $T$-orbits of $p_1,p_2$ and $p_3$ are pairwise disjoint.
\end{lemma}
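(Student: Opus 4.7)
The plan is to argue by contradiction, assuming $T^m(p_i)=T^n(p_j)$ for some $i\neq j$ in $\{1,2,3\}$ and some $m,n\ge 0$, and to derive the existence of a forbidden \emph{$T$-connection} between two points of $\mathcal{D}(T)\cup\{0,\vert\boldsymbol{\lambda}\vert\}$.

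I would begin by recording two elementary facts about $T$ that are immediate from the formula (\ref{formulaT}): (i) $T$ is one-to-one on $(0,\vert\boldsymbol{\lambda}\vert]$, and the \emph{only} pair of distinct points of $[0,\vert\boldsymbol{\lambda}\vert]$ sharing an image is $\{0,\vert\boldsymbol{\lambda}\vert\}$, whose common image is $\lambda_1+\lambda_3$; (ii) the point $0$ is not in the range of $T$ (so $p_1=0$ has no $T$-preimage), and the unique $T$-preimage of $\vert\boldsymbol{\lambda}\vert$ is the discontinuity $x_1=\lambda_1$. Both statements are short branch-by-branch verifications in the four affine pieces of $T$.

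Next, using the identities $p_2=T(x_2)$ and $p_3=T(x_3)$ (which follow directly from $x_2\in I_3$, $x_3\in I_4$), I would iteratively apply $T^{-1}$ to the supposed equality $T^m(p_i)=T^n(p_j)$. By (i), each backward step either collapses the two sides (shortening the relation) or produces the pair $\{0,\vert\boldsymbol{\lambda}\vert\}$; by (ii), the appearance of $\vert\boldsymbol{\lambda}\vert$ forces the preceding iterate on that side to equal $x_1$, and the appearance of $0$ forces that side to be already at step $0$ (since $0\notin\mathrm{Image}(T)$). Tracking this dichotomy, every branch terminates in one of two outcomes: either a direct relation $p_i=T^{\ell}(p_j)$ with $\ell\ge 0$ and $i\neq j$, or a relation of the form $T^k(x)=y$ with $k\ge 1$, $x,y\in\mathcal{D}(T)\cup\{0,\vert\boldsymbol{\lambda}\vert\}$ and $T^{\ell}(x)\notin\mathcal{D}(T)\cup\{0,\vert\boldsymbol{\lambda}\vert\}$ for $0<\ell<k$, i.e.\ a $T$-connection. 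In the first outcome, the case $i=1$ is immediately ruled out by (ii) (giving $0\in\mathrm{Image}(T)$), and the cases $\{i,j\}=\{2,3\}$ factor once more through $p_2=T(x_2)$, $p_3=T(x_3)$ and one application of (i) to collapse into a $T$-connection between $x_2$ and $x_3$ (or to involve $\{0,\vert\boldsymbol{\lambda}\vert\}$, handled as above).

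The proof would then conclude by invoking the non-existence of $T$-connections for $T$, which is exactly the fact already used in the proof of Theorem \ref{thmT}: the self-similarity of $S=T'$ on $\bigl[0,\tfrac{1}{\eta}\bigr]$ (Lemma \ref{aac}) would propagate any hypothetical $T$-connection, via the rescaling $L(y)=y/\eta$ and the Poincar\'e construction of Section \ref{Frm}, to arbitrarily small scales $\eta^{-k}$, which is incompatible with the finiteness of $\mathcal{D}(T)$. I expect the main obstacle to be the bookkeeping in the backward reduction: the single non-injectivity at the endpoints and the asymmetry between $p_1=0$ (with no preimage) and $p_2,p_3$ (as images of discontinuities) force a careful case split, while the renormalization step at the end is identical to the one already deployed for Theorem \ref{thmT}.
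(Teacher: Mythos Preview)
Your proposal is correct and rests on exactly the same two ingredients as the paper's proof: the relation of $p_1,p_2,p_3$ to the discontinuities $x_1,x_2,x_3$ (via $T(x_1)=\vert\boldsymbol{\lambda}\vert$, $T(0)=T(\vert\boldsymbol{\lambda}\vert)$, $p_2=T(x_2)$, $p_3=T(x_3)$, and $0\notin T([0,\vert\boldsymbol{\lambda}\vert])$), and the absence of $T$-connections established in the proof of Theorem~\ref{thmT}. The paper, however, packages this much more directly: rather than running a backward induction on a hypothetical coincidence $T^m(p_i)=T^n(p_j)$, it simply records the containments $O(p_1)\subset\{0\}\cup O(x_1)$, $O(p_2)\subset O(x_2)$, $O(p_3)\subset O(x_3)$ and then observes that injectivity of $T$ on $(0,\vert\boldsymbol{\lambda}\vert]$ together with the absence of $T$-connections forces $O(x_1),O(x_2),O(x_3)$ to be pairwise disjoint, while $0$ has no preimage. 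Your case-by-case backward reduction is a faithful unfolding of the same argument, but the forward containment formulation avoids the bookkeeping you anticipated as the main obstacle.
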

\begin{proof} Denote by $O(x)=\{x,T(x),\ldots\}$  the $T$-orbit of $x\in [0,\vert\boldsymbol{\lambda}\vert]$. By (\ref{formulaT}), $T(\lambda_1)=\vert\boldsymbol{\lambda}\vert$ and $T(0)=T(\vert\boldsymbol{\lambda}\vert)$. Hence, 
$$ O(p_1)\subset \{0\}\cup O(\lambda_1), \quad O(p_2)\subset O(\lambda_1+\lambda_2), \quad O(p_3)\subset O(\lambda_1+\lambda_2+\lambda_3).
$$
In the proof of Theorem \ref{thmT}, we showed that $T$ has no $T$-connection, thus there exists no $T$-orbit that passes through two discontinuities of $T$. This together with the injectivity of $T$ on $(0,\vert\boldsymbol{\lambda}\vert]$ implies that $O(\lambda_1)$, $O(\lambda_1+\lambda_2)$ and $O(\lambda_1+\lambda_2+\lambda_3)$ are pairwise disjoint. Moreover, we have that $0$ has no preimage, which concludes the proof.
  \end{proof}

\begin{proposition}\label{pro1} Let $\boldsymbol{u}=(u_1,u_2,u_3,u_4)$  and $\boldsymbol{\ell}=(\ell_1,\ell_2,\ell_3,\ell_4)$ be vectors with positive entries
 satisfying $\sum_{i=1}^4 u_i=1$ , $\sum_{i=1}^4 \ell_i=\frac12$, and \begin{equation}\label{324}
 \begin{pmatrix} 
u_1 \\ u_2 \\ u_3
\end{pmatrix}=
M
\begin{pmatrix}
 \ell_1 \\ \ell_2 \\ \ell_3
\end{pmatrix}+\frac12\begin{pmatrix}
 c_{14} \\ c_{24} \\ c_{34}
\end{pmatrix},
\end{equation}
then $g=g_{{\boldsymbol{u}},\boldsymbol{\ell}}$ is topologically semiconjugate to $T$. \end{proposition}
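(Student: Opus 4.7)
The plan is to construct $h\colon[0,1]\to[0,|\boldsymbol{\lambda}|]$ as a monotone ``staircase'' whose maximal level sets are closed intervals lying above the forward $T$-orbits of $p_1,p_2,p_3,p_4$.

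First I would rewrite the hypothesis (\ref{324}) in the symmetric form $u_i=\sum_{j=1}^{4}c_{ij}\ell_j$ for every $i\in\{1,2,3,4\}$. This is equivalent to (\ref{324}) because $\{K_{ij}\}_{i=1}^{4}$ partitions $\{0,1,2,\ldots\}$, so $\sum_{i=1}^{4}c_{ij}=\sum_{k\ge 0}2^{-k}=2$; combined with $\sum_j\ell_j=\tfrac12$ and $\sum_i u_i=1$, a short calculation gives the equivalence. In this form, $\sum_j c_{ij}\ell_j$ is exactly the total length of the ``plateaus'' reserved for $T$-orbit points landing in $I_i$, once each $T^k(p_j)$ is assigned a plateau of length $\ell_j/2^k$.

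Next I would construct, for each pair $(j,k)\in\{1,2,3,4\}\times\{0,1,2,\ldots\}$, a closed interval $G_{j,k}\subset[0,1]$ of length $\ell_j/2^k$, placed in $[0,1]$ in the same linear order as the points $T^k(p_j)$ appear in $[0,|\boldsymbol{\lambda}|]$. By Lemma \ref{leminv} the orbits of $p_1,p_2,p_3$ are pairwise disjoint; since $T(0)=T(|\boldsymbol{\lambda}|)=\lambda_1+\lambda_3$, the orbits of $p_1$ and $p_4$ coincide from $k=1$ onward, and in that case I would place the pair $(G_{1,k},G_{4,k})$ consecutively at the shared position. The total gap length is $2\sum_j\ell_j=1$, so the $G_{j,k}$ are pairwise essentially disjoint with union of full measure in $[0,1]$; by the symmetric form of the hypothesis, those with $T^k(p_j)\in I_i$ exactly tile $J_i$ up to a measure-zero Cantor remainder. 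I then set $h\equiv T^k(p_j)$ on $G_{j,k}$ and extend to $[0,1]$ by continuity, yielding a non-decreasing function; continuity and surjectivity follow from the density of $\bigcup_{j,k}\{T^k(p_j)\}$ in $[0,|\boldsymbol{\lambda}|]$, which is a consequence of the minimality of $T$ (Theorem \ref{thmT}).

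Finally, to verify $h\circ g=T\circ h$ it suffices to show $g(G_{j,k})=G_{j,k+1}$ for every $(j,k)$, since then $h(g(x))=T^{k+1}(p_j)=T(h(x))$ on each $G_{j,k}$ and the identity extends by continuity. Because $G_{j,k}\subset J_i$ where $i$ is the index with $T^k(p_j)\in I_i$, the restriction $g|_{G_{j,k}}$ is affine with slope $-\tfrac12$, so $g(G_{j,k})$ is automatically an interval of the correct length $\ell_j/2^{k+1}$. I expect the main obstacle to be the positional bookkeeping: checking that $g(G_{j,k})$ lies in the component $J_{i'}$ with $T^{k+1}(p_j)\in I_{i'}$ and sits exactly in the slot reserved for $G_{j,k+1}$. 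Concretely this reduces to verifying that, for each $i'$, the ordered list of gaps sitting inside $J_{i'}$ agrees with the $g$-image of the ordered gaps in the $J_i$'s that $g$ sends into $J_{i'}$; the intercepts in the piecewise definition (\ref{gul}) of $g_{\boldsymbol{u},\boldsymbol{\ell}}$ are engineered precisely to make this alignment hold.
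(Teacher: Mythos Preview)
Your plan is essentially the paper's own: build the monotone surjection $h$ by assigning a plateau of length $\ell_j/2^k$ to each point $T^k(p_j)$, placed in order, and then check the semiconjugacy on plateaus. The rewriting of (\ref{324}) as $u_i=\sum_j c_{ij}\ell_j$ is exactly how the paper finishes as well. However, the step you flag as ``positional bookkeeping'' and then leave unproven is precisely the crux, and your proposed direct verification of $g(G_{j,k})=G_{j,k+1}$ would force you to know the exact position of each plateau, i.e.\ the infinite sums $\sum_{q<T^k(p_j)}\phi(q)$, which is not tractable.

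The paper avoids this by reversing the logic. Instead of starting from $g_{\boldsymbol{u},\boldsymbol{\ell}}$ and checking where it sends each plateau, it \emph{defines} a map $\widehat g$ so that $\widehat g\vert_{G_p}\colon G_p\to G_{T(p)}$ is affine of slope $-\tfrac12$; the semiconjugacy $h\circ\widehat g=T\circ h$ then holds on $\bigcup_p G_p$ by construction. The only thing left is to show that $\widehat g$ is globally affine on each $\widehat J_i=\bigcup_{p\in I_i}G_p$, and this follows from a one-line telescoping count: for $p,q\in I_i$ with $a=\sup G_p<\inf G_q=b$,
\[
\widehat g(a)-\widehat g(b)=\sum_{G_r\subset[a,b]}\bigl|\widehat g(G_r)\bigr|=\tfrac12\sum_{G_r\subset[a,b]}|G_r|=\tfrac12(b-a),
\]
using that $T\vert_{I_i}$ reverses order and that the plateaus have full measure. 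Once $\widehat g$ is known to be affine of slope $-\tfrac12$ on each $J_i$, its intercepts are read off from the obvious boundary conditions ($g(0),g(1),\ldots$), and one recognises $g_{\boldsymbol{u},\boldsymbol{\ell}}$. This is the missing idea you need.

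A secondary point: your scheme of keeping $G_{1,k}$ and $G_{4,k}$ separate for $k\ge 1$ is more delicate than it looks. Since each branch of $g$ reverses orientation, the relative order of the adjacent pair $(G_{1,k},G_{4,k})$ would have to \emph{alternate} with $k$ for $g(G_{j,k})=G_{j,k+1}$ to hold. The paper sidesteps this by merging them into a single plateau $G_{T^k(p_1)}$ of length $(\ell_1+\ell_4)/2^k$; then $\widehat g$ carries $G_{p_1}$ and $G_{p_4}$ onto complementary halves of $G_{T(p_1)}$, and for $k\ge 1$ the map $\widehat g\vert_{G_{T^k(p_1)}}$ is a genuine affine bijection onto $G_{T^{k+1}(p_1)}$.
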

\begin{proof} Let $\boldsymbol{\ell}=(\ell_1,\ell_2,\ell_3,\ell_4)$ be a vector with positive entries such that
$\sum_{i=1}^4 \ell_i=\frac12$. Let $$\mathcal{P}=\left\{T^k(p_i): k\ge 0 \,\,\textrm{and}\,\, 1\le i\le 4\right\}.$$ By Theorem \ref{thmT}, $\mathcal{P}$ is a denumerable dense subset of $[0,\vert\boldsymbol{\lambda}\vert]$. 
Since $T(p_1)=T(p_4)$, we may write
 $\mathcal{P}=\left\{T^k(p_i):k\ge 0\,\,\textrm{and}\,\,1\le i\le 3\right\}\cup \{p_4\}$. Let $\phi:\mathcal{P}\to (0,1)$ be the map defined by $\phi(p_i)=\ell_i$, $1\le i\le 4$, and, for all $k\ge 1$,
$$
\phi\left(T^k(p_1)\right)=\dfrac{\ell_1+\ell_4}{2^k}, \,\,\,\,\phi\left(T^k(p_2)\right)=\dfrac{\ell_2}{2^k},\,\,\,\,\phi\left(T^k(p_3)\right)=\dfrac{\ell_3}{2^k}.
$$
By Lemma \ref{leminv}, $\phi$ is well-defined.  To each $p\in\mathcal{P}$, let $G_p\subset [0,1]$ be the compact interval defined by $G_{p_1}=\left[0,{\ell_1}\right], G_{p_4}=\left[1-{\ell_4}, 1\right]$ and
\begin{equation}\label{GP}
G_p=\left[\sum_{\substack{q<p\\q\in \mathcal{P}}} \phi(q),\,\,\phi(p)+\sum_{\substack{q<p\\q\in \mathcal{P}}} \phi(q)\right]\,\,\,\,\textrm{if}\,\,\,\, p\not\in \{p_1,p_4\}.
\end{equation}
Notice that $G_p$ has length $\vert G_p\vert=\phi(p)$ for all $p\in\mathcal{P}$.
 Hence,
\begin{equation}\label{UGP}
\sum_{p\in \mathcal{P}} \left\vert G_p\right\vert=\sum_{i=1}^4 \ell_i+
 \sum_{k\ge 1} \dfrac{\ell_1+\ell_4+\ell_2+\ell_3}{2^k}=\frac12\left(1+\sum_{k\ge 1} \dfrac{1}{2^k}\right)=1.
\end{equation}
By $(\ref{GP})$ and by the density of $\mathcal{P}$ in $[0,\vert\boldsymbol{\lambda}\vert]$,
we have that $\mathcal{P}$ and $\{G_p\}_{p\in \mathcal{P}}$ share the same ordering meaning that if $p,q\in \mathcal{P}$, then
\begin{equation}\label{p<q}
 p<q \iff \sup G_p < \inf G_q.
\end{equation}
In particular, we have that the intervals $G_p$, $p\in\mathcal{P}$, are pairwise disjoint and, by $(\ref{UGP})$, their union is dense in $[0,1]$.
 
Let $\widehat{h}:\,\cup_{p\in\mathcal{P}} G_p\to [0,\vert\boldsymbol{\lambda}\vert]$ be the function that on $G_p$ takes the constant value $p$. By (\ref{UGP}) and $(\ref{p<q})$, we have that $\widehat{h}$ is nondecreasing and has  dense domain and dense range. Thus, $\widehat{h}$ admits a unique nondecreasing continuous surjective extension $h:\,[0,1]\to [0,\vert\boldsymbol{\lambda}\vert]$. It is elementary to see that
 $h^{-1}\big(\{p\}\big)={G_p}$ for every $p\in\mathcal{P}$. Denote by $J_1, J_2, J_3, J_4$ the partition of $[0,1]$ defined by
 $J_i=h^{-1}(I_i)$, where $I_1,I_2, I_3, I_4$ are as in the definition of the isometric model $T$.
 
 Let $\widehat{g}:\cup_{p\in \mathcal{P}} G_p\to \cup_{p\in\mathcal{P}} G_{T(p)}$ be such that $\widehat{g}\vert_{G_p}{:}\,G_p\to G_{T(p)}$ is an  affine bijection with slope $-\frac12$ for every $p\in\mathcal{P}$. 
 We claim that for each $1\le i\le 4$ , there exist a dense subset $\widehat{J}_i$ of $J_i$ and $b_i\in\mathbb{R}$ such that
\begin{equation}\label{lxb}
\widehat{g}(x)=-\frac12 x+b_i\quad\textrm{for all}\quad x\in {\widehat J}_i\,.
\end{equation}
 Let $1\le i\le 4$, $\widehat{I_i}=I_i\cap \mathcal{P}$, and  $\widehat{J_i}={\cup_{p\in \widehat{I_i}} G_{p}}$, then, by (\ref{UGP}) and (\ref{p<q}), $\widehat{J_i}$ is a dense subset of $J_i$. Moreover, by definition, $\widehat{g}\vert_{G_{p}}{:}\,G_{p}\to G_{T(p)}$ is an  affine bijection with slope $-\frac12$ for all $p\in \mathcal{P}$, thus there exists $c_{p}\in\mathbb{R}$ such that
  \begin{equation}\label{cmkcmk}
   \widehat{g}(x)=-\frac{1}{2} x + c_{p}\quad\textrm{for all}\quad x\in G_{p}\,\,\textrm{and}\,\, p\in\mathcal{P}.
  \end{equation}
Let us prove that $\widehat{g}$ is strictly decreasing on $\widehat{J_i}=\cup_{p\in\widehat{I_i}} G_{p}$. Let  $x<y$ be two points in $\widehat{J_i}$. Since $\widehat{g}$ is already strictly decreasing on each interval $G_{p}$, we may assume that $x\in G_{p}$ and $y\in G_{q}$, where $p,q\in \widehat{I_i}$ are such that $\sup G_p<\inf G_{q}$. By $(\ref{p<q})$, we have that  
  $p<q$ and $\{p,q\}\subset I_i$. Then, since $T'(z)=-1$ for all $z\in I_i$, we have that $T\vert_{I_i}$ is decreasing, thus  $T(p)>T(q)$. By $(\ref{p<q})$ once more, we get $\sup G_{T(q)}<\inf G_{T(p)}$.
   By definition, $\widehat{g}(p)\in G_{T(p)}$ and $\widehat{g}(q)\in G_{T(q)}$, thus $\widehat{g}(p)>\widehat{g}(q)$. This proves that $\widehat{g}$ is decreasing on $\widehat{J_i}$.  It remains to prove that $c_{p}$ in $(\ref{cmkcmk})$ is the same for all $p\in \widehat{I_i}$. Let $p,q\in\widehat{I_i}$ with $p\neq q$. We may assume that $a=\sup {G_{p}}<\inf G_{q}=b$. Notice that since $\widehat{g}$ is decreasing on $\widehat{J_i}$,
   \begin{eqnarray*} 
   \frac12(b-a)-(c_{q}-c_{p})&=&
   -\big(\widehat{g}(b)-\widehat{g}(a)\big)= \sum_{G_{r}\subset [a,b]} \left\vert \widehat{g}\big(G_{r}\big)\right\vert \\ &=&\frac12 \sum_{G_{r}\subset [a,b]} |G_{r}|= \frac12 (b-a),
   \end{eqnarray*}
  yielding $c_{p}=c_{q}$. Thus, $(\ref{lxb})$ is true.
      
  It follows from $(\ref{lxb})$ that $\widehat{g}\vert_{\widehat{J_i}}$ admits a unique monotone continuous extension to the interval $J_i=h^{-1}(I_i)$. This extension is also an affine map with slope equal to $-\frac12$. Since $i$ is arbitrary, we obtain an injective  piecewise $\frac12$-affine extension $g$ of $\widehat{g}$ to the whole interval $[0,1]=\cup_{i=1}^4 J_i$. 
        
  We claim that $h\circ g=T\circ h$. In fact, for every $y\in G_p$, we have that 
\begin{equation}\label{eql}
h\big(g(y)\big)=\widehat{h}\big(\widehat{g}(y)\big)=T(p)=T\big(\widehat{h}(y)\big)=T\big(h(y)\big).
\end{equation}
Hence, $(\ref{eql})$ holds for a dense set of $y\in [0,1]$. By continuity, $(\ref{eql})$ holds for every $y\in [0,1]$. In this way, $g$ is topologically semiconjugate to $T$.

Figure \ref{fig20} gives a geometrical picture of the map $g$.  
All the slopes equal $-\frac{1}{2}$. It is elementary to verify that $g=g_{\mathbf{u},\boldsymbol{\ell}}$, where
 $u_i=\vert J_i\vert$. Thus the formula of $g$ is the one provided in Definition \ref{gredef}.

\begin{figure}[ht]. 
\centering
\begin{tikzpicture}[scale=0.9]



\draw [  thick, ->] (0,0) -- (7,0) node [right] {\footnotesize $x$};
\draw [  thick, ->] (0,0) -- (0,5.5) node [above] {\footnotesize $g_{\boldsymbol{u},\boldsymbol{\ell}}(x)$};
	
			

\draw (0,0)--(5,0)--(5,5)--(0,5);
			
\draw[fill=black] (0,2.45) circle (0.1);
\draw[fill=white] (1.25, 1.825) circle (0.1);
\draw[very thick] (0,2.45)--(1.18, 1.87);

\draw[fill=black] (1.25,4.375) circle (0.1);
\draw[fill=white] (2.5, 3.68643) circle (0.1);
\draw[very thick] (1.33,4.33)--(2.41, 3.75);

\draw[fill=black] (2.5,1.25) circle (0.1);
\draw[fill=white] (3.75, 0.625) circle (0.1);
\draw[very thick] (2.57,1.21)--(3.67, 0.675);

\draw[fill=black] (3.75,3.125) circle (0.1);
\draw[fill=black] (5,2.45) circle (0.1);
\draw[very thick] (3.82,3.09)--(4.9,2.49);

\draw [  thick, dotted] (1.25,0)--(1.25,4.9);
\draw [  thick, dotted] (2.5,0)--(2.5,5);
\draw [  thick, dotted] (3.75,0)--(3.75,5);

\draw [  thick, dotted] (0,1.825)--(5,1.825);
\draw [  thick, dotted] (0,2.45)--(5,2.45);

\draw (0.625,0) node[below] {$J_1$};
 \draw [thick, black,decorate,decoration={brace,amplitude=5pt,mirror},xshift=0pt,yshift=-0.8pt](0,-0.7) -- (1.25,-0.7) node[black,midway,yshift=-0.4cm] {\footnotesize $u_1$};
 \draw[thick, dotted] (0,0)--(0,-0.7);
 \draw[thick, dotted] (1.25,0)--(1.25,-0.7);
 
 \draw (5,0.30) node[right] {$G_{p_1}$};
 \draw [thick, black,decorate,decoration={brace,amplitude=5pt,mirror},xshift=0pt,yshift=0pt](6,0) -- (6,0.625) node[black,midway,xshift=0.4cm] {\footnotesize $\ell_1$};
 \draw[thick, dotted] (5,0)--(6,0);
 \draw[thick, dotted] (0,0.625)--(6,0.625);
 
 \draw (1.875,0) node[below] {$J_2$};
  \draw [thick, black,decorate,decoration={brace,amplitude=5pt,mirror},xshift=0pt,yshift=-0.8pt](1.25,-0.7)--(2.5,-0.7) node[black,midway,yshift=-0.4cm] {\footnotesize $u_2$};
   \draw[thick, dotted] (2.5,0)--(2.5,-0.7);
   
    \draw (5,1.56) node[right] {$G_{p_2}$};
 \draw [thick, black,decorate,decoration={brace,amplitude=5pt,mirror},xshift=0pt,yshift=0pt](6,1.25) -- (6,1.825) node[black,midway,xshift=0.4cm] {\footnotesize $\ell_2$};
 \draw[thick, dotted] (0,1.25)--(6,1.25);
 \draw[thick, dotted] (5,1.825)--(6,1.825);
  
   \draw (3.125,0) node[below] {$J_3$};
    \draw [thick, black,decorate,decoration={brace,amplitude=5pt,mirror},xshift=0pt,yshift=-0.8pt] (2.5,-0.7) --(3.75,-0.7) node[black,midway,yshift=-0.4cm] {\footnotesize $u_3$};
     \draw[thick, dotted] (3.75,0)--(3.75,-0.7);
     
     \draw (5,3.4375) node[right] {$G_{p_3}$};
 \draw [thick, black,decorate,decoration={brace,amplitude=5pt,mirror},xshift=0pt,yshift=0pt](6,3.125) -- (6,3.75) node[black,midway,xshift=0.4cm] {\footnotesize $\ell_3$};
 \draw[thick, dotted] (0,3.125)--(6,3.125);
 \draw[thick, dotted] (0,3.75)--(6,3.75);
    
    \draw (4.375,0) node[below] {$J_4$};
     \draw [thick, black,decorate,decoration={brace,amplitude=5pt,mirror},xshift=0pt,yshift=-0.8pt] (3.75,-0.7)--(5,-0.7) node[black,midway,yshift=-0.4cm] {\footnotesize $u_4$};
      \draw[thick, dotted] (5,0)--(5,-0.7);
     
      \draw (5,4.6875) node[right] {$G_{p_4}$};
 \draw [thick, black,decorate,decoration={brace,amplitude=5pt,mirror},xshift=0pt,yshift=0pt](6,4.375) -- (6,5) node[black,midway,xshift=0.4cm] {\footnotesize $\ell_4$};
 \draw[thick, dotted] (0,4.375)--(6,4.375);
 \draw[thick, dotted] (5,5)--(6,5);
\end{tikzpicture}
\caption{The plot of $g=g_{\boldsymbol{u},\boldsymbol{\ell}}$.}
\label{fig20}
\end{figure}
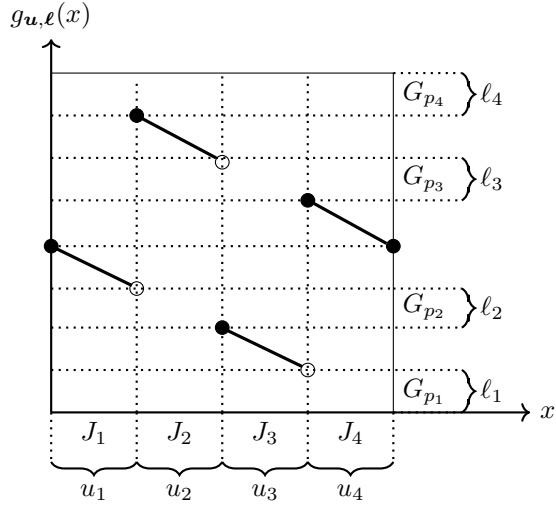
It remains to prove that $\mathbf{u}=(u_1,u_2,u_3,u_4)$ satisfies (\ref{324}). In fact,
$\sum_{i=1}^4 u_i=\sum_{i=1}^4 \vert J_i\vert=1$. Moreover,  we have that
\begin{equation*}
 u_i=\vert J_i\vert=\sum_{G_p\subset J_i}\left\vert G_p\right\vert =\sum_{p\in \mathcal{P}\cap I_i} \phi(p)
 =\sum_{j=1}^4\sum_{k\in K_{ij}} \dfrac{\ell_j}{2^k}=\sum_{j=1}^4 c_{ij} \ell_j.
   \end{equation*}
    Replacing $\ell_4$ by $\frac{1}{2}-\ell_1-\ell_2-\ell_3$ yields, for all $1\le i\le 3$,
 \begin{equation*}
 u_i= \sum_{j=1}^3 (c_{ij} -c_{i4}) \ell_j+\frac12 c_{i4},
 \end{equation*}
 which concludes the proof. 
\end{proof}

\begin{proposition}\label{pro239} Let $\boldsymbol{u}=(u_1,u_2,u_3,u_4)$ be such that
$u_1,u_2,u_3>0$, $u_4=1-u_1-u_2-u_3$,
\begin{equation}\label{vvv}
0< u_1 < \dfrac13, \quad\dfrac13< u_1+u_2 < \dfrac23,\quad \dfrac23<u_1+u_2+u_3<1,
\end{equation}
and let $\boldsymbol{\ell}=(\ell_1,\ell_2,\ell_3,\ell_4)$ be a vector with positive entries
 satisfying $\sum_{i=1}^4  \ell_i=\frac12$. If 
 \begin{equation}\label{M201}
 \begin{pmatrix} 
2 & 2 & 0\\
2 & 2 & 2\\
2 & 0 & 0 \\
\end{pmatrix}
\begin{pmatrix} 
\ell_1 \\ \ell_2 \\ \ell_3
\end{pmatrix}=
\begin{pmatrix} 
-1 & 0 & -1\\
-1 & 0 & 0\\
-1 & -1 & -1 \\
\end{pmatrix}
\begin{pmatrix}
 u_1 \\ u_2 \\ u_3 
\end{pmatrix}+\begin{pmatrix}
1 \\ 1 \\ 1
\end{pmatrix},
\end{equation}
 \begin{equation}\label{y201}
 z_1=u_1,\quad z_2=u_1+u_2, \quad z_3=u_1+u_2+u_3,
 \end{equation}
 and 
 \begin{equation}\label{d201}
 d_1=\frac{1}{3z_1}-1, \quad d_2=\dfrac{2-3z_2}{3z_2-1},\quad d_3=\dfrac{3-3z_3}{3z_3-2},
 \end{equation}
then $g_{{\boldsymbol{u}},\boldsymbol{\ell}}=f_{d_1,d_2,d_3}$, that is, $g_{{\boldsymbol{u}},\boldsymbol{\ell}}$ is the Poincar\'e map of a switched server system.
\end{proposition}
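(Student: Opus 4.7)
The plan is to verify directly that, under the stated hypotheses, the two piecewise $\tfrac12$-affine maps $g_{\boldsymbol{u},\boldsymbol{\ell}}$ and $f_{d_1,d_2,d_3}$ have the same partition of $[0,1]$ into four pieces and the same affine formula on each piece. Since both maps have slope $-\tfrac12$ everywhere, this reduces to matching (i) the three interior break\-points and (ii) the four constant terms, and the conclusion that $g_{\boldsymbol{u},\boldsymbol{\ell}}$ is the Poincar\'e map of a switched server system then follows from Lemma \ref{lmint}.

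First I would check the break\-points. The partition associated with $g_{\boldsymbol{u},\boldsymbol{\ell}}$ in Definition \ref{gredef} has interior break\-points $u_1$, $u_1+u_2$, $u_1+u_2+u_3$, which equal $z_1,z_2,z_3$ by (\ref{y201}). Combining (\ref{y201}) with (\ref{vvv}) gives $z_1\in(0,\tfrac13)$, $z_2\in(\tfrac13,\tfrac23)$, $z_3\in(\tfrac23,1)$, so the numbers $d_1,d_2,d_3$ prescribed by (\ref{d201}) are strictly positive. A short algebraic check shows that (\ref{d201}) is simply the inversion of the three identities (\ref{y123}) of Lemma \ref{lmint}; hence the partition $[z_0,z_1),[z_1,z_2),[z_2,z_3),[z_3,z_4]$ in the definition of $f_{d_1,d_2,d_3}$ coincides with $J_1,J_2,J_3,J_4$.

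Next I would compare the constant terms piece by piece. Equating the intercepts of (\ref{gul}) with those of (\ref{fd1d2d3}) yields, after multiplication by $2$,
\begin{align*}
\text{piece }1{:}\quad & u_1+u_3+2\ell_1+2\ell_2=1,\\
\text{piece }2{:}\quad & u_1+2\ell_1+2\ell_2+2\ell_3=1,\\
\text{piece }3{:}\quad & u_1+u_2+u_3+2\ell_1=1,\\
\text{piece }4{:}\quad & u_1+u_3+2\ell_1+2\ell_2=1,
\end{align*}
so pieces $1$ and $4$ give the same equation and we are left with three independent linear relations between $\boldsymbol{u}$ and $\boldsymbol{\ell}$. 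A direct inspection shows that these three relations are exactly the three rows of the matrix identity (\ref{M201}). Therefore, under the hypotheses of the proposition, the two piecewise affine maps coincide on each of the four subintervals and hence on all of $[0,1]$.

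There is no substantial obstacle in this argument; it is essentially a verification. The only mild point to keep track of is the equivalence between (\ref{d201}) and (\ref{y123}) and the fact that the intercepts collapse to three independent conditions (pieces $1$ and $4$ coincide), which is what makes the $3\times 3$ linear system (\ref{M201}) the right translation of the equality $g_{\boldsymbol{u},\boldsymbol{\ell}}=f_{d_1,d_2,d_3}$. Once this is established, Lemma \ref{lmint} identifies $f_{d_1,d_2,d_3}$ (up to conjugacy by the arc-length parametrization $\varphi$) with the Poincar\'e map $F$ of the switched server system having parameters $d_{ij}$ satisfying (\ref{dij}), completing the proof.
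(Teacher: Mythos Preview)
Your proposal is correct and follows essentially the same approach as the paper, which simply states that the equality $g_{\boldsymbol{u},\boldsymbol{\ell}}=f_{d_1,d_2,d_3}$ can be verified by substituting (\ref{M201}) into (\ref{gul}) and (\ref{y201}), (\ref{d201}) into (\ref{fd1d2d3}). Your write-up spells out this verification in full (matching breakpoints via the inversion of (\ref{y123}) and matching intercepts via the three rows of (\ref{M201})), which is exactly the content the paper leaves to the reader.
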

\begin{proof} By replacing (\ref{M201}) in (\ref{gul}), and (\ref{y201}) and (\ref{d201}) in  (\ref{fd1d2d3}), it can be easily verified that  $g_{{\boldsymbol{u}},\boldsymbol{\ell}}=f_{d_1,d_2,d_3}$.
\end{proof}
\begin{proof}[Proof of Theorem \ref{omr}] The itens (a) and (b) of Theorem \ref{omr} follow immediately from Propositions \ref{pro1}, \ref{pro239} and Theorem \ref{thmT}. Let $\mathbf{v}\in\partial\Delta$.  
It is clear that $\omega_F(\mathbf{v})$ is a closed, therefore compact, set for every $\mathbf{v}\in\partial\Delta$. Since $F:\partial\Delta\to\partial\Delta$ is topologically conjugate to a piecewise contraction $f:[0,1]\to [0,1]$ injective on $(0,1]$, we have that $\omega_F(\mathbf{v})$ has empty interior, hence $\omega_F(\mathbf{v})$ is totally disconnected. Since, by the item (b), $F$ is topologically semiconjugate to $T$, we have that $\omega_F(\mathbf{v})$ is a perfect set. In this way, $\omega_F(\mathbf{v})$ is a Cantor set. This proves the item (c). Let us prove the item (d). Let $0\le t_1<t_2\cdots$ be the switching times. Let $\mathbf{v}(t_k)=
\big(v_1(t_k),v_2(t_k),v_3(t_k)\big)$ be the state of the server at the time $t_k$. By (\ref{iff2-38}), we have that  $l(t_k)=1$ (i.e. the server is connected to the tank $1$) if and only if $\mathbf{v}(t_k)\in  [\mathbf{r}_2,\mathbf{e}_1] \cup [\mathbf{e}_1,\mathbf{r}_3)$.  Since $F$ is topologically semiconjugate to $T$, this translates into interval dynamics as follows: $l(t_k)=1$ if and only if $w_k\in [\lambda_1,\lambda_1+\lambda_2+\lambda_3)$, where $w_k=h(\mathbf{v}(t_k))$ is the projection of $\mathbf{v}(t_k)$ by the topological semiconjugacy $h$. In this way, since $T$ is uniquely ergodic, the normalized Lebesgue measure $\mu$ in the only $T$-invariant Borel probability measure, then by the version of Birkhoff's Ergodic Theorem for uniquely ergodic transformations (see \cite[Proposition 4.1.13]{AK1980}), we reach
\begin{eqnarray*}
\textrm{freq}\,(1) &=&\lim_{n\to\infty}\frac1n\#\{1\le k\le n:l(t_k)=1\}\\&=&\lim_{n\to\infty}\frac1n\#\{1\le k\le n: w_k\in [\lambda_1,\lambda_1+\lambda_2+\lambda_3)\}\\&=&\lim_{n\to\infty}\frac1n\#\{1\le k\le n: T^{k-1}(w_1)\in [\lambda_1,\lambda_1+\lambda_2+\lambda_3)\}
\\ &=&\mu\big([\lambda_1,\lambda_1+\lambda_2+\lambda_3)\big)=\frac{\lambda_3}{\vert\boldsymbol{\lambda}\vert}.
\end{eqnarray*}
Proceeding likewise, one can prove that $\textrm{freq}\,(2)=\dfrac{\lambda_1+\lambda_4}{\vert\boldsymbol{\lambda}\vert}$ and $
\textrm{freq}\,(3)=\dfrac{\lambda_2}{\vert\boldsymbol{\lambda}\vert}$.

\end{proof}

\section*{Appendix}\label{app}

The proofs of the results that demand numerical analysis are provided in this section. In order not to overstretch the discussion,
we skip some details. Since the isometric model $T:I\to I$ is a piecewise-defined map, in order to compute $T^k(x)$, it is necessary to know which of the intervals $I_1,I_2,I_3,I_4$ the point $T^{k-1}(x)$ belongs to. In other words, we need to know the address $i_{k-1}$ determined by the equation $T^{k-1}(x)\in I_{i_{k-1}}$. By recursion, if we know the word
$i_0 i_1 \ldots i_{k-1}$, then we can compute $T^k(x)$ exactly by means of Corollary \ref{xy17}. All we need is to compute
$\{x,T(x),\ldots,T^{k}(x)\}$ for finitely many $x$'s and finitely many $k$'s.

\subsection*{Spectral analysis of the matrix $P$}\rule[-5pt]{0pt}{5pt}\\ 

\indent The characteristic polynomial $p$ of $P$ is the product of polynomials:
$$ p(t)=(t-1)(t^3-11 t^2 + 7 t -1).
$$
Hence, the Perron-Frobenius eigenvalue $\eta$ of $P$ is a root of the irreducible polynomial over $\mathbb{Q}$: $t^3-11 t^2 + 7 t -1$. In particular, $1,\eta$ and $\eta^2$ are rationally independent. Namely, $\eta$ is equal to
$$ \eta=\frac{1}{3} \left(11+\frac{50\cdot\ 2^{2/3}}{\sqrt[3]{499+3 i \sqrt{111}}}+\sqrt[3]{998+6 i
   \sqrt{111}}\right)\cong 10.331851
$$
and the associated probability eigenvector $\boldsymbol{\nu}=(\nu_1,\nu_2,\nu_3,\nu_4)$ is given by
\begin{equation}\label{n29}
\boldsymbol{\nu}=\left(\dfrac{-3\eta^2+32\eta-9}{4},\dfrac{5\eta^2-54\eta+25}{4},\dfrac{\eta^2-10\eta-3}{4},\dfrac{-3\eta^2+32\eta-9}{4}\right),
\end{equation}
which is, approximately, equal to
$$  \boldsymbol{\nu}\cong (0.344446,\, 0.203947,\, 0.107159,\, 0.344446).
$$
The vector $\boldsymbol{\lambda}=(\lambda_1,\lambda_2,\lambda_3,\lambda_4)=Q\boldsymbol{\nu}$ is given by
\begin{equation}\label{valuel}
\boldsymbol{\lambda}=\left(\dfrac{-3\eta^2+32\eta-9}{4},\dfrac{6\eta^2-64\eta+22}{4},\dfrac{-2\eta^2+22\eta-12}{4},\dfrac{5\eta^2-54\eta+25}{4}\right),
\end{equation}
which is, approximately, equal to
$$  \boldsymbol{\lambda}\cong(0.344446,\, 0.3111078,\, 0.4516059,\, 0.203947).
$$
Notice that $\vert\boldsymbol{\lambda}\vert\cong 1.311107$. 
\begin{proof}[Proof of Lemma \ref{adi}]
Let $I_1,I_2, I_3,I_4$ be the partition of $[0,\vert\boldsymbol{\lambda}\vert]$ defined by
$$ I_1=[x_0,x_1),\quad I_2=[x_1,x_2), \quad I_3=[x_2,x_3), \quad I_4=[x_3,x_4],
$$
where
$$ x_0=0,\quad x_1=\lambda_1,\quad x_2=\lambda_1+\lambda_2,\quad x_3=\lambda_1+\lambda_2+\lambda_3,\quad x_4=\lambda_1+\lambda_2+\lambda_3+\lambda_4=\vert\boldsymbol{\lambda}\vert.
$$
Then $$
\begin{cases} I_1\cong [0,\, 0.344446) \\ I_2\cong [0.344446\,,0.655553) \\ I_3\cong [0.655553,\,1.107159) \\ I_4\cong [1.107159,\,1.311107].
\end{cases}
$$

Let $$ x_0'=0,\quad x_1'=\nu_1, \quad x_2'=\nu_1+\nu_2, \quad x_3'=\nu_1+\nu_2+\nu_3,\quad x_4'=1.$$

By using the equality $\boldsymbol{\lambda}=Q\boldsymbol{\nu}$, by (\ref{formulaT}) and some numerical analysis, we reach Table \ref{tab10}. 
\def\arraystretch{1.3}
\setlength{\tabcolsep}{5pt}
\captionsetup[table]{skip=10pt}
\captionsetup[table]{belowskip=0pt}
\begin{table}[ht!]
\begin{tabular}{ |c|l|l|l|c|}	\hline	
  $i$ & $x_i'$ &  $\big\{T^k(x_i'):0\le k\le N(x_i')-1\big\}$ & $T^{N(x_i')}(x_i')$ & $N(x_i')$ \\  \hline	
   0  & 0 &  $0$ & $0.796052\ldots$ & $1$  \\ \hline
  1 & $\nu_1$ &  $x_1\phantom{aaaaa}$ \quad $1.311107\ldots$ & $0.796052\ldots$  & $2$ \\ \hline
 2 &  $\nu_1+\nu_2$ & $0.548394\ldots$ \quad $x_3$ & $x_4'$ & $2$ \\ \hline
  3 & $\nu_1+\nu_2+\nu_3$ & $x_2$ & $0.451606\ldots$  & $1$ \\\hline
  4 &$1$ &  $1$  & $0.107159\ldots$ & $1$ \\ \hline
 \end{tabular}\caption{}\label{tab10}
\end{table}

 Table \ref{tab10} shows that 
  (H1)-(H2) in Definition \ref{admint} are satisfied for $B=\bigcup_{i=1}^{4} \left\{x_i',T(x'_i),\ldots,T^{N(x'_i)-1}(x'_i)\right\}$ and $a'=x_4'=1$. In fact,
$\mathcal{D}(T)=\{x_1,x_2,x_3\}\subset B$ and $a'\in T(B)$.
Hence, $I'$ is an admissible interval for $T$. By Proposition \ref{277}, for each $1\le i\le 4$, there exist $r_i\ge 1$ and a word $i_0i_1\ldots i_{{r_i}-1}$ over the alphabet $\mathcal{A}=\{1,2,3,4\}$ such that (A1)-(A4) are true. In particular, we have that
$r_i=N(c_i)$, where $c_i=(x_{i-1}'+x_i')/2$. The values of $r_i$ and $i_0 i_1\cdots i_{r_{i}-1}$ are given in Table \ref{tab11}.
\setlength{\tabcolsep}{5pt}
\captionsetup[table]{skip=10pt}
\captionsetup[table]{belowskip=0pt}
\begin{table}[ht!]
\begin{tabular}{ |c|l|l|c|c|c|}	\hline	
  $i$ & $c_i=(x_{i-1}'+x_i')/2$ &  $\big\{T^k(c_i):0\le k\le r_i-1\big\}$ & $T^{r_i}(c_i)$ & $N(c_i)$ & $i_0 i_1\ldots i_{r_i-1}$ \\  \hline	
   1 & $0.172223\ldots$ &  $0.172223\ldots$ & $0.623829\ldots$  & $1$ & $1$ \\ \hline
 2 &  $0.4464201\ldots$ & $0.446420\ldots$ \quad $1.209134\ldots$ & $0.898026\ldots$ & $2$ & $24$ \\ \hline
  3 & $0.601974\ldots$ & $0.601974\ldots$ \quad  $1.053579\ldots$ & $0.053579\ldots$  & $2$ & $23$ \\\hline
  4 &$0.827777\ldots$ &  $0.827777\ldots$  & $0.2793829\ldots$ & $1$ & $3$ \\ \hline
 \end{tabular}\caption{}\label{tab11}
\end{table}
By Corollary \ref{xy17}, Table \ref{tab11} and the equality $\boldsymbol{\lambda}=Q\boldsymbol{\nu}$, we have that the Poincar\'e map $T'$ of $T$ on $I'=[0,1]$ is given by
\begin{equation*}
T'(x)=\begin{cases}
-x +\lambda_1+\lambda_3=-x+\nu_1+\nu_3+\nu_4=-x-\nu_2+1 & \textrm{if}\quad x\in [x_{0}',x_1')\\
\phantom{-} x + \lambda_3 = x + \nu_3 + \nu_4 = x -\nu_1-\nu_2+1 & \textrm{if}\quad x\in [x_{1}',x_2']\\
\phantom{-} x +\lambda_2+\lambda_3-\vert\boldsymbol{\lambda}\vert = x-\lambda_1-\lambda_4 = x-\nu_1-\nu_2 & \textrm{if}\quad x\in (x_{2}',x_3')\\
-x+\lambda_1+\lambda_2+\lambda_3=-x+\nu_1+\nu_2+2\nu_3+\nu_4=-x+\nu_3+1 & \textrm{if}\quad x\in [x_{3}',x_4']
\end{cases}.
\end{equation*}

This concludes the proof of Lemma \ref{adi}.  
\end{proof}
\begin{proof}[Proof of Lemma \ref{rl2}] It suffices to verify the hypotheses of Corollary \ref{48}. By Lemma \ref{adi}, $I'$ is an admissible interval for $T$. Moreover, by the $N(c_i)$-column in Table \ref{tab11} and by the equality  $\boldsymbol{\lambda}=Q\boldsymbol{\nu}$, we reach for $J_i=(x_{i-1}',x_i')$,
$$ \sum_{i=1}^4 r_i \vert J_i\vert=\sum_{i=1}^4 r_i (x_{i}'-x_{i-1}')=\sum_{i=1}^4 r_i\nu_i=\nu_1 +2 \nu_2 + 2 \nu_3+\nu_4=\lambda_1+\lambda_2+\lambda_3+\lambda_4=\vert \boldsymbol{\lambda}\vert,$$
which shows that (H3) is true.
 \end{proof}
  \begin{proof}[Proof of Lemma \ref{aab}]
  The proof consists in verifying the hypotheses (H1)-(H2) in Definition \ref{admint} considering the map $S:[0,1]\to [0,1]$, defined in Subsection \ref{subS}, and the interval $I'=\left[0,\frac{1}{\eta}\right]\cong[0,\,0.096788]$. Notice that $\mathcal{D}(S)=\{y_1,y_2,y_3\}$, where 
$$ y_0=0,\,\, y_1=\nu_1\cong 0.344446, \,\, y_2=\nu_1+\nu_2\cong 0.548394, \,\, y_3=\nu_1+\nu_2+\nu_3=0.655553,\,\, y_4=1.
$$
 Let
 $$ y_0'=0, \quad y_1'=\frac{1}{\eta} y_1,\quad y_2'=\frac{1}{\eta} y_2,\quad y_3'=\frac{1}{\eta} y_3,\quad y_4'=\frac{1}{\eta}.
 $$
 By using the equality $P\boldsymbol{\nu}=\eta\boldsymbol{\nu}$ and some numerical analysis, we reach Table \ref{tab12}. Table \ref{tab12} shows that 
  (H1)-(H2) in Definition \ref{admint} are satisfied for $B=\bigcup_{i=1}^{4} \left\{y_i',S(y'_i),\ldots,S^{N(y'_i)-1}(y'_i)\right\}$ and $a'=y_4'=\frac{1}{\eta}$. In fact,
$\mathcal{D}(T)=\{y_1,y_2,y_3\}\subset B$ and $a'\in S(B)$.
Hence, $I'=\left[0,\frac{1}{\eta}\right]$ is an admissible interval for $S$, which concludes the proof.
\end{proof}
\def\arraystretch{1.3}
\setlength{\tabcolsep}{5pt}
\captionsetup[table]{skip=10pt}
\captionsetup[table]{belowskip=0pt}
\begin{table}[ht!]
\begin{tabular}{ |c|l|l|l|c|}	\hline	
  $i$ & $y_i'$ &  $\big\{S^k(y_i'):0\le k\le N(y_i')-1\big\}$ & $S^{N(y_i')}(y_i')$ & $N(y_i')$ \\  \hline	
   $0$  & $0$ & \begin{minipage}{95mm} ~\\ $0\phantom{.000000\ldots}$ \, $0.796052\ldots$\,\,\, $0.311107\ldots$\,\, $0.484944\ldots$ \\ $0.936550\ldots$ \, $0.170609\ldots$ \,  $0.625442\ldots$ \\ \end{minipage}
  & $0.077048\ldots$ & $7$  \\ \hline
  $1$ & $\dfrac{\nu_1}{\eta}$ &  \begin{minipage}{95mm} ~\\ $0.033338\ldots$ \, $0.762713\ldots$ \, $y_1\phantom{344446\ldots}$  \, $0.796052\ldots$ \\ $0.311107\ldots$ \, $0.484944\ldots$ \, $0.936550\ldots$ \, $0.170609\ldots$ \\ $0.625442\ldots$ \, \\ 
   \end{minipage} & $0.077048\ldots$  & $9$ \\ \hline
 $2$ &  $\dfrac{\nu_1+\nu_2}{\eta}$ & \begin{minipage}{95mm} ~\\ $0.053078\ldots$ \, $0.742974\ldots$ \, $0.364185\ldots$ \, $0.815791\ldots$ \\ $0.291368\ldots$ \, $0.504683\ldots$ \, $0.956289\ldots$ \, $0.150869\ldots$ \\ $0.645182\ldots$ \\\end{minipage} & $y_4'$ & $9$ \\ \hline
  $3$ & $\dfrac{\nu_1+\nu_2+\nu_3}{\eta}$ & \begin{minipage}{95mm} ~\\ $0.063449\ldots$\, $0.732602\ldots$ \, $0.374557\ldots$ \, $0.826163\ldots$ \\ $0.280996\ldots$ \, $0.515055\ldots$ \, $0.966661\ldots$ \, $0.140498\ldots$ \\ $y_3\phantom{655553\ldots}$ \, $0.451605\ldots$ \, $0.903211\ldots$\,
 $0.203947$ \\ $0.592104\ldots$ \\
   \end{minipage} & $0.043710\ldots$  & $13$ \\\hline
  $4$ &$\dfrac{1}{\eta}$ &  \begin{minipage}{95mm} ~\\ $0.096788\ldots$ \, $0.699263\ldots$ \, $0.407895\ldots$ \, $0.859501\ldots$ \\ $0.247658\ldots$ \, $y_2\phantom{599394\ldots}$ \, $1\phantom{000000\ldots}$ \,  $0.107159\ldots$ \\ $0.688892\ldots$ \, $0.418267\ldots$ \, $0.869873\ldots$ \,
$0.237286\ldots$ \\ $0.558765\ldots$ \\
   \end{minipage}  &   $0.010371\ldots$ & $13$ \\ \hline
 \end{tabular}\caption{}\label{tab12}
\end{table}
\begin{proof}[Proof of Lemma \ref{aac}] By Lemma \ref{aab} and Proposition \ref{277}, for each $1\le i\le 4$, there exist $r_i\ge 1$ and a word $i_0i_1\ldots i_{{r_i}-1}$ over the alphabet $\mathcal{A}=\{1,2,3,4\}$ such that (A1)-(A4) are true. In particular, we have that
$r_i=N(c_i)$, where $c_i=(y_{i-1}'+y_i')/2$. The iterates $S^k(c_i)$  are shown in Table \ref{tab13}. 
 The values of $r_i$ and $i_0 i_1\cdots i_{r_{i}-1}$ are given in Table \ref{tab14}. By Corollary \ref{xy17}, Table \ref{tab14} and the equality $\boldsymbol{\lambda}=Q\boldsymbol{\nu}$, we have that
\setlength{\tabcolsep}{5pt}
\captionsetup[table]{skip=10pt}
\def\arraystretch{2}
\captionsetup[table]{belowskip=0pt}
\begin{table}[ht!]
\begin{tabular}{ |c|l|l|c|c|}	\hline	
  $i$ & $c_i=\dfrac{(y_{i-1}'+y_i')}{2}$ &  $\big\{T^k(c_i):0\le k\le r_i-1\big\}$ & $T^{r_i}(c_i)$ & $N(c_i)$  \\  \hline	
   1 & $0.016669\ldots$ &  \begin{minipage}{95mm} ~\\ $0.016669\ldots$\, $0.779382\ldots$ \, $0.327776\ldots$ \, $0.468275\ldots$ \\ $0.919881\ldots$ \, $0.187278\ldots$ \, $0.608773\ldots$ \\ \end{minipage} & $0.060379\ldots$  & $7$   \\ \hline
 2 &  $0.043208\ldots$ &  \begin{minipage}{95mm} ~\\ $0.043208\ldots$ \, $0.752843\ldots$ \, $0.354315\ldots$ \, $0.805921\ldots$ \\ $0.301237\ldots$ \, $0.494814\ldots$ \, $0.946420\ldots$ \,  $0.160739\ldots$ \\ $0.635312\ldots$  \\
   \end{minipage}  & $$0.086918\ldots$$ & $9$   \\ \hline
  3 & $0.0582639\ldots$ & \begin{minipage}{95mm} ~\\ $0.058263\ldots$ \, $0.737788\ldots$ \, $0.369371\ldots$ \,$0.820977\ldots$ \\ $0.286182\ldots$ \, $0.509869\ldots$\, $0.961475\ldots$ \, $0.145684\ldots$ \\ $0.650368\ldots$ \, $0.101973\ldots$  \, $0.694078\ldots$\, $0.413081\ldots$ \\ $0.864687\ldots$ \, $ 0.242472\ldots$ \, $0.553579\ldots$ \\
   \end{minipage} & $0.005185\ldots$  & $15$   \\\hline
  4 &$0.08011894\ldots$ &  \begin{minipage}{95mm} ~\\ $0.080118\ldots$ \, $0.715933\ldots$ \, $0.391226\ldots$ \, $ 0.842832\ldots$ \\ $0.264327\ldots$ \, $0.531724\ldots$ \, $0.983330\ldots$ \, $0.123829\ldots$ \\ $0.672223\ldots$ \, $0.434936\ldots$  \, $0.886542\ldots$ \,
 $0.220617\ldots$\\ $0.575434\ldots$ \\
   \end{minipage}   & $0.027040\ldots$ & $13$   \\ \hline
 \end{tabular}\caption{}\label{tab13}
\end{table}
the Poincar\'e map $S'$ of $S$ on  $\left[0,\frac{1}{\eta} \right]$ is given by
\begin{equation}\label{S'}
S'(x)=\begin{cases}
-x +2 - 2 \nu_1 - 5 \nu_2 - 2 \nu_3 & \textrm{if}\quad x\in (y_{0}',y_1')\\
\phantom{-}x+2 - 3 \nu_1 - 4 \nu_2 - \nu_3 & \textrm{if}\quad x\in (y_{1}',y_2')\\
\phantom{-}x+ 3 - 5 \nu_1 - 6 \nu_2 - \nu_3  & \textrm{if}\quad x\in (y_{2}',y_3')\\
-x+\nu_3 & \textrm{if}\quad x\in (y_{3}',y_4')
\end{cases}.
\end{equation}
By (\ref{S'}) and by the equality $\frac{1}{\eta}\boldsymbol{\nu}=P^{-1}\boldsymbol{\nu}$, it follows that $S'=L\circ S\circ L^{-1}$ on $I'{\setminus}\{y_0',\ldots,y_4'\}$, proving that $S$ in fact self-similar on $\left[0,\frac{1}{\eta}\right]$. This concludes the proof of Lemma \ref{aac}.
 \def\arraystretch{1.3}
\setlength{\tabcolsep}{5pt}
\captionsetup[table]{skip=10pt}
\captionsetup[table]{belowskip=0pt}
\begin{table}[ht!]
\begin{tabular}{ |c|c|l|}	\hline	
  $i$ & $r_i=N(c_i)$ & $i_0i_1\ldots i_{r_i-1}$ \\  \hline	
  1 &7 & 1\,4\,1\,2\,4\,1\,3   \\ \hline
 2 &  9 & 1\,4\,2\,4\,1\,2\,4\,1\,3   \\ \hline
  3 & 15 & 1\,4\,2\,4\,1\,2\,4\,1\,3\,1\,4\,2\,4\,1\,3  \\\hline
  4 &13 & 1\,4\,2\,4\,1\,2\,4\,1\,4\,2\,4\,1\,3 \\ \hline
 \end{tabular}\caption{}\label{tab14}
\end{table} 
 \end{proof}
 \begin{proof}[Proof of Lemma \ref{Sistt}] It suffices to verify the hypotheses of Corollary \ref{332}. By Lemma \ref{aab}, $\left[0,\frac{1}{\eta}\right]$ is an admissible interval for $S$. By Lemma \ref{aac}, $S$ is self-similar on $\left[0,\frac{1}{\eta}\right]$. Let $p_{ij}$ denote the $i,j$-entry of the matrix $P$. By the $N(c_i)$-column in Table \ref{tab14} and by the equality  $P\boldsymbol{\nu}=\eta\boldsymbol{\nu}$, we reach for $J_i=(y_{i-1}',y_i')$,
$$ \sum_{i=1}^4 r_i \vert J_i\vert=\sum_{i=1}^4 r_i (y_{i}'-y_{i-1}')=\sum_{i=1}^4 r_i\frac{\nu_i}{\eta}=\frac{1}{\eta}(7\nu_1 +9 \nu_2 + 15 \nu_3+ 13\nu_4)=\frac{1}{\eta}\sum_{j=1}^4\sum_{i=1}^4 p_{ij} \nu_i=\frac{1}{\eta}\sum_{i=1}^n \eta\nu_i=1,$$
which shows that (H3) is true.
Applying (\ref{mabc}) to the  third column in Table \ref{tab14} yields $M=P$, where $M$ is the matrix associated with $\left(S,\left[0,\frac{1}{\eta}\right]\right)$. Hence, $M$ is positive and (H4) holds. By Corollary \ref{332}, $S$ is topologically transitive. By Lemma \ref{rl2}, $T$ is topologically transitive.
\end{proof}

\noindent\textbf{Acknowledgments}. F. Fernandes was financed  by the Coordena\c c\~ao de Aperfei\c coamento de Pessoal de N\'ivel Superior - Brasil (CAPES) - Finance Code 001. B. Pires was partially supported by grant {\#}2018/06916-0, S\~ao Paulo Research Foundation (FAPESP) and by the National Council for Scientific and Technological Development (CNPq).

\end{document}